\author{Casey Rodriguez}
\address{Department of Mathematics\\
  University of Chicago\\
  5734 S. University Avenue \\
  Chicago, IL, 60637}
\email{c-rod216@math.uchicago.edu}
\numberwithin{equation}{section}
\DeclareMathOperator{\arcsinh}{arcsinh}
\newcommand{\R}{\mathbb R}
\newcommand{\N}{\mathbb N}
\newcommand{\Z}{\mathbb Z}
\newcommand{\lims}{\varlimsup}
\newcommand{\M}{\mathcal M}
\newcommand{\s}{\mathbb S}
\newcommand{\vphi}{\varphi}
\newcommand{\ra}{\rangle}
\newcommand{\la}{\langle}
\newcommand{\rar}{\rightarrow}
\newcommand{\p}{\partial}
\newcommand{\al}{\alpha}
\newcommand{\de}{\delta}
\newcommand{\e}{\epsilon}
\newcommand{\tht}{\theta}
\newcommand{\lam}{\lambda}
\newcommand{\h}{\mathcal H}
\newcommand{\cl}{\mathcal}
\newcommand{\tk}{\tilde k}
\newtheorem{lem}{Lemma}[section]
\newtheorem{thm}[lem]{Theorem}
\newtheorem{ppn}[lem]{Proposition}
\newtheorem{defn}[lem]{Definition}
\newtheorem{cor}[lem]{Corollary}
\title{Soliton Resolution for Equivariant \\ Wave Maps on a Wormhole: II}
\begin{document}

\begin{abstract}
In this paper, we continue our study of equivariant \emph{wave maps on a wormhole} initiated in \cite{cpr}.  
More precisely, we study finite energy $\ell$--equivariant wave maps from the $(1+3)$--dimensional spacetime $\R \times
(\R \times \s^2) \rar \s^3$ where the metric on $\R \times (\R \times \s^2)$ is given by  
\begin{align*} 
ds^2 = -dt^2 + dr^2 + (r^2 + 1) \left ( d \theta^2 + \sin^2 \theta d \varphi^2 \right ), \quad t,r \in \R, 
(\theta,\varphi) \in \s^2. 
\end{align*}
The constant time slices are each given by a Riemannian manifold $\cl M$ with 
two asymptotically Euclidean ends at $r = \pm \infty$ that
are connected by a 2--sphere at $r = 0$.  The spacetime $\R \times (\R \times \s^2)$ has appeared in the general relativity 
literature as a prototype wormhole geometry (but is not expected to exist in nature).
Each $\ell$--equivariant finite energy wave map can be indexed by its topological degree $n$.  For each $\ell$ and $n$, there exists 
a unique, linearly stable energy minimizing $\ell$--equivariant harmonic map $Q_{\ell,n} : \cl M \rightarrow \s^3$ of degree $n$.  In this work, we prove the soliton 
resolution conjecture for this model.  More precisely, we show that modulo a free 
radiation term every $\ell$--equivariant wave map of degree $n$ converges strongly to $Q_{\ell,n}$. This fully resolves 
a conjecture made by Bizon and Kahl in \cite{biz2}. Our previous 
work \cite{cpr} showed this for the corotational case $\ell = 1$ and established many preliminary results that are used in the 
current work.
\end{abstract}

\maketitle




\section{Introduction}

Fundamental geometric objects which have been intensely studied in recent years
are \emph{wave maps}.  These maps satisfy a \lq manifold valued wave equation' and can be seen as geometric generalizations
of solutions to the standard linear wave equation on flat Minkowski space.  Wave maps are defined as follows.  
Let $(M,g)$ be a $(1+d)$--dimensional Lorentzian spacetime, and let $(N,h)$ be a Riemannian manifold.  A wave map
$U : M \rar N$ is defined to 
be a formal critical point of the action functional 
\begin{align}\label{s01}
\cl S(U,\p U) = \frac{1}{2} \int_M g^{\mu \nu} \la \p_\mu U, \p_\nu U \ra_h dg.
\end{align}
As critical points of this action, a wave map $U$ is satisfies the Euler--Lagrange equations associated to $\cl S$ which in local coordinates are 
\begin{align}\label{s02}
\Box_g U^i + \Gamma^i_{jk}(U)  \p_{\mu} U^j \p_{\nu} U^k g^{\mu \nu} = 0, 
\end{align}
where $\Box_g := \frac{1}{\sqrt{-g}} \p_\mu(g^{\mu \nu} \sqrt{-g} \p_\nu)$ is the D'Alembertian for the 
background metric $g$ and $\Gamma^i_{jk}$ are the Christoffel symbols for the target metric $h$.  The system 
\eqref{s02} is referred to as the \emph{wave map system} or as simply the \emph{wave map equation}.  Note that if 
$(M,g) = (\R^{1+d}, \eta)$, flat Minkowski space, and $N = \R$ then from \eqref{s02} we see that wave maps are simply solutions to the free wave equation 
on $\R^{1+d}$.  

The most studied setting of wave maps has been when $(M,g) = (\R^{1+d}, \eta)$ and $(N,h)$ is a 
$d$--dimensional Riemannian manifold (see the classical reference 
\cite{shat} and the recent review \cite{sch}).  Wave maps are treated as solutions to the initial value problem for \eqref{s02}. 
It is known that solutions starting from small initial data (within a certain smoothness space) are global and behave, 
in a sense, like solutions to the free wave equation on Minkowski space.  Recently, researchers have turned to the problem 
of describing the long time dynamics of generic large data solutions. The guiding principle is the so called 
\emph{soliton resolution conjecture}.  This belief asserts that for most nonlinear dispersive PDE, 
generic globally defined solutions asymptotically decouple into a superposition of nonlinear bulk terms (traveling waves, 
rescaled solitons, etc.) and radiation (a solution to the underlying linear equation).  However, when trying to verify this 
conjecture for wave maps on Minkowski space, complications arise due to the scaling symmetry:
\begin{align*}
U(t,x) \mbox{ solves \eqref{s02}} \implies U_{\lam}(t,x) = U(\lam t, \lam x) \mbox{ solves \eqref{s02}.}
\end{align*}
Due to this scaling symmetry, the long--time dynamics of large data wave maps on $\R^{1+d}$ can be very complex and one 
can have (depending on the geometry of the target and dimension) self--similar solutions, finite time break down via energy concentration, dynamic \lq towers' of solitons, and 
other interesting scenarios.  Therefore, to gain better insight on the role of the soliton resolution 
conjecture it is instructive to consider models when the background metric does not admit such a scaling symmetry.  Moreover, the case of a curved background metric is still relatively unexplored.  These reasons motivated
the following model introduced by Bizon and Kahl \cite{biz2} which we consider in this paper.     

In this work, we continue our study of so called equivariant \emph{wave maps on a wormhole} initiated in \cite{cpr}.  The 
setup is the following.  We consider wave maps $U : \R \times (\R \times \s^2) \rar \s^3$ where the background metric is 
given by 
\begin{align}\label{s02b}
ds^2 = -dt^2 + dr^2 + (r^2 + 1)(d\theta^2 + \sin^2 \tht d\vphi^2 ), \quad t,r \in \R, (\theta,\varphi) \in \s^2.
\end{align}  
Each constant time slice is given by the Riemannian manifold $\cl M := \R \times \s^2$ with metric 
\begin{align}\label{s02c}
ds^2 = dr^2 + (r^2 + 1)(d\theta^2 + \sin^2 \tht d\vphi^2 ), \quad r \in \R, (\theta,\varphi) \in \s^2.
\end{align}
Since $r^2 + 1 \approx r^2$ for large $r$, $\cl M$ has two asymptotically Euclidean ends connected by a 2--sphere 
at $r = 0$ (the throat).  Because of this, the above spacetime has appeared as a prototype `wormhole' geometry in the 
general relativity literature since its introduction by Ellis in the 1970's and popularization by Morris and 
Thorne in the 1980's (see \cite{mt}, \cite{fjtt} and the references therein).  Due to the rotational symmetry of the background and target, it is natural to consider a subclass of 
wave maps $U : \R \times (\R \times \s^2) \rar \s^3$ such that  
\begin{align}\label{s02d}
\exists \ell \in \N, \quad U \circ \rho = \rho^\ell \circ U, \quad \forall \rho \in SO(3).  
\end{align}
Here, the rotation group $SO(3)$ acts on the background and target in the natural way.  The integer 
$\ell$ is commonly referred to as the \emph{equivariance class} and can be thought of as parametrizing a 
fixed amount of angular momentum for the wave map.  If we fix spherical coordinates
$(\psi, \vartheta, \phi)$ on $\s^3$, then from \eqref{s02d} it follows that $U$ is completely determined by the associated 
function $\psi = \psi(t,r)$ and the wave map equation \eqref{s02} reduces to the single scalar semilinear wave equation 
for $\psi$: 
\begin{align}
\begin{split}\label{s04}
&\p_t^2 \psi - \p_r^2 \psi - \frac{2r}{r^2 + 1} \p_r \psi + \frac{\ell(\ell+1)}{2(r^2 + 1)} \sin 2 \psi = 0, \quad (t,r) \in \R \times \R,\\
&\vec \psi(0) = (\psi_0,\psi_1).
\end{split}
\end{align}
Throughout this work we use the notation $\vec \psi(t) = (\psi(t,\cdot), \p_t \psi(t,\cdot))$.  Solutions $\psi$ to \eqref{s04}
will be referred to as $\ell$--equivariant \emph{wave maps on a wormhole}. The equation \eqref{s04} has the following conserved energy along the flow:
\begin{align*}
\mathcal E_\ell(\vec \psi(t)) := \frac{1}{2} \int_\R \left [ |\p_t \psi(t,r)|^2 + |\p_r \psi(t,r)|^2 + 
\frac{\ell(\ell+1)}{r^2 + 1} \sin^2 \psi(t,r)  \right ] (r^2 + 1) dr = \mathcal E_\ell(\vec \psi(0)). 
\end{align*}
In order for the initial data to have finite energy, we must have for some $m,n \in \Z$, 
\begin{align*}
\psi_0(-\infty) = m\pi \quad \mbox{and} \quad \psi_0(\infty) = n\pi. 
\end{align*}
For a finite energy solution $\vec \psi(t)$ to \eqref{s04} to depend continuously on $t$, we must have that $\psi(t,-\infty) = m
\pi$ and $\psi(t,\infty) = n\pi$ for all $t$.  Due to the symmetries $\psi \mapsto m\pi + \psi$ and 
$\psi \mapsto -\psi$ of 
\eqref{s04}, we will, without loss of generality, fix $m = 0$ and assume 
$n \in \N \cup \{0\}$.  Thus, we only consider wave maps which send the left Euclidean end at $r = -\infty$ to the 
north pole of $\s^3$.  The integer $n$ is referred to as the topological degree of the map $\psi$ and, heuristically, 
represents the minimal number of times $\M$ gets wrapped around $\s^3$ by $\psi$.  For each $n \in \N \cup \{0\}$, we denote the 
set of finite energy pairs of degree $n$ by 
\begin{align*}
\mathcal E_{\ell,n} := \left \{ (\psi_0,\psi_1) : \mathcal E_\ell(\psi_0,\psi_1) < \infty, \quad \psi_0(-\infty) = 0, \quad \psi_0(\infty) = n\pi
\right \}.
\end{align*}

As described in \cite{biz2} and in our earlier work \cite{cpr}, there are features of the wave maps on a wormhole equation that make it an attractive model in which to study the soliton resolution conjecture.  The first feature is that we have global well--posedness for arbitrary solutions to \eqref{s04} trivially.  The geometry of the wormhole breaks the scaling invariance that the equation has in the flat case and removes the singularity at the origin.  By a simple contraction argument, conservation of energy and time stepping we easily deduce that every solution to \eqref{s04} is globally defined in time (see Section 3 for more details).  Another feature of \eqref{s04} is that there is an abundance of static solutions to \eqref{s04}.  Such solutions are more commonly referred to as harmonic maps.  More precisely, it can be shown that for every $\ell \in \N$, $n \in \N \cup \{0\}$ there exists a unique solution $Q_{\ell,n}
\in \cl E_{\ell,n}$ to 
\begin{align}\label{s05}
Q'' + \frac{2r}{r^2 + 1} Q' - \frac{\ell(\ell+1)}{2(r^2 + 1)} \sin 2 Q = 0, \quad r \in \R.  
\end{align}
See Section 2 for more details.  

In \cite{biz2} the authors gave mixed numerical and analytic evidence for the following formulation of the soliton resolution conjecture for this model:  for every $\ell \in \N$, $n \in \N \cup \{0\}$, and 
for any $(\psi_0,\psi_1) \in \cl E_{\ell,n}$ there exist a unique global solution $\psi$ to \eqref{s04}
and solutions $\varphi^{\pm}_L$ to the linearized equation 
\begin{align}\label{s06a}
\p_t^2 \varphi - \p_r^2 \varphi - \frac{2r}{r^2 + 1} \p_r \varphi + \frac{\ell(\ell+1)}{r^2 + 1} \varphi = 0, 
\end{align}
such that 
\begin{align*}
\vec \psi(t) = (Q_{\ell,n}, 0) + \vec \varphi^{\pm}_L(t) + o(1), 
\end{align*}
as $t \rar \pm \infty$.  In our earlier work \cite{cpr}, we verified this conjecture in the so called \emph{corotational} case
$\ell = 1$.  In this work, we verify this conjecture for all equivariance classes. 

We note here that a model with features similar to wave maps on a wormhole was previously studied in 
\cite{ls}, \cite{kls1} and \cite{klls2} which served as further motivation and as a road map for the work carried out here.  In these works, the
authors studied $\ell$--equivariant wave maps $U: \R \times (\R \backslash B(0,1)) \rar \s^3$.  In their work, an $\ell$--equivariant 
wave map $U$ is 
determined by the associated azimuth angle $\psi(t,r)$ which satisfies the equation 
\begin{align}
\begin{split}\label{s05a}
&\p_t^2 \psi - \p_r^2 \psi - \frac{2}{r} \p_r \psi + \frac{\ell(\ell+1)}{2(r^2 + 1)}\sin 2 \psi = 0, \quad t \in \
\R, r \geq 1,\\
&\psi(t,1) = 0, \quad \psi(t,\infty) = n \pi, \quad \forall t.
\end{split}
\end{align}
Such wave maps were called $\ell$--equivariant \emph{exterior wave maps}.  Similar to wave maps on a wormhole, global well--posedness
and an abundance of harmonic maps hold for the exterior wave map equation \eqref{s05a}.  In the works 
\cite{ls}, \cite{kls1}, and \cite{klls2}, the authors proved the soliton resolution conjecture for $\ell$--equivariant exterior
wave maps for arbitrary $\ell \geq 1$.  We point out that the geometry of the background $\R \times (\R \backslash B(0,1))$
is still flat and could be considered as artificially removing the scaling symmetry present in the flat case. On the other hand,
the curved geometry of the background considered in this
work is what removes scaling invariance.  This makes wave maps on a wormhole more geometric in nature
while still retaining the properties that make them attractive for studying the soliton resolution conjecture.  However, due to the 
asymptotically Euclidean nature of the wormhole geometry, we are able to adapt techniques developed for the flat case
to this curved geometry. 

We now state our main result.  In what follows we use the following notation.  If $r_0 \geq -\infty$ and 
$w(r)$ is a positive continuous function on $[r_0,\infty)$, then we denote
\begin{align*}
\| (\psi_0,\psi_1) \|_{\h([r_0,\infty); w(r)dr)}^2 :=
\int_{r_0}^\infty \left [ |\psi_0(r)|^2 + |\psi_1(r)|^2 dr \right ] w(r) dr. 
\end{align*}
The Hilbert space $\h([r_0,\infty); w(r)dr)$ is then defined to be the completion of the vector space of $C^\infty_0(r_0,\infty)$ pairs 
with respect to the norm $\| \cdot \|_{\h([r_0,\infty); w(r)dr)}$.  Let $\ell \in \N$ be a fixed equivariance class, and let $n \in \N \cup \{0\}$ be a fixed topological degree. 
In the $n=0$ case, the natural space to place the solution $\vec \psi(t)$ to \eqref{s04} in is the \emph{energy space}
$\h_0 := \h((-\infty,\infty); (r^2 + 1)dr)$.  Indeed, it is easy to show that $\| \vec \psi \|_{\mathcal E_{\ell,0}} \simeq
\| \vec \psi \|_{\h_0}$.  For $n \geq 1$, we measure distance relative to $(Q_{\ell,n},0)$ and define $\h_{\ell,n} := \mathcal E_{\ell,n} - (Q_{\ell,n},0)$
with \lq norm'
\begin{align*}
\| \vec \psi \|_{\h_{\ell,n}} := \| \vec \psi - (Q_{\ell,n},0) \|_{\h_0}.
\end{align*}
Note that $\psi(r) - Q_{\ell,n}(r) \rar 0$ as $r \rar \pm \infty$. The main result of this work is the following.  

\begin{thm}\label{t01}
	For all $(\psi_0,\psi_1) \in \mathcal E_{\ell,n}$, there exists a unique global solution $\vec \psi(t) 
	\in C(\R; \h_{\ell,n})$ to \eqref{s04} which scatters forwards and backwards in time to the harmonic map $(Q_{\ell,n},0)$, i.e. there exist
	solutions $\varphi^{\pm}_L$ to the linearized equation \eqref{s06a} such that  
	\begin{align*}
	\vec \psi(t) = (Q_{\ell,n}, 0) + \vec \vphi_L^{\pm}(t) + o_{\h_0}(1),
	\end{align*}
	as $t \rar \pm \infty$. 
\end{thm}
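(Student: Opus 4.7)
The plan is to adapt the concentration-compactness/rigidity scheme of Kenig--Merle, and more specifically the channels-of-energy approach of Duyckaerts--Kenig--Merle, as used for equivariant exterior wave maps in \cite{ls,kls1,klls2} and for the corotational wormhole case $\ell = 1$ in \cite{cpr}.  The first step is to rewrite the problem as a perturbation of $(Q_{\ell,n},0)$: setting $\psi = Q_{\ell,n} + \vphi$, the remainder $\vphi$ satisfies a semilinear equation whose linearization has the time-independent potential
\[ V_{\ell,n}(r) = \frac{\ell(\ell+1)}{r^2+1}\cos(2Q_{\ell,n}(r)). \]
Linear stability of $Q_{\ell,n}$ yields coercivity of the quadratic part of the energy on the space of perturbations, which together with energy conservation and a contraction argument on short time intervals gives global well-posedness in $C(\R;\h_{\ell,n})$ as announced in Section~3.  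Strichartz estimates for \eqref{s06a} and its potential perturbation then supply small-data scattering to $(Q_{\ell,n},0)$.

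Assuming the theorem fails at some threshold energy, I would run the standard concentration-compactness argument.  Using a linear profile decomposition for the free flow \eqref{s06a} on the wormhole---adapted from the Bahouri--G\'erard decomposition to handle the two asymptotically Euclidean ends of $\cl M$ separately from the throat---one extracts a minimal \emph{critical element} $\vec u^*(t)$ whose trajectory $\{\vec u^*(t) - (Q_{\ell,n},0) : t \in \R\}$ is pre-compact in $\h_0$ and which fails to scatter.  Because the wormhole geometry breaks both the scaling and spatial translation symmetries of the flat wave map equation, there are no modulation parameters to track.

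The rigidity step is then to show that any such pre-compact, non-scattering trajectory must coincide with $(Q_{\ell,n},0)$, producing the contradiction.  The key tool is a two-sided exterior energy estimate: every nonzero finite-energy solution of the linearized equation must radiate a nontrivial amount of energy into at least one of the exterior cones $\{r > |t| + R\}$ or $\{r < -|t| - R\}$ as $|t| \rar \infty$.  For the free equation \eqref{s06a} this is obtained by diagonalizing the Schr\"odinger operator
\[ H_\ell := -\p_r^2 - \tfrac{2r}{r^2+1}\p_r + \tfrac{\ell(\ell+1)}{r^2+1} \]
on $\R$ via its distorted Fourier transform, and tracking outgoing versus incoming components separately at the two ends; one then transfers the estimate to the equation linearized around $Q_{\ell,n}$ by a perturbative argument that exploits the absence of $L^2$ eigenvalues and threshold resonances for the stable harmonic map.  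Pre-compactness of the critical trajectory forces the radiated energy to vanish, hence $\vec u^* \equiv (Q_{\ell,n},0)$, contradicting the non-scattering property.

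The main obstacle, and the true new content relative to \cite{cpr}, is the exterior energy estimate at general equivariance $\ell \geq 2$.  In the corotational case $\ell = 1$ one can conjugate \eqref{s06a} to an essentially three-dimensional free wave equation and read off the estimate from finite propagation speed and d'Alembert's formula.  For $\ell \geq 2$ this trick is unavailable; one must instead control the distorted Fourier representation of the half-wave propagator associated with $H_\ell$ on both ends of the wormhole and rule out the possibility that low-frequency or threshold contributions conspire to cancel the outgoing energy simultaneously on $\{r > |t|+R\}$ and $\{r < -|t|-R\}$.  This is analogous to, but technically heavier than, the corresponding estimate established for $\ell \geq 2$ exterior wave maps in \cite{klls2}, and it is where the bulk of the work in the present paper will sit.
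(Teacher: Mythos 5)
Your overall skeleton (perturb off $Q_{\ell,n}$, small--data theory via Strichartz estimates, concentration--compactness producing a critical element with precompact trajectory, then a rigidity theorem) is the same as the paper's. But the rigidity step, which is where essentially all of the work lies, does not go through as you describe. You assert that a two--sided exterior energy estimate for the linearized operator $H_\ell$, proved by distorted Fourier analysis, combined with precompactness ``forces the radiated energy to vanish, hence $\vec u^* \equiv (Q_{\ell,n},0)$.'' No such kernel--free estimate can hold: the equation linearized about $Q_{\ell,n}$ admits non--radiating, non--trivial exterior data coming from the static problem itself, namely the tails $r^{-\ell-1}$ of the one--parameter families $Q^{\pm}_{\alpha}$ of harmonic maps with prescribed asymptotics at a single end (Proposition \ref{pa22}). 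Concretely, after the substitution $\psi = Q_{\ell,n} + \la r \ra^{\ell} u$ the problem becomes a wave equation on a $(2\ell+3)$--dimensional wormhole with an $O(r^{-4})$ potential, and the relevant exterior estimate is the flat odd--dimensional one of \cite{klls1}, which holds only after projecting away from the plane $P(R)$ spanned by $(r^{2i-d},0)$, $(0,r^{2j-d})$ --- a kernel whose dimension grows with $\ell$. Vanishing channels therefore only determine the critical element modulo this finite--dimensional family in exterior cones; it does not yield triviality, and in fact the correct conclusion of the channel argument is that $u$ agrees for $r>0$ (resp. $r<0$) with a possibly non--zero static solution built from $Q^{+}_{\alpha-\alpha_{\ell,n}}$ (resp. $Q^{-}_{\beta}$). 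The contradiction is only reached at the very end by showing $u$ is globally static, so that $\psi$ is a finite--energy degree--$n$ harmonic map, and invoking the uniqueness of $Q_{\ell,n}$.

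Your proposal contains none of the machinery needed to handle this kernel: the paper devotes Section 4 to an induction on the projection coefficients $\lambda_i(t,r), \mu_j(t,r)$ (difference estimates, $\epsilon$--growth bounds, identification and elimination of the limits $\alpha_{\tilde k - P}$, $\beta_{k-P}$, with separate treatment of $\ell$ even and odd), culminating in the asymptotics $r^{d-2}u_e(0,r) = \alpha + O(r^{-2})$ and the matching with the prescribed--asymptotics harmonic maps. Relatedly, your description of the corotational case is inaccurate: in \cite{cpr} the reduction for $\ell=1$ is to dimension $d=5$, not to a $3$d free wave solvable by d'Alembert, and the same kernel issue ($P(R)\neq\{0\}$) is already present there. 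Finally, the ``new content'' of the paper is not a linear exterior estimate for $H_\ell$ (the linear input is imported wholesale from \cite{klls1} and the potential is handled perturbatively outside light cones after truncation); it is the nonlinear channels--of--energy induction just described, together with the higher--dimensional reduction that makes the potential perturbative. As written, your rigidity argument has a genuine gap at the step ``vanishing exterior energy $\Rightarrow$ $\vec u^*\equiv (Q_{\ell,n},0)$,'' and the strategy of proving a kernel--free two--sided estimate for $H_\ell$ is not repairable, since the static families $Q^{\pm}_\alpha$ would violate it.
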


We now give an outline of the proof and the paper.  The proof is a generalization of that for the corotational case 
$\ell = 1$ in \cite{cpr} and draws from the work \cite{klls2}.  For this model, the set up is as follows.  We first note that
the existence and uniqueness 
of the harmonic map $Q_{\ell,n}$ follows nearly verbatim from the arguments in \cite{cpr} for the special case $\ell = 1$ which are classical ODE type arguments.  This is discussed more in Section 2.  In the remainder of Section 2 we give 
an equivalent reformulation of Theorem \ref{t01} which is simpler to work with.  Instead of studying the azimuth angle $\psi$, we 
study the function $u$ defined by the relation $\psi = Q_{\ell,n} + \la r \ra^\ell u$.  A simple computation shows that $u$ satisfies a radial semilinear wave equation on a higher dimensional wormhole $(\M^d,g)$
\begin{align}
\begin{split}\label{s06}
&\p_t^2 u - \Delta_g u + V(r) u = N(r,u), \quad (t,r) \in \R \times \R, \\
&\vec u(0) = (u_0,u_1) \in \h := \dot H^1 \times L^2(M^d).
\end{split}
\end{align}   
Here, $d = 2\ell + 3$ and the potential $V$ and nonlinearity $N$ are explicit with $V$ arising from linearizing \eqref{s04} 
about the harmonic map $Q_{\ell,n}$.  Our main result, Theorem \ref{t01}, is shown to be equivalent to the statement that every solution 
to \eqref{s06} is global and scatters to free waves on $\M^d$ as $t \rar \infty$ (see Theorem \ref{t21} for the precise 
statement).  The remainder of the work is then devoted to proving Theorem \ref{t21} (the `$u$--formulation' of our main 
result).  In particular, we use the concentration--compactness/rigidity method introduced by Kenig and Merle in there work on the energy--critical Schr\"odinger and wave equations 
\cite{km06} \cite{km08}.  The method has three main steps and is by contradiction.  In the first step, we show that solutions to
\eqref{s06} starting from small initial data scatter to free waves as $t \rar \pm \infty$.  In the second step, we then 
show that if our main result fails, then there exists a nonzero solution $u_*$ to \eqref{s06} which doesn't scatter in 
either direction and is, in a certain sense, minimal.  This minimality imposes the following compactness property on $u_*$: the set
\begin{align*}
K = \{ \vec u_*(t) : t \in \R \}
\end{align*} 
is precompact in $\h$.  These two steps are carried out in Section 3.  We remark here that in the work \cite{klls2} the authors 
established these steps by using delicate estimates and arguments developed in \cite{bulut} and \cite{cpr1} for the energy--critical wave equation on flat space in high dimensions. This is done by using a Strauss estimate to reduce the nonlinearity to an energy--critical 
power on $\R^{1 + (2\ell+3)}$.  However, the arguments we give in this work are much simpler and bypass all of this technical machinery by using only basic Strichartz and Strauss estimates (in fact, our argument also applies to the analogous step in the exterior wave map problem).  In the final and most 
difficult step, we establish the following rigidity result: if $u$ solves \eqref{s06} and 
\begin{align*}
K = \{ \vec u(t) : t \in \R \}
\end{align*}
is precompact in $\h$ then $\vec u = (0,0)$.  This step contradicts the second step and we conclude that our main result 
Theorem \ref{t01} holds.  This is proved in Section 4.  In particular, we show that such a solution $u$ must be a 
static solution to \eqref{s06} which implies $\psi = Q_{\ell,n} + \la r \ra\ell u$ is a harmonic map.  By the 
uniqueness of $Q_{\ell,n}$, it follows that $\vec u = (0,0)$ as desired.  The proof that $u$ must be a static 
solution to \eqref{s06} uses channels of energy arguments rooted in \cite{dkm4} which were then generalized and 
used in the works \cite{kls1} \cite{klls2} on exterior wave maps.  These arguments 
focus only on the behavior of solutions in regions exterior to light cones, and this is what allows us to adapt them to our asymptotically 
Euclidean setting.

\textbf{Acknowledgments}: This work was completed during the author’s doctoral studies at the University of Chicago. The author
would like to thank his adviser, Carlos Kenig, for his invaluable patience, guidance and careful reading of the original manuscript.




\section{Harmonic Maps and a Reduction to Higher Dimensions}

For the remainder of the 
paper we fix an equivariance class $\ell \in \N$, topological degree $n \in \N \cup \{0\}$ and study solutions 
to the wave  map on a wormhole equation 
\begin{align}\label{s21}
\begin{split}
&\p_t^2 \psi - \p_r^2 \psi - \frac{2r}{r^2 + 1} \p_r \psi + \frac{\ell(\ell+1)}{2(r^2 + 1)} \sin 2 \psi = 0, \quad (t,r) \in \R \times \R\\
&\psi(t,-\infty) = 0, \quad \psi(t,\infty) = n \pi, \quad \forall t, \\
&\vec \psi(0) = (\psi_0,\psi_1). 
\end{split}
\end{align}
We recall that the energy
\begin{align}\label{s21e}
\mathcal E_\ell(\psi) = \frac{1}{2} \int \left[ |\p_t \psi|^2 + |\p_r \psi|^2 + \frac{\ell(\ell+1)}{r^2 + 1} 
sin^2 \psi \right] (r^2 + 1)dr
\end{align}
is conserved along and the flow, and so, we take initial data $(\psi_0,\psi_1)$ in the metric space
\begin{align*}
\mathcal E_{\ell,n} = \left \{ 
(\psi_0,\psi_1) : \mathcal E_\ell(\psi_0,\psi_1) < \infty, \quad \psi_0(-\infty) = 0, \quad \psi_0(\infty) = n\pi
\right \}.
\end{align*}
In this section we review the theory of static solutions to \eqref{s21} (i.e. harmonic maps) and reduce the study of $\ell$--equivariant 
wave maps on a wormhole to the study of a semilinear wave equation on a higher dimensional wormhole.  

\subsection{Harmonic Maps}

In this subsection, we briefly review the theory of harmonic maps for \eqref{s21}.  The main result is the following. 

\begin{ppn}\label{pa21}
	There exists a unique solution $Q_{\ell,n} \in \cl E_{\ell,n}$ to the equation 
	\begin{align}\label{sa21}
	Q'' + \frac{2r}{r^2 + 1} Q' - \frac{\ell(\ell+1)}{2(r^2 + 1)} \sin 2 Q = 0.
	\end{align}
	In the case $n = 0$, $Q_{\ell,0} = 0$.  If $n \in \N$, then $Q_{\ell,n}$ is increasing on $\R$, satisfies 
	$Q_{\ell,n}(r) + Q_{\ell,n}(-r) = n\pi$ and there exists $\al_{\ell,n} \in \R$ such that 
	\begin{align*}
	Q_{\ell,n}(r) &= n\pi - \al_{\ell,n} r^{-\ell -1} + O(r^{-\ell - 3}), \quad \mbox{as } r \rar \infty, \\
	Q_{\ell,n}(r) &= \al_{\ell,n} |r|^{-\ell -1} + O(r^{-\ell - 3}), \quad \mbox{as } r \rar -\infty.
	\end{align*}
	The $O(\cdot)$ terms satisfy the natural derivative bounds.  
\end{ppn}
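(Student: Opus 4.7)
The plan is to treat \eqref{sa21} as a boundary-value problem on $\R$ with the finite-energy class supplying the boundary data at $r=\pm\infty$. The case $n=0$ is essentially trivial: $Q\equiv 0$ is a solution, and uniqueness follows by multiplying \eqref{sa21} by $Q'(r^2+1)$ and integrating, using the finite-energy decay at $\pm\infty$ to conclude that an auxiliary conservation-type quantity vanishes identically, which combined with a sign analysis of $\sin 2Q$ forces $Q\equiv 0$.

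For $n\geq 1$, I would first establish existence variationally by directly minimizing $\mathcal{E}_\ell$ over $\mathcal{E}_{\ell,n}$. Coercivity is built into the definition of the energy, weak lower semicontinuity is automatic (the kinetic term is convex and the potential $\sin^2\psi/(r^2+1)$ is weakly continuous after Sobolev embedding), and because we work in one spatial variable the pointwise boundary values at $r=\pm\infty$ are preserved along weakly convergent minimizing sequences. The minimizer $Q$ satisfies \eqref{sa21} as its Euler--Lagrange equation and is smooth by standard ODE regularity. Monotonicity of $Q$ on $\R$ follows from a one-dimensional monotone rearrangement competitor, which strictly decreases the kinetic part unless $Q$ is already monotone. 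The symmetry $Q(r)+Q(-r)=n\pi$ is then immediate from the invariance of the equation and boundary conditions under $(r,Q)\mapsto(-r,n\pi-Q)$ together with uniqueness, and this also forces the leading coefficients at the two ends to agree, giving the common constant $\alpha_{\ell,n}$. For the asymptotic expansions I would linearize around $n\pi$: setting $\tilde Q=Q-n\pi$ for large $r$, the linearized equation
\begin{align*}
\tilde Q''+\frac{2r}{r^2+1}\tilde Q'-\frac{\ell(\ell+1)}{r^2+1}\tilde Q=0
\end{align*}
has two fundamental solutions whose leading behavior for large $r$ is $r^\ell$ and $r^{-\ell-1}$; the finite-energy condition selects the decaying mode, and a Picard iteration around this leading term controls the nonlinear remainder at the order $O(r^{-\ell-3})$ together with the stated derivative bounds. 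The behavior at $-\infty$ is then obtained by reflection.

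The main obstacle is uniqueness, since the variational argument only produces uniqueness of minimizers, not of all finite-energy solutions of \eqref{sa21}. For this I would run a shooting argument: the analysis at $r=-\infty$ shows that the finite-energy local solutions form a one-parameter family $\{Q_\alpha\}_{\alpha>0}$ indexed by the leading coefficient, and the goal is to prove that $\alpha\mapsto \lim_{r\to\infty}Q_\alpha(r)$ is strictly monotone on the range of $\alpha$ for which $Q_\alpha$ stays in a suitable interval. Strict monotonicity is extracted from the sign-preserving properties of the linearized equation along $Q_\alpha$ (comparing two nearby orbits via the Wronskian identity satisfied by $\partial_\alpha Q_\alpha$) together with the sign of $\sin 2Q_\alpha$ between consecutive equilibria. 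This is precisely the strategy carried out for $\ell=1$ in \cite{cpr}, and since the dependence on $\ell$ enters only through the size of the coefficient $\ell(\ell+1)/2$ in front of $\sin 2Q$ and through the decay exponents, the argument goes through unchanged for general $\ell$.
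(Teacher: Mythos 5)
Your existence argument takes a genuinely different route from the paper's: the paper proves existence by a shooting argument from $r=0$ (solutions $Q_b$ with $Q_b(0)=n\pi/2$, $Q_b'(0)=b$, openness of the over/undershoot sets, connectedness, and the symmetry $Q(r)\mapsto n\pi-Q(-r)$), whereas you minimize $\mathcal E_\ell$ directly. The variational route is plausibly workable, but not for the reason you give: boundary conditions survive weak limits not ``because we work in one spatial variable'' but because of the uniform weighted Strauss bound $|\psi(r)-n\pi|\lesssim \mathcal E_\ell(\psi)^{1/2}\la r\ra^{-1/2}$ and the fact that a transition between consecutive multiples of $\pi$ inside $\{r\ge R\}$ costs energy $\gtrsim R$; also, lower semicontinuity of the potential term should come from Fatou (it is not ``weakly continuous''), and the monotone-rearrangement step is dubious since the weight $r^2+1$ is not monotone on $\R$, so rearrangement does not obviously decrease the kinetic term. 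Monotonicity of $Q$ is obtained in the paper instead from the ODE analysis of solutions with limits in $\Z\pi$.

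The genuine gap is in the uniqueness step, which you correctly identify as the crux. As stated, strict monotonicity of $\alpha\mapsto\lim_{r\to\infty}Q_\alpha(r)$ is impossible: every solution of \eqref{sa21} has a limit at $+\infty$ in the countable set $\Z\pi\cup(\Z+\tfrac12)\pi$, and a strictly monotone (hence injective) map from a nondegenerate interval of $\alpha$'s into a countable set cannot exist. What uniqueness actually requires is that at most one $\alpha$ yield the limit $n\pi$, and your proposed mechanism does not deliver this: positivity of $\partial_\alpha Q_\alpha$ via ``sign-preserving properties of the linearized equation'' would need the linearized potential $\ell(\ell+1)\cos 2Q_\alpha$ to have a favorable sign, but $\cos 2Q_\alpha$ changes sign $2n$ times as $Q_\alpha$ increases from $0$ to $n\pi$, so no Sturm/maximum-principle argument applies directly for $n\ge1$; and even granting $\partial_\alpha Q_\alpha>0$ pointwise, this is compatible with two distinct $\alpha$'s both giving $Q_\alpha(\infty)=n\pi$ (the difference may simply decay), so one would additionally need a growth estimate for the linearized solution at $+\infty$. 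Moreover, this is not ``precisely the strategy carried out for $\ell=1$ in \cite{cpr}'': there (and in the present paper's sketch, following \cite{mctr}) one takes two putative finite-energy degree-$n$ solutions $Q_1,Q_2$, notes that if their leading coefficients at infinity coincide they are equal by Proposition \ref{pa22}, otherwise passes to $x=\arcsinh r$, assumes without loss of generality that $\frac{dQ_2}{dx}>\frac{dQ_1}{dx}$ for large $x$, shows this ordering propagates to all $x\in\R$, and then derives a contradiction with the decay asymptotics at $x\to-\infty$ (integrating the ordering over $\R$ contradicts $Q_i(\infty)-Q_i(-\infty)=n\pi$ for both). The essential comparison is thus between two actual solutions, not the parameter derivative along the family, and it is this two-solution comparison (or an equally effective substitute) that your proposal is missing.
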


The proof of Proposition \ref{pa21} is nearly identical to the proof of the corresponding statement, 
Proposition 2.1, in \cite{cpr} which was inspired by arguments in \cite{mctr}.  We briefly sketch the argument. 

\begin{proof}[Sketch of Proof]
	We first can use simple ODE arguments to show that every solution $Q$ to \eqref{sa21} is defined on $\R$ 
	and has limits $Q(\pm \infty)$ in $\Z \pi$ or $(\Z + \frac{1}{2}) \pi$. Moreover, if 
	$Q(\pm \infty) \in \Z \pi$, then $Q$ is monotonic and there exist $\al,\beta \in \R$ such that 
	\begin{align}
	\begin{split}\label{sa22}
	Q(r) &= Q(\infty) + \al r^{-\ell - 1} + O(r^{-\ell - 3}), \quad \mbox{as } r \rar \infty, \\
	Q(r) &= Q(-\infty) + \beta r^{-\ell - 1} + O(r^{-\ell - 3}), \quad \mbox{as } r \rar -\infty.
	\end{split}
	\end{align} For existence, we use a classical shooting argument. For $b > 0$, we consider the solution $Q_b$ to 
	\begin{align*}
	&Q_b'' + \frac{2r}{r^2 + 1} Q_b' - \frac{\ell(\ell+1)}{2(r^2 + 1)} \sin 2 Q_b = 0, \\
	&Q_b(0) = \frac{n\pi}{2}, \quad Q_b'(0) = b,
	\end{align*}
	and show the existence of a special value $b_*$ for the shooting parameter $b$ such that 
	$Q_{b_*}(\infty) = n\pi$.  Indeed, using the properties of general solutions to \eqref{sa21} already outlined 
	and simple ODE arguments, we can show that the sets
	\begin{align*}
	B_< &= \{ b > 0 : Q_b(\infty) < n\pi \}, \\ 
	B_> &= \{ b > 0 : Q_b(\infty) > n\pi \},
	\end{align*}
	are both nonempty, open, proper subsets of $(0,\infty)$.  By connectedness, there exists $b_*$ such that $Q_{b_*}(\infty) = 
	n\pi$.  From the initial condition $Q_{b_*} = 0$ and the symmetry $Q(r) \mapsto n\pi - Q(-r)$ of \eqref{sa21}, we conclude that 
	$Q_{b_*}(r) = n\pi - Q_{b_*}(-r)$ as well as $Q_{b_*}(-\infty) = 0$.  We then set $Q_{\ell,n} = Q_{b_*}$.  
	
	For the uniqueness of $Q_{\ell,n}$, suppose that there are two solutions $Q_1, Q_2$ to \eqref{sa21}.  By the previous discussion each solution 
	is monotonic increasing on $\R$ and satisfies \eqref{sa22}.  We change variables to $x = \arcsinh r$ so that
	\eqref{sa21} becomes  
	\begin{align}
	Q'' + \tanh x Q' - \frac{\ell(\ell+1)}{2}\sin 2 Q = 0. \label{sa23}
	\end{align}
	Based on \eqref{sa22} (in the $x$--variable) and \eqref{sa23}, we can then show that if we assume, without loss of generality, that 
	$\frac{dQ_2}{dx} > \frac{dQ_1}{dx}$ for $x$ large and positive, then 
	\begin{align*}
	\frac{dQ_2}{dx} > \frac{dQ_1}{dx}, \quad \forall x \in \R. 
	\end{align*}
	However, this can easily be shown to be incompatible with \eqref{sa22} as $x \rar -\infty$.  Thus, $Q_1 = Q_2$, and the solution 
	$Q_{\ell,n}$ constructed is unique.  For the full details of the argument in the $\ell =1$ case, see Section 2 in \cite{cpr}. 
\end{proof}

A fact that will be essential in the final section of this work is that we may always find a unique solution to \eqref{sa21}
with prescribed asymptotics as either $r \rar \infty$ or $r \rar -\infty$ (but not necessarily both).  

\begin{ppn}\label{pa22}  
	Let $\al \in \R$.  Then there exists a unique solution $Q^+_{\al}$ to \eqref{sa21} such that 
	\begin{align*}
	Q^+_\al = n \pi + \al r^{-\ell - 1} + O( r^{-\ell - 3}), \quad \mbox{as } r \rar \infty. 
	\end{align*}
	Similarly, given $\beta \in \R$, there exists a unique solution $Q^-_{\beta}$ to \eqref{sa21} such that 
	\begin{align*}
	Q^-_\beta = \beta r^{-\ell - 1} + O( r^{-\ell - 3}), \quad \mbox{as } r \rar -\infty. 
	\end{align*}
\end{ppn}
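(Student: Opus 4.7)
The plan is to construct $Q^+_\al$ by a contraction mapping argument on a half-line $[r_0, \infty)$ with $r_0$ large, and then to extend the fixed point to a global solution on $\R$ using the ODE theory from the proof of Proposition \ref{pa21}; the solution $Q^-_\beta$ follows from the symmetry $r \mapsto -r$ of \eqref{sa21}. Substituting $Q = n\pi + R$ (so that $\sin 2Q = \sin 2R$), the equation for $R$ becomes
$$L R := R'' + \frac{2r}{r^2+1} R' - \frac{\ell(\ell+1)}{r^2+1} R \;=\; F(R) := \frac{\ell(\ell+1)}{2(r^2+1)}\bigl(\sin 2R - 2R\bigr),$$
with $|F(R)| \lesssim |R|^3/(r^2+1)$ for small $R$. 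The cubic nature of $F$ and the decay required of $R$ near infinity will drive the contraction, and the argument is independent of the value of $n$.

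The key linear input is a pair of fundamental solutions $\phi_\pm$ of $L u = 0$ with asymptotics $\phi_+(r) \sim r^\ell$ and $\phi_-(r) = r^{-\ell-1} + O(r^{-\ell-3})$ as $r \rar \infty$, with the natural derivative bounds. These are produced by a standard ODE asymptotic analysis: writing $L$ in self-adjoint form $((r^2+1)u')' - \ell(\ell+1)u = 0$ and either making the formal power series ansatz $u = r^\gamma\sum_{k \geq 0} c_k r^{-2k}$ closed by a weighted contraction, or performing the change of variables $x = \arcsinh r$ used in the proof of Proposition \ref{pa21}, after which the equation reads $\tilde u_{xx} + \tanh x\, \tilde u_x - \ell(\ell+1) \tilde u = 0$, a perturbation at infinity of a constant coefficient equation with characteristic exponents $\ell$ and $-(\ell+1)$ (corresponding to $r^\ell$ and $r^{-\ell-1}$). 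Their Wronskian $W := \phi_-\phi_+' - \phi_-'\phi_+$ satisfies $W' + \tfrac{2r}{r^2+1}W = 0$, hence $W(r) = c_\ell/(r^2+1)$ for some nonzero constant $c_\ell$.

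Variation of parameters then rewrites the ODE as the integral equation
$$T_\al(R)(r) \;:=\; \al\, \phi_-(r) \;+\; \int_r^\infty \frac{\phi_-(r)\phi_+(s) - \phi_+(r)\phi_-(s)}{W(s)}\, F(R(s))\, ds,$$
the integration from $r$ to $\infty$ being chosen to eliminate any $\phi_+$ component at infinity and preserve the prescribed asymptotic. On the Banach space $X_{r_0} := \bigl\{ R \in C([r_0,\infty)) : \|R\|_{X_{r_0}} := \sup_{r \geq r_0} r^{\ell+1}|R(r)| < \infty \bigr\}$, I would show that for $r_0 = r_0(\al)$ sufficiently large $T_\al$ is a contraction on the closed ball $\{\|R\|_{X_{r_0}} \leq 2|\al|\}$. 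The pointwise estimate $|F(R(s))| \lesssim \|R\|_{X_{r_0}}^3 s^{-3\ell-5}$, combined with $1/|W(s)| \lesssim s^2$ and the asymptotics of $\phi_\pm$, yields $|T_\al(R)(r) - \al\, \phi_-(r)| \lesssim \|R\|_{X_{r_0}}^3 r^{-3\ell-3}$ for $r \geq r_0$, a nonlinear contribution of size $O(\|R\|_{X_{r_0}}^3 r_0^{-2\ell-2})$ in $X_{r_0}$; the analogous bilinear estimate using $|F(R_1) - F(R_2)| \lesssim (|R_1|^2 + |R_2|^2)|R_1 - R_2|/(r^2+1)$ delivers a contraction constant $O(|\al|^2 r_0^{-2\ell-2}) < 1$ for $r_0$ large.

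The resulting fixed point extends to a smooth global solution of \eqref{sa21} by the theory in Proposition \ref{pa21}, and setting $Q^+_\al := n\pi + R$ gives the required solution. Substituting the bound $R = O(r^{-\ell-1})$ back into $T_\al$, together with the refined expansion $\phi_-(r) = r^{-\ell-1} + O(r^{-\ell-3})$, yields the stated asymptotic and, by differentiating the integral equation, the natural derivative bounds. For uniqueness, any two solutions $R_1, R_2$ with the prescribed asymptotics both satisfy $T_\al(R_i) = R_i$ (the prescribed leading behavior fixes both constants of integration in the general solution of $LR_i = F(R_i)$) and both lie in the contraction ball for $r_0$ large, so $R_1 = R_2$ on $[r_0,\infty)$, and ODE uniqueness propagates the equality to all of $\R$. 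The main obstacle is the asymptotic construction of the fundamental solutions $\phi_\pm$ with enough precision to make the variation of parameters computation clean; once this is in place, the cubic contraction step is essentially routine, and the $\arcsinh$ substitution makes the linear analysis transparent.
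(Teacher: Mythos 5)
Your construction is correct, and it is the standard route to this statement: the paper itself omits the proof entirely (deferring to Proposition 2.4 of \cite{cpr}), and the Lyapunov--Perron/variation-of-parameters contraction you set up around the decaying branch $\phi_-\sim r^{-\ell-1}$, with the cubic estimate $|\sin 2R-2R|\lesssim |R|^3$ driving both the self-map and the contraction bounds, is exactly the type of perturbative ODE analysis at infinity that underlies the asymptotics in Proposition \ref{pa21} and the cited result. Your uniqueness step is also sound, since any solution with the prescribed decay kills the $\phi_+$ component and hence satisfies the same integral equation. One small imprecision: the map $r\mapsto -r$ alone sends a solution tending to $n\pi$ at $+\infty$ to one tending to $n\pi$ at $-\infty$, so to obtain $Q^-_\beta$ (which tends to $0$ at $-\infty$) you must also compose with the shift symmetry $Q\mapsto Q-n\pi$ (or simply rerun the identical contraction at $-\infty$ around $0$); this is trivial to fix and does not affect the argument.
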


\begin{proof}
	The proof is nearly identical to the proof of Proposition 2.4 in \cite{cpr} and we omit the details. 
\end{proof}

\subsection{Reduction to a Wave Equation on a Higher Dimensional Wormhole}

In this subsection we reduce the study of the large
data solutions to \eqref{s21} to the study of large data solutions to a semilinear wave equation on a 
higher dimensional wormhole geometry. This process is a generalization of the analogous 
step in the corotational case in \cite{cpr}.  

By Proposition \ref{pa21}, there exists a unique static solution $Q_{\ell,n}(r) \in \cl E_{\ell,n}$ to \eqref{s21}.
For a solution $\psi$ to \eqref{s21}, we define $\vphi$ by 
\begin{align*}
\psi(t,r) = Q_{\ell,n}(r) + \vphi(t,r).
\end{align*}
Then \eqref{s21} implies that $\vphi$ satisfies
\begin{align}\label{s23} 
\begin{split}
&\p_t^2 \vphi - \p_r^2 \vphi - \frac{2r}{r^2 + 1} \p_r \vphi + \ell(\ell+1)\frac{\cos 2 Q_{\ell,n}}{r^2 + 1} \vphi = Z(r,\vphi), \\
&\vphi(t,-\infty) = \vphi(t,\infty) = 0, \quad \forall t, \\
&\vec \vphi(0) = (\psi_0-Q_{\ell,n},\psi_1), 
\end{split}
\end{align}
where 
\begin{align*}
Z(r,\phi) = \frac{\ell(\ell+1)}{2(r^2 + 1)} \left [ 2 \vphi - \sin 2\vphi \right ] \cos 2 Q_{\ell,n} + ( 1 - \cos 2 \vphi) 
\sin 2 Q_{\ell,n}.
\end{align*}
The left--hand side of \eqref{s23} has more dispersion than a free wave on $\M^3$ due to the repulsive potential 
\begin{align*}
\ell(\ell+1)\frac{\cos 2 Q}{r^2 + 1} = \frac{\ell(\ell+1)}{r^2 + 1} + O(\la r \ra^{-2\ell-4} )
\end{align*}
as $r \rar \pm \infty$. Here and throughout this work, we use the Japanese bracket notation $\la r \ra = (r^2 + 1)^{1/2}$.  The $O(\cdot)$ term is a consequence of the 
asymptotics from Proposition \ref{pa21}.   We now make a standard
reduction that incorporates this extra dispersion.  We define $u$ and $d$ via the relations 
\begin{align*}
\vphi &= \la r \ra^{\ell} u, \\
d &= 2 \ell + 3.
\end{align*}
We define the $d$--dimensional wormhole $\M^d = \R \times \s^{d-1}$ with metric 
\begin{align*}
ds^2 = dr^2 + (r^2 + 1) d\Omega^2_{d-1}, 
\end{align*}
where $d\Omega^2_{d-1}$ is the standard round metric on $\s^{d-1}$.  Since we will only be dealing with 
functions depending solely on $r$, we will abuse notation slightly and denote 
the radial part of the Laplacian on $\M^d$ by $-\Delta_g$, i.e. 
\begin{align*}
-\Delta_g u = - \p_r^2 u - \frac{(d-1)r}{r^2 + 1} \p_r u.
\end{align*}
By \eqref{s23}, $u$ satisfies
the radial semilinear wave equation
\begin{align}\label{s24} 
\begin{split}
&\p_t^2 u - \Delta_g u + V(r) u = N(r,u), \\
&u(t,-\infty) = u(t,\infty) = 0, \quad \forall t, \\
&\vec u(0) = (u_0,u_1),
\end{split}
\end{align}
where the potential term is given by 
\begin{align}\label{s25}
V(r) = \frac{\ell^2}{\la r \ra^4} + \ell(\ell+1) \frac{ \cos 2 Q - 1 }{\la r \ra^2}, 
\end{align}
and the nonlinearity $N(r,u) = F(r,u) + G(r,u)$ is given by 
\begin{align}
\begin{split}\label{s26}
F(r,u) &= \frac{\ell(\ell+1)}{\la r \ra^{\ell+2}} \sin^2 (\la r \ra^\ell u) \sin 2 Q_{\ell,n}, \\
G(r,u) &= \frac{\ell(\ell+1)}{2 \la r \ra^{\ell+2}} \left [ 2 \la r \ra^\ell u - \sin (2 \la r \ra^\ell u) \right ] \cos 2 Q_{\ell,n}.
\end{split}
\end{align}
By Proposition \ref{pa21}, the potential $V$ is smooth and satisfies
\begin{align}\label{s27}
V(r) = \frac{\ell^2}{\la r \ra^4} + O ( \la r \ra^{-2\ell - 4} ).
\end{align}
Also, by Proposition \ref{pa21} $Q_{\ell,n}(-r) + Q_{\ell,n}(r) = n\pi$ which implies that $V(r)$ is an even function.  The nonlinearities $F$ and $G$ satisfy 
\begin{align}
F(r,u) &= \left ( \ell(\ell+1) \la r \ra^{\ell - 2} \sin 2 Q_{\ell,n}  \right ) u^2 + F_0(r,u), \label{s28a}
\end{align}
where 
\begin{align}
|F_0(r,u)| &\lesssim \la r \ra^{2\ell - 3} |u|^4, \label{s28b}
\end{align}
and 
\begin{align}
|G(r,u)| &\lesssim \la r \ra^{2\ell - 2} |u|^3, \label{s29}
\end{align}
where the implied constants depend only on $\ell$.  Since the original azimuth angle $\psi = Q_{\ell,n} + \la r \ra^\ell u
\in \cl E_{\ell,n}$ 
, we take initial data $(u_0,u_1) 
\in \mathcal H(\R; \la r \ra^{d-1} dr)$ for \eqref{s24}.  For the remainder of this section and the next we denote 
\begin{align*}
\h_0 := \h(\R; \la r \ra^2 dr), \quad \h:= \h(\R; \la r \ra^{d-1} dr),
\end{align*}  
and note that 
$\h_0$ is simply the space of radial functions in $\dot H^1 \times L^2(\M^3)$ while $\h$ is the space 
of radial functions in $\dot H^1 \times L^2(\M^d)$. 

In the remainder of the paper, we work in the \lq $u$--formulation' rather than with the original azimuth angle 
$\psi$.  We first show that a solution $\vec \psi(t) \in C(\R; \cl \h_n)$ to \eqref{s21} with initial data $(\psi_0,\psi_1) \in 
\cl E_{\ell,n}$ yields a solution $\vec u(t) \in C(\R; \h)$ with initial data $(u_0,u_1) = \la r \ra^{-\ell}( \psi_0 - Q_{\ell,n}, \psi_1)
\in \h$ and vice versa.  The only fact that needs to be checked is that 
\begin{align}\label{s210}
\| \vec u \|_{\h} \simeq \left \| \vec \psi - (Q_{\ell,n},0) \right \|_{\h_0}.
\end{align}
We define $\vphi = \psi - Q_{\ell,n} = \la r \ra^\ell u$ and compute 
\begin{align}
\p_r \vphi = \la r \ra^\ell \p_r u + \ell r \la r \ra^{\ell-2} u. \label{s211}
\end{align}
We first note that by the fundamental theorem of calculus, we have the Strauss estimates
\begin{align}
\begin{split}\label{s211s}
|\vphi(r)| &\lesssim \la r \ra^{-1/2} \left ( \int |\p_r \vphi|^2 \la r \ra^2 dr \right )^{1/2}, \\
|u(r)| &\lesssim \la r \ra^{(2-d)/2} \left ( \int |\p_r u|^2 \la r \ra^{d-1} dr \right )^{1/2}.
\end{split}
\end{align}
Using the Strauss estimates and integration by parts, we have the following Hardy's inequalities,
\begin{align}
\int |\vphi|^2 dr &\lesssim \int |\p_r \vphi|^2 \la r \ra^2 dr, \notag \\
\int |u|^2 \la r \ra^{d-3} dr &\lesssim \int |\p_r u|^2 \la r \ra^{d-1} dr. \label{s211h}
\end{align}
Recalling that $d$ and $\ell$ are related by $d = 2 \ell + 3$, we see that the relation \eqref{s211} and the two Hardy's inequalities
immediately imply \eqref{s210}.  Hence, the two Cauchy problems \eqref{s21} and \eqref{s24} are equivalent.  

The equivalent $u$--formulation of our main result, Theorem \ref{t01}, is the following. 

\begin{thm}\label{t21}
	For any initial data $(u_0,u_1) \in \h$, there exists a unique global solution $\vec u(t) \in C(\R; \h)$ to \eqref{s24}
	which scatters to free waves on $\M^d$, i.e. there exist solutions $v_L^{\pm}$ to 
	\begin{align*}
	\p_t^2 v - \p_r^2 v - \frac{(d-1)r}{r^2 + 1} \p_r v = 0, \quad (t,r) \in \R \times \R, 
	\end{align*}
	such that 
	\begin{align*}
	\lim_{t \rightarrow \pm \infty} \| \vec u(t) - \vec v_L^{\pm}(t) \|_{\h} = 0. 
	\end{align*}
\end{thm}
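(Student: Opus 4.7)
The plan is to follow the concentration--compactness/rigidity strategy of Kenig and Merle, as outlined in the introduction. First I would establish a small data scattering theory for \eqref{s24}. Since $d = 2\ell + 3 \geq 5$, the wormhole $\M^d$ is asymptotically Euclidean of high dimension, and one obtains Strichartz estimates for the linear flow $\p_t^2 v - \Delta_g v = 0$ on $\M^d$ (treating the curvature correction to the flat Euclidean flow at each end perturbatively). The potential $V$ decays like $\la r \ra^{-4}$ by \eqref{s27} and can be absorbed as a short--range perturbation. Combining the pointwise Strauss bound \eqref{s211s} with the nonlinearity bounds \eqref{s28a}--\eqref{s29} gives $|N(r,u)| \lesssim \la r \ra^{-3} |u|^2 + \la r \ra^{d-5} |u|^3 + \la r \ra^{d-6} |u|^4$, and the Strauss factor makes the effective power high enough for a standard contraction mapping argument on Strichartz space. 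Global well--posedness for arbitrary data is much easier than in the flat case: energy conservation together with the Strauss estimate bounds $\|\vec u(t)\|_{\h}$ uniformly in time on any interval of existence, so a local existence plus time--stepping argument yields a global solution.

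With small data scattering and global existence in hand, the second step is the usual concentration--compactness argument. Supposing Theorem \ref{t21} fails, I would define the critical energy
\begin{align*}
E^* := \inf \{ E : \text{there exists a non--scattering solution to \eqref{s24} of energy } E \},
\end{align*}
and use a linear profile decomposition for the free flow on $\M^d$ (together with the fact that the potential $V$ is a short--range perturbation and the non--quadratic part of $N$ is superquadratic) to extract, through a Pythagorean expansion of the energy and a nonlinear perturbation lemma, a minimal non--scattering solution $u_*$ at energy $E^*$. Standard Kenig--Merle arguments then show that the trajectory $K = \{ \vec u_*(t) : t \in \R \}$ is precompact in $\h$.

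The third step is the rigidity statement: any solution $\vec u$ of \eqref{s24} with precompact trajectory in $\h$ must be identically zero. I would carry this out by channels of energy in the spirit of \cite{dkm4}, \cite{kls1}, \cite{klls2}. The idea is to conjugate away the first--order term in $-\Delta_g$ by setting $w = \la r \ra^{(d-1)/2} u$ (or a similar weight) so that the linear part of \eqref{s24} becomes a one--dimensional wave equation on $\R$ with a potential that decays like $\la r \ra^{-2}$, close to a free wave equation outside a compact set. On this reduced equation one proves an exterior energy lower bound: for any $R > 0$, a nonzero radial finite energy free solution deposits a fixed fraction of its energy in $\{|r| \geq |t| + R\}$ as $t \to +\infty$ or as $t \to -\infty$. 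Combined with the precompactness of $K$, which forces the exterior energy of $\vec u_*(t)$ to vanish as $|t| \to \infty$, and with a perturbative treatment of $V$, of the cubic/quartic part of $N$, and of the leading quadratic piece in \eqref{s28a} (whose coefficient $\sin 2 Q_{\ell,n}$ decays rapidly by Proposition \ref{pa21}), this forces $\vec u_*$ to be a static solution to \eqref{s24}. Via the substitution $\psi = Q_{\ell,n} + \la r \ra^\ell u_*$, any such static solution produces a finite energy harmonic map in $\cl E_{\ell,n}$, and the uniqueness part of Proposition \ref{pa21} forces $u_* \equiv 0$, contradicting the minimality of $u_*$.

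The main obstacle is clearly the rigidity step. The channels of energy technology is cleanest on flat Euclidean backgrounds, where explicit Fourier or Hankel representations of the free radial wave are available. Here the background is only asymptotically Euclidean and the reduced linear equation carries a genuine (albeit decaying) potential coming both from the geometry and from $V$; the challenge is to establish the exterior energy lower bound in this setting and to verify that none of the perturbative terms (the linear potential, the cubic and quartic parts of $N$, and crucially the quadratic coefficient $\la r \ra^{\ell-2}\sin 2Q_{\ell,n}$ in \eqref{s28a}) can absorb the lower bound. This is precisely where the asymptotically Euclidean structure of $\M$ at $r = \pm \infty$ is essential, since it is exactly outside large light cones that the wormhole is a small perturbation of flat space and the channels of energy estimates of \cite{kls1,klls2} can be adapted.
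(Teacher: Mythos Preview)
Your three-step outline matches the paper's strategy exactly, and steps one and two are fine as sketched. The gap is in your description of the rigidity argument.

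You propose to conjugate by $w = \la r \ra^{(d-1)/2} u$ so that the linear part becomes a one-dimensional wave equation on $\R$ with a potential ``close to a free wave equation outside a compact set.'' This is not what happens. A direct computation shows that the conjugated potential behaves like $\frac{(d-1)(d-3)}{4r^2}$ as $|r| \to \infty$; this is the usual centrifugal barrier and it is \emph{long-range} for the one-dimensional wave equation. The simple exterior energy lower bound you quote (``a nonzero free solution deposits a fixed fraction of its energy in $\{|r|\ge |t|+R\}$'') is false for $-\p_r^2 + \frac{(d-1)(d-3)}{4r^2}$ when $d\ge 5$: there is a $(\tilde k + k)$-dimensional family of Newton-type solutions $r^{2i-d}$, $r^{2j-d}$ that carry no exterior energy at all. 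So the perturbative scheme you describe cannot close.

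The paper avoids this by a different conjugation. Working separately on $r>0$ and $r<0$, it sets $u_e = (\la r\ra/r)^{(d-1)/2} u$, which transforms \eqref{s24} into a \emph{radial} wave equation on Euclidean $\R^{1+d}$ with a potential $V_e$ that genuinely decays like $r^{-4}$. One then invokes the exterior energy estimate of \cite{klls1} for the free radial wave in odd dimensions, which holds only modulo orthogonal projection onto the finite-dimensional plane
\[
P(R)=\mathrm{span}\bigl\{(r^{2i-d},0),(0,r^{2j-d})\bigr\}\subset \h(r\ge R).
\]
The bulk of the rigidity proof is then an induction on the projection coefficients $\lambda_i(t,R),\mu_j(t,R)$ of $\vec u_e(t)$ onto $P(R)$: one shows iteratively that all but the leading $\lambda_1$ decay, identifies the limit $\alpha=\lim_{r\to\infty}\lambda_1(0,r)$, matches it to an explicit harmonic map via Proposition~\ref{pa22}, and only then concludes that $\vec u$ is static. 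This projection bookkeeping is the real content of the argument and is precisely what your one-dimensional reduction cannot see.
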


The remainder of this work is devoted to proving Theorem \ref{t21}.




\section{Small Data Theory and Concentration--Compactness}

In this section we begin the proof of Theorem \ref{t21} and the study of the nonlinear evolution introduced in the previous section:
\begin{align}
\begin{split}\label{s31}
&\p_t^2 u - \Delta_g u + V(r) u = N(r,u), \quad (t,r) \in \R \times \R, \\
&\vec u(0) = (u_0,u_1) \in \h,
\end{split}
\end{align}
where $\h := \h(\R; \la r \ra^{d-1} dr)$, $d = 2 \ell + 3$, $-\Delta_g$ is the (radial) Laplace operator on the $d$--dimensional
wormhole $\M^d$, and $V(r)$ and 
$N(r,u)$ are given in \eqref{s25} and \eqref{s26}.  

As summarized in the introduction, the proof of Theorem \ref{t21}, or equivalently 
Theorem \ref{t01}, uses the powerful concentration--compactness/rigidity 
methodology introduced by Kenig and Merle in their study of energy--critical dispersive equations \cite{km06} \cite{km08}.  
This methodology was used in the corototational case, $\ell = 1$, $d = 5$, in \cite{cpr}.  The general situation $\ell \in \N$ 
requires many refinements due to the growing dimension $d$.    

The proof of Theorem \ref{t21} 
is split up into three main steps and is by contradiction.  In the first step, we establish small data global well--posedness 
and scattering for \eqref{s31}.  In particular, we establish Theorem \ref{t21} if  $\| (u_0,u_1) \|_{\h} \ll 1$.  In the 
second step, we use the first step and a concentration--compactness argument to show that the \emph{failure} of Theorem \ref{t21} implies
that that there exists a nonzero \lq critical element' $u_*$;  a minimal non--scattering global solution to \eqref{s31}. 
The minimality of $u_*$ imposes the following compactness property on $u_*$: the trajectory 
\begin{align*}
K = \left \{ \vec u_*(t) : t \in \R \right \} 
\end{align*}
is precompact in $\h$. In the third and final step, we establish  the following rigidity theorem: every solution $u$ with $\{ \vec u(t) : t \in \R \}$
precompact in $\h$ must be identically 0.  This contradicts the second step which implies that Theorem \ref{t21} holds.

In this section 
we complete the first two two steps in the program: small data theory and concentration--compactness. 
The proofs for these steps are straightforward generalizations of or nearly identical to those in the corototational case in \cite{cpr}.  We will therefore only outline the main steps and refer 
the reader to the relevant proofs in \cite{cpr} for full details.

\subsection{Small Data Theory}

In this subsection, we establish global well--posedness and scattering for small data solutions to \eqref{s31}.  The key 
tools for establishing this and facts found later in this section are Strichartz estimates
for the inhomogeneous wave equation with potential
\begin{align}
\begin{split}\label{s32}
&\p_t^2 u - \Delta_g u + V(r) u = h(t,r), \quad (t,r) \in \R \times \R, \\
&\vec u(0) = (u_0,u_1) \in \h.
\end{split}
\end{align}
Here, as in the previous section, 
\begin{align*}
-\Delta_g u = - \p_r^2 u - \frac{(d-1)r}{r^2 + 1} \p_r u,
\end{align*}
and the potential $V$ is given by 
\begin{align*}
V(r) = \frac{\ell^2}{\la r \ra^4} + \ell(\ell+1) \frac{ \cos 2 Q_{\ell,n} - 1 }{\la r \ra^2}, 
\end{align*}
where $Q_{\ell,n}$ is the unique $\ell$--equivariant harmonic map of degree $n$.  The conserved energy 
for the homogeneous problem, $h \equiv 0$ in \eqref{s32}, is given by 
\begin{align*}
\cl E_V(\vec u) = \frac{1}{2} \int_\R \left ( |\p_t u|^2 + |\p_r u|^2 + V(r) |u|^2 \right ) \la r \ra^{d-1} dr. 
\end{align*}
In exactly the same fashion as in the corotational case, it can be shown that the operator $-\Delta_g + V(r)$ defined (densely) on $L^2(\cl M^d) = L^2(\R; \la r \ra^{d-1}dr)$ is a nonnegative self--adjoint operator and 0 is neither an eigenvalue nor a resonance.  Moreover, from this spectral information we conclude $\| \vec u \|_{\h}^2 \simeq 
\cl E_V(\vec u)$ along with the following Strichartz estimates (see 
Section 4 and Section 5 of \cite{cpr} for full details of the arguments).  

We say that a triple $(p,q,\gamma)$ is \emph{admissible} if 
\begin{align*}
p > 2, q \geq 2, \quad \frac{1}{p} + \frac{d}{q} = \frac{d}{2} - \gamma, \quad 
\frac{1}{p} \leq \frac{d-1}{2} \Bigl ( \frac{1}{2} - \frac{1}{q} \Bigr ).
\end{align*}
In the sequel, we use the notation for spacetime norms over $I \times \M^d$ via 
\begin{align*}
\| u \|_{L^p_t L^q_x(I)} := \left ( \int_I \left ( 
\int_\R |u(t,r)|^q \la r \ra^{d-1} dr \right )^{p/q} dt \right )^{1/p}.
\end{align*}

\begin{ppn}\label{p31}
	Let $(p,q,\gamma)$ and $(r,s,\rho)$ be admissible.  Then any solution $u$ to \eqref{s32} satisfies 
	\begin{align*}
	\| |\nabla|^{-\gamma} \nabla u \|_{L^p_t L^q_x(I)} \lesssim \| \vec u(0) \|_{\h} +
	\| |\nabla|^{\rho} h \|_{L^{r'}_t L^{s'}_\rho(I)},
	\end{align*}
	where $r'$ and $s'$ are the conjugates of $r$ and $s$.  
\end{ppn}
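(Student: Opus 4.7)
The plan is to follow the template from the corotational case $\ell = 1$, $d = 5$ treated in Sections 4--5 of \cite{cpr}, making the routine modifications required by the general dimension $d = 2\ell + 3$ and the modified equivariance potential $V$ from \eqref{s25}. The argument splits into (i) a free--wave Strichartz estimate on $\cl M^d$, (ii) transfer of the estimate to the perturbed operator $-\Delta_g + V$, and (iii) the passage from the homogeneous estimate to the inhomogeneous one via Christ--Kiselev.

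For step (i), I would conjugate by $u = \la r \ra^{-(d-1)/2} w$ to convert the radial Laplacian on $\cl M^d$ into a one--dimensional Schr\"odinger operator $-\p_r^2 + W_0(r)$ on $L^2(\R)$, whose effective potential $W_0$ is smooth, bounded near $r = 0$, and behaves like the standard centrifugal tail $\sim (d-1)(d-3)/(4r^2)$ at the two asymptotically Euclidean ends. A limiting absorption principle and the associated distorted Fourier transform, constructed as in \cite{cpr} via Jost solutions at each end, then yield $L^1 \rar L^\infty$ dispersive decay for the free evolutions $\cos(t\sqrt{-\Delta_g})$ and $\sqrt{-\Delta_g}^{-1} \sin(t\sqrt{-\Delta_g})$. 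Standard Keel--Tao $TT^*$ interpolation then produces the full admissible family of homogeneous Strichartz estimates on $\cl M^d$.

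For step (ii), the spectral facts quoted in the text---nonnegativity, self--adjointness of $-\Delta_g + V$, and the absence of a zero eigenvalue or resonance---are precisely what is needed to construct wave operators $W_\pm$ intertwining the free and perturbed flows and bounded on the relevant Sobolev spaces; these transfer the free Strichartz estimates to the perturbed evolution. Equivalently, one can close a direct Duhamel perturbation argument exploiting the decay $V(r) = \ell^2 \la r \ra^{-4} + O(\la r \ra^{-2\ell - 4})$ from \eqref{s27}, which places $V$ squarely in the short--range regime. For step (iii), the inhomogeneous version of the estimate follows from the homogeneous one by duality together with the Christ--Kiselev lemma, whose applicability is exactly the source of the strict inequality $p > 2$ in the admissibility condition.

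The main obstacle is the low--energy analysis of the spectral measure of the one--dimensional conjugated operator, which governs the dispersive decay at large times and is where the absence of a zero resonance is decisive. In the case $d = 5$ treated in \cite{cpr} this required a careful study of the zero--energy Jost solutions at $r = \pm \infty$; for general $d = 2\ell + 3$ the same outline applies, but one must verify that the two decaying Jost solutions at the opposite ends remain linearly independent at zero energy, using the explicit asymptotics of $Q_{\ell,n}$ from Proposition \ref{pa21}. Once this zero--energy nondegeneracy is in place, the remaining steps---functional calculus, $TT^*$ interpolation, Christ--Kiselev, and the wave operator construction---are all dimension--independent and proceed as in the $\ell = 1$ case.
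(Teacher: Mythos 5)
Your overall architecture for the free part -- conjugating by $\la r\ra^{-(d-1)/2}$ to a one--dimensional operator with inverse--square tails, building a distorted Fourier transform from Jost solutions, isolating the zero--energy nondegeneracy, then Keel--Tao and Christ--Kiselev -- is consistent in spirit with what this paper inherits from \cite{cpr} (where Proposition \ref{p31} with $V=0$ is proved), and your identification of the zero--energy analysis as the delicate point is correct. The genuine gap is in your step (ii), the transfer to $-\Delta_g + V$. The paper does not use wave operators or a bare Duhamel contraction: it reduces Proposition \ref{p31} to a pair of local energy (local smoothing) estimates for the perturbed evolution, and proves those estimates using the spectral information (nonnegativity, no zero eigenvalue or resonance) together with the distorted Fourier basis of $-\Delta_g + V$, the evenness of $V$ entering the parity analysis of that basis. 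Your two proposed substitutes do not deliver this. Boundedness of wave operators on the $L^q$/Sobolev spaces occurring in the Strichartz norms is a Yajima--type theorem requiring its own proof; it is not ``precisely'' a consequence of self--adjointness, nonnegativity and absence of a zero resonance, and here the comparison operator is not the flat one--dimensional Laplacian but the conjugated operator whose ``free'' part already carries the tails $\sim (d-1)(d-3)/(4r^2)$ at both ends, for which no off--the--shelf $W^{k,p}$--boundedness result is available. The ``direct Duhamel perturbation argument'' also does not close: $V$ is of fixed size, and its spatial decay $O(\la r\ra^{-4})$ produces no smallness in the dual Strichartz norm globally in time, so a contraction or bootstrap purely in Strichartz norms fails; the whole point of the local energy estimate is that it converts the spatial decay of $V$ into a quantity controlled uniformly in time, which is then fed back into the free inhomogeneous Strichartz estimate.

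So the missing idea is the local--energy--decay intermediary (equivalently, one could prove dispersive or Strichartz bounds directly for the perturbed evolution via its own distorted Fourier transform, but that is again a substantive argument, not functional calculus). Your step (iii) via Christ--Kiselev and the role of $p>2$ is fine and matches the standard treatment.
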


Proposition \ref{p31} with $V = 0$ was proved in Section 3 of \cite{cpr}.  Using the spectral information 
for $-\Delta_g + V$ we can then transfer these estimates to the perturbed wave operator $\p_t^2 - \Delta_g + V$.
This is done by first reducing Proposition \ref{p31} to a pair of local energy estimates.  These estimates are then established using the spectral information and a distorted Fourier basis for $-\Delta_g + V$ (the fact that $V$ is even also plays a role in the analysis).  Again, for full details see Section 4 of \cite{cpr}.  

For $I \subseteq \R$, we denote the following spacetime norms
\begin{align*}
\| u \|_{S(I)} &:= \Bigl \| \la r \ra^{(d-5)/3} u \Bigr \|_{L^3_t L^6_x(I)}
+ \| u \|_{L^3_t L^{\frac{3d}{2}}_x(I)}, \\
\| u \|_{W(I)} &:= \| u \|_{L^3_t \dot W^{\frac{1}{2},\frac{6d}{3d-5}}_x(I)}, \\
\| h \|_{N(I)} &:= \| F \|_{L^1_tL^2_x(I) + L^{3/2}_t \dot W^{\frac{1}{2},\frac{6d}{3d+5}}_x(I)}.
\end{align*}
We first use Proposition \ref{p31} to show that for any solution $u$ to \eqref{s32}, we have the estimate 
\begin{align}\label{s33a}
\| u \|_{S(I)} + \| u \|_{W(I)} \lesssim \| \vec u(0) \|_{\h} + \| h \|_{N(I)}.  
\end{align}
Indeed, we have directly from Proposition \ref{p31} 
\begin{align}\label{s33}
\| u \|_{W(I)} \lesssim \| \vec u(0) \|_{\h} + \| h \|_{N(I)}.  
\end{align}
We claim that for all radial $f \in C^\infty_0(\M^d)$, we have
\begin{align}\label{s34a}
\Bigl \| \la r \ra^{(d-5)/3} f \Bigr \|_{L^6_x} + \| f \|_{L^{\frac{3d}{2}}_x} \lesssim \| f \|_{\dot W^{\frac{1}{2},\frac{6d}{3d-5}}_x}
\end{align}
(we recall that the volume element is $\la r \ra^{d-1} dr$).  Define $m_0, m_1 > 1$ by the relations 
\begin{align}
\begin{split}\label{s34}
\frac{1}{2} \cdot \frac{1}{3} + \frac{1}{2} \cdot \frac{1}{m_0} &= \frac{3d - 5}{6d}, \\
\frac{1}{2} \cdot \frac{4}{3d} + \frac{1}{2} \cdot \frac{1}{m_1} &= \frac{3d - 5}{6d},
\end{split}
\end{align}
i.e. $\frac{1}{m_0} = \frac{2d - 5}{3d}$ and $\frac{1}{m_1} = \frac{3d-9}{3d}$. By the fundamental theorem of calculus 
\begin{align*}
|f(r)| &\lesssim \| f \|_{\dot W^{1,m_0}} \la r \ra^{-\frac{2}{3}(d-4)}, \\
|f(r)| &\lesssim \| f \|_{\dot W^{1,m_1}} \la r \ra^{-(d-4)}.   
\end{align*}
Thus, we have the embeddings  
\begin{align}
\begin{split}\label{s35}
\Bigl \| \la r \ra^{\frac{2}{3}(d-4)} f \Bigr \|_{L^\infty_x} \lesssim \| f \|_{\dot W^{1,m_0}}, \\
\Bigl \| \la r \ra^{d-4} f \Bigr \|_{L^\infty_x} \lesssim \| f \|_{\dot W^{1,m_1}}.
\end{split}
\end{align}
From the trivial embedding $L^3_x \hookrightarrow L^3_x$, \eqref{s35}, \eqref{s34} and interpolation we conclude that 
\begin{align*}
\Bigl \| \la r \ra^{(d-4)/3} f \Bigr \|_{L^6_x} \lesssim \| f \|_{\dot W^{\frac{1}{2},\frac{6d}{3d-5}}_x} 
\end{align*}
which implies 
\begin{align*}
\Bigl \| \la r \ra^{(d-5)/3} f \Bigr \|_{L^6_x} \lesssim \| f \|_{\dot W^{\frac{1}{2},\frac{6d}{3d-5}}_x}.
\end{align*}
Similarly, from the trivial embedding $L^{\frac{3d}{4}}_x \hookrightarrow L^{\frac{3d}{4}}_x$, \eqref{s35}, \eqref{s34} and interpolation we conclude that 
\begin{align*}
\Bigl \| \la r \ra^{(d-4)/2} f \Bigr \|_{L^{\frac{3d}{2}}_x} \lesssim \| f \|_{\dot W^{\frac{1}{2},\frac{6d}{3d-5}}_x} 
\end{align*}
which implies 
\begin{align*}
\|  f \|_{L^{\frac{3d}{2}}_x} \lesssim \| f \|_{\dot W^{\frac{1}{2},\frac{6d}{3d-5}}_x}.
\end{align*}
This proves the claim.  In particular, $\| u \|_{S(I)} \lesssim \| u \|_{W(I)}$ which along with \eqref{s33}
proves \eqref{s33a}.  Although it may seem redundant to also use the $S(I)$ norm along with the $W(I)$ norm,  it is essential in later 
concentration--compactness arguments to use the weaker norm $\| \cdot \|_{S(I)}$ rather than $\| \cdot \|_{W(I)}$ to measure errors. 

We now use \eqref{s33} to establish an a priori estimate for solutions to \eqref{s31}.  The case $\ell = 1$, $d= 5$
was covered in \cite{cpr}, so we assume that $d \geq 7$.
By the conservation of energy \eqref{s21e}, the Strauss estimate \eqref{s211s}, and 
Hardy's inequality \eqref{s211h} it is easy to show by a 
contraction mapping/time--stepping argument that given $(u_0,u_1) \in \h$, there exists
a unique global solution $\vec u(t) \in C(\R ; \h) \cap L^\infty(\R, \h)$ to \eqref{s31}.  By the Strichartz estimate \eqref{s33a}, we 
have that if 
$u$ solves \eqref{s31}, then for any $I \subseteq \R$, 
\begin{align}
\| u \|_{S(I)} + \| u \|_{W(I)} &\lesssim \| \vec u(0) \|_{\h} + \| N(\cdot, u) \|_{N(I)} \notag \\  
&\lesssim \| \vec u(0) \|_{\h} + \| F(\cdot, u) \|_{N(I)} + \| G(\cdot, u \|_{N(I)}, \label{s36a}
\end{align}
where the nonlinearities $F,G$ are given by \eqref{s26}.  By \eqref{s29} and the relation $d = 2\ell + 3$, we may estimate
\begin{align}\label{s36}
\| G(\cdot, u \|_{N(I)} \lesssim \Bigl \| \la r \ra^{d - 5} u^3 \Bigr \|_{L^1_t L^2_x(I)} \lesssim \| u \|_{S(I)}^3.   
\end{align}
By \eqref{s28a}, \eqref{s28b} and the Strauss estimate \eqref{s211s} we have that 
\begin{align}
\| F(\cdot, u) \|_{N(I)} &\lesssim \Bigl \| 
\left ( \la r \ra^{\ell - 2} \sin 2 Q_{\ell,n}  \right ) u^2
\Bigr \|_{L^{3/2}_t \dot W^{\frac{1}{2},\frac{6d}{3d+5}}_x(I)} + \| F_0 \|_{L^1_t L^2_x(I)} \notag \\
&\lesssim \Bigl \| 
\left ( \la r \ra^{\ell - 2} \sin 2 Q_{\ell,n}  \right ) u^2
\Bigr \|_{L^{3/2}_t \dot W^{\frac{1}{2},\frac{6d}{3d+5}}_x(I)} + \| \vec u \|_{L^\infty_t \h} \| u \|_{S(I)}^3. \label{s37}
\end{align}
By Proposition \ref{pa21} we have 
\begin{align*}
\la r \ra^{\ell - 2} \sin 2 Q_{\ell,n} = O ( \la r \ra^{-3} ), \\
\frac{d}{dr} \Bigl ( \la r \ra^{\ell - 2} \sin 2 Q_{\ell,n} \Bigr ) = O ( \la r \ra^{-4} ),
\end{align*}
so that 
\begin{align}\label{s38}
\Bigl \| \left ( \la r \ra^{\ell - 2} \sin 2 Q_{\ell,n}  \right ) 
\Bigr \|_{L^d_x \cap \dot W^{\frac{1}{2},d}_x} < \infty
\end{align}
by interpolation.  By the Leibniz rule for 
Sobolev spaces (see \cite{coul} for asymptotically conic manifolds) and \eqref{s38}, we conclude that 
\begin{align*}
\Bigl \| 
\left ( \la r \ra^{\ell - 2} \sin 2 Q_{\ell,n}  \right ) u^2
\Bigr \|_{\dot W^{\frac{1}{2},\frac{6d}{3d+5}}_x} 
&\lesssim 
\Bigl \| \left (  \la r \ra^{\ell - 2} \sin 2 Q_{\ell,n}  \right ) 
\Bigr \|_{\dot W^{\frac{1}{2},d}_x} \| u^2 \|_{L^{\frac{6d}{3d-1}}_x} \\
&\:+ \Bigl \| \left (  \la r \ra^{\ell - 2} \sin 2 Q_{\ell,n}  \right ) 
\Bigr \|_{L^d_x} \| u \|_{L^{\frac{3d}{2}}_x} \| u \|_{\dot W^{\frac{1}{2},\frac{6d}{3d-5}}_x} \\
&\lesssim \| u \|^2_{L^{\frac{12d}{3d-1}}_x} + \| u \|_{L^{\frac{3d}{2}}_x} \| u \|_{\dot W^{\frac{1}{2},\frac{6d}{3d-5}}_x}.
\end{align*}
By H\"older's inequality and the fact that $d \geq 7$,
\begin{align*}
\Bigl ( 
\int_\R |u|^{\frac{12d}{3d-1}} \la r \ra^{d-1}
\Bigr )^{\frac{3d -1}{12d}} &\leq \Bigl ( \int_\R \Bigl | \la r \ra^{\frac{d-5}{3}} 
u \Bigr |^6 \la r \ra^{d-1} dr \Bigr )^{\frac{1}{6}} \Bigl ( \int_\R \la r \ra^{\frac{20d - 4d^2}{d-1}} \la r \ra^{d-1}dr \Bigr )
^{\frac{d-1}{3d-1}} \\
&\lesssim 
\Bigl ( \int_\R \Bigl | \la r \ra^{\frac{d-5}{3}} 
u\Bigr |^6 \la r \ra^{d-1} dr \Bigr )^{\frac{1}{6}}. 
\end{align*}
Thus, 
\begin{align*}
\Bigl \| 
\left (  \la r \ra^{\ell - 2} \sin 2 Q_{\ell,n}  \right ) u^2
\Bigr \|_{\dot W^{\frac{1}{2},\frac{6d}{3d+5}}_x} \lesssim 
\Bigl \| \la r \ra^{\frac{d-5}{3}} u \Bigr \|^2_{L^{6}_x} + \| u \|_{L^{\frac{3d}{2}}_x} \| u \|_{\dot W^{\frac{1}{2},\frac{6d}{3d-5}}_x}
\end{align*}
so that by H\"older's inequality in time 
\begin{align}\label{s39}
\Bigl \| 
\left ( \la r \ra^{\ell - 2} \sin 2 Q_{\ell,n}  \right ) u^2
\Bigr \|_{L^{3/2}_t \dot W^{\frac{1}{2},\frac{6d}{3d+5}}_x(I)} \lesssim 
\| u \|_{S(I)}^2 + \| u \|_{S(I)} \| u \|_{W(I)}. 
\end{align}
Combining \eqref{s39} with \eqref{s37} we obtain
\begin{align}\label{s310}
\| F(\cdot, u) \|_{N(I)} \lesssim \| u \|_{S(I)}^2 + \| u \|_{S(I)} \| u \|_{W(I)} + \| \vec u \|_{L^\infty_t \h} 
\| u \|^3_{S(I)}. 
\end{align}
The estimates \eqref{s36a}, \eqref{s36}, and \eqref{s310} imply the following a priori estimate for $u$: 
\begin{align}\label{s311}
\| u \|_{S(I)} + \| u \|_{W(I)} 
\lesssim \| \vec u(0) \|_{\h} + \| u \|_{S(I)}^2 + \| u \|_{S(I)} \| u \|_{W(I)} + \| \vec u \|_{L^\infty_t \h} 
\| u \|^3_{S(I)} + \| u \|_{S(I)}^3.  
\end{align}
Based on \eqref{s311} and continuity arguments we have the following small data theory and long--time perturbation theory 
for \eqref{s31}.  For full details, see the proofs of Proposition 5.1 and Proposition 5.2 respectively in \cite{cpr}. 

\begin{ppn}\label{p32}
	For every $(u_0,u_1) \in \h$, there exists a unique global solution $u$ to \eqref{s31}
	such that $\vec u(t) \in C(\R; \h) \cap L^\infty(\R; \h)$.  A solution $u$ scatters to a free wave on $\M^d$ as $t \rar \infty$, i.e.
	there exists a solution $v_L$ to 
	\begin{align*}
	\p_t^2 v - \p_r^2 v - \frac{(d-1)r}{r^2 + 1} \p_r v = 0, \quad (t,r) \in \R \times \R, 
	\end{align*}
	such that 
	\begin{align*}
	\lim_{t \rightarrow \infty} \| \vec u(t) - \vec v_L^{\pm}(t) \|_{\h} = 0,
	\end{align*}
	if and only if 
	\begin{align*}
	\| u \|_{S(0,\infty)} < \infty. 
	\end{align*}
	A similar characterization of $u$ scattering to a free wave on $\M^d$ as $t \rar -\infty$ also holds.  Moreover, there 
	exists $\delta > 0$ such that if $\| \vec u(0) \|_{\h} < \delta$, then 
	\begin{align*}
	\| \vec u \|_{L^\infty_t \h} + \| u \|_{S(\R)} + \| u \|_{W(\R)} \lesssim \| \vec u(0) \|_{\h}. 
	\end{align*}
\end{ppn}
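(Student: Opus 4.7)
The plan is to leverage the a priori estimate \eqref{s311} derived above, which essentially reduces the proposition to routine bootstrap arguments built on top of the Strichartz framework of Proposition \ref{p31}. I would proceed in three steps, closely mirroring the corotational treatment of Propositions 5.1 and 5.2 in \cite{cpr}.

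First, for the small data scattering statement, I would set $f(T) := \|u\|_{S(0,T)} + \|u\|_{W(0,T)}$ for the local solution emanating from data with $\|\vec u(0)\|_\h \leq \delta$. Rewriting \eqref{s311} on the time interval $(0,T)$ gives
\begin{align*}
f(T) \leq C \delta + C f(T)^2 + C \|\vec u\|_{L^\infty_t \h} f(T)^3.
\end{align*}
Since $f$ is continuous in $T$ and vanishes at $T=0$, a standard continuity argument with $\delta$ chosen sufficiently small forces $f(T) \leq 2C\delta$ uniformly throughout the interval of existence, which simultaneously yields the uniform Strichartz bound on $\R^+$ and, by time reversal, on all of $\R$.

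Second, for arbitrary initial data I would obtain global existence by combining a local contraction argument on a short time interval with the $L^\infty_t \h$ bound coming from the conserved perturbed energy $\cl E_V$, which is comparable to $\|\vec u(t)\|_\h^2$. The Strauss estimate \eqref{s211s} and Hardy's inequality \eqref{s211h} allow the nonlinearity $N(r,u)$ to be handled perturbatively on a time interval whose length depends only on $\|\vec u(0)\|_\h$, so that time-stepping produces the global flow $\vec u \in C(\R;\h) \cap L^\infty(\R;\h)$.

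Third, for the scattering characterization I would use Duhamel combined with \eqref{s311}. If $\|u\|_{S(0,\infty)} < \infty$, partition $(0,\infty)$ into finitely many subintervals on which $\|u\|_{S(I_j)}$ is small, iterate \eqref{s311} on each to upgrade to $\|u\|_{W(0,\infty)} + \|N(\cdot,u)\|_{N(0,\infty)} < \infty$, and then define $\vec v_L$ via the Cauchy integral of the forcing term against the free propagator on $\cl M^d$; applying the Strichartz estimates to the tail Duhamel integral on $[t,\infty)$ forces $\|\vec u(t) - \vec v_L(t)\|_\h \to 0$. Conversely, if $u$ scatters to some $v_L$, then $\|v_L\|_{S(\R)} < \infty$ by free Strichartz, and choosing $T$ so large that $\|\vec u(T) - \vec v_L(T)\|_\h \ll 1$ and invoking a long-time perturbation variant of \eqref{s311} on $[T,\infty)$ yields $\|u\|_{S(T,\infty)} < \infty$; adding the local contribution on $[0,T]$ completes the claim. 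The main obstacle is really not in this proposition: once \eqref{s311} and Proposition \ref{p31} are in hand, nothing beyond standard bootstrap and perturbation machinery is needed, and the serious technical difficulty of the paper lies downstream in the rigidity analysis of Section 4.
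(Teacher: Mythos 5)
Your proposal follows essentially the same route as the paper: derive the a priori estimate \eqref{s311} from the Strichartz framework of Proposition \ref{p31}, then run the standard continuity, contraction/time-stepping, and Duhamel/perturbation arguments exactly as in Propositions 5.1 and 5.2 of \cite{cpr}. One correction to your second step: $\cl E_V$ is conserved only by the homogeneous linear flow (\eqref{s32} with $h \equiv 0$), not by the nonlinear equation \eqref{s31}; the uniform $L^\infty_t \h$ bound instead comes from conservation of the original wave-map energy \eqref{s21e} transferred to the $u$--variable via the norm equivalence \eqref{s210}, which is precisely how the paper phrases the global existence step, and with that substitution your argument goes through unchanged.
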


\begin{ppn}[Long--time perturbation theory]\label{p33}
	Let $A > 0$.  Then there exists $\e_0 = \e_0(A) > 0$ and $C = C(A) > 0$ such that the following holds.  Let $0 < \e
	< \e_0$, $(u_0,u_1) \in \h$, and $I \subseteq \R$ with $0 \in I$.  Assume that $\vec U(t) \in C(I; \h)$ satisfies on $I$
	\begin{align*}
	\p_t^2 U - \Delta_g U + V U = N(\cdot, U) + e, 
	\end{align*}
	such that 
	\begin{align}
	\sup_{t \in I} \| \vec U(t) \|_{\h} + \| U \|_{S(I)} &\leq A, \notag  \\
	\| \vec U(0) - (u_0,u_1) \|_{\h} + \| e \|_{N(I)} &\leq \e.  \label{s312}
	\end{align}
	Then the unique global solution $u$ to \eqref{s31} with initial data $\vec u(0) = (u_0,u_1)$ satisfies
	\begin{align*}
	\sup_{t \in I} \| \vec u(t) - \vec U(t) \|_{\h} + \| u - U \|_{S(I)} \leq C(A) \e. 
	\end{align*}
\end{ppn}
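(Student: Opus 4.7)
The plan is to follow the standard partition-and-iterate strategy used in long-time perturbation results for Kenig--Merle type arguments. First, using the a priori bound $\|U\|_{S(I)} \le A$, I would partition $I$ into finitely many consecutive subintervals $I = \bigcup_{j=1}^J I_j$ (with $J = J(A,\eta)$) chosen so that $\|U\|_{S(I_j)} \le \eta$ for each $j$, where $\eta = \eta(A) > 0$ will be a small constant to be selected later. Setting $w := u - U$, the difference satisfies
\begin{equation*}
\partial_t^2 w - \Delta_g w + V w = \bigl[ N(\cdot, U+w) - N(\cdot, U) \bigr] - e,
\end{equation*}
with $\vec w(0) = \vec u(0) - \vec U(0)$ of size $\le \varepsilon$ in $\mathcal{H}$.

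On each subinterval $I_j$, I would apply the Strichartz--type bound \eqref{s33a} to $w$ and then use the a priori nonlinear estimates developed in the proof of \eqref{s311} to control the difference $N(\cdot, U+w) - N(\cdot, U)$ in the $N(I_j)$ norm. The contributions from $G$ (cubic-like, bounded by $\langle r \rangle^{2\ell-2}|u|^3$) are handled exactly as in \eqref{s36}, yielding an estimate of the form
\begin{equation*}
\| G(\cdot, U+w) - G(\cdot, U) \|_{N(I_j)} \lesssim \bigl( \|U\|_{S(I_j)}^2 + \|w\|_{S(I_j)}^2 \bigr) \|w\|_{S(I_j)}.
\end{equation*}
For the quadratic-type piece of $F$ (the leading term in \eqref{s28a}) one follows the Sobolev-product/H\"older calculation leading to \eqref{s39}, giving a difference bound controlled by $(\|U\|_{S(I_j)} + \|w\|_{S(I_j)})(\|w\|_{S(I_j)} + \|w\|_{W(I_j)})$, plus a cubic remainder from $F_0$ handled as with $G$. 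Combining these with the Strichartz inequality, one obtains on $I_j$ an inequality of the schematic form
\begin{equation*}
\|w\|_{S(I_j)} + \|w\|_{W(I_j)} \le C \|\vec w(t_j)\|_{\mathcal H} + C\,\varepsilon + C\bigl(\eta + \|w\|_{S(I_j)} + \|w\|_{W(I_j)}\bigr)\bigl(\|w\|_{S(I_j)} + \|w\|_{W(I_j)}\bigr),
\end{equation*}
where $t_j$ is the left endpoint of $I_j$.

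Choosing $\eta$ sufficiently small (depending on $A$) and bootstrapping via a continuity argument on $I_j$, the above inequality yields $\|w\|_{S(I_j)} + \|w\|_{W(I_j)} \le C(A)\bigl(\|\vec w(t_j)\|_{\mathcal H} + \varepsilon\bigr)$, and by energy/Strichartz also $\sup_{t\in I_j} \|\vec w(t)\|_{\mathcal H} \le C(A)(\|\vec w(t_j)\|_{\mathcal H} + \varepsilon)$. Iterating this across the $J = J(A)$ subintervals, the error grows at most by a multiplicative constant at each step, so
\begin{equation*}
\sup_{t \in I} \|\vec w(t)\|_{\mathcal H} + \|w\|_{S(I)} \le C(A)^J \varepsilon,
\end{equation*}
which is the claim once $\varepsilon_0 = \varepsilon_0(A)$ is chosen small enough to justify the bootstrap at every stage. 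The main obstacle is the delicate handling of the quadratic piece of $F$: unlike a pure energy-critical power nonlinearity, the difference $F(\cdot, U+w) - F(\cdot, U)$ produces a cross term that must be estimated using the Leibniz rule in the fractional Sobolev space $\dot W^{1/2, \frac{6d}{3d+5}}$, and one must be careful that the weighted norm involving $\sin 2Q_{\ell,n}$ provides just enough decay (via \eqref{s38}) to close the estimate without loss. The interplay between the $S$ and $W$ norms in the bootstrap, as in the a priori estimate \eqref{s311}, is essential: the weaker $S$ norm carries the partition smallness, while $W$ controls the fractional regularity needed for the product estimate.
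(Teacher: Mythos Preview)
Your proposal is correct and follows essentially the same approach the paper indicates: the paper does not spell out a proof but states that Proposition~\ref{p33} follows from the a priori estimate \eqref{s311} and standard continuity arguments, referring to Proposition~5.2 of \cite{cpr} for full details, which is precisely the partition-and-bootstrap scheme you describe. One small imprecision: the remainder $F_0$ is quartic rather than cubic (see \eqref{s28b}), but after invoking the Strauss estimate to absorb one factor into $\|\vec U\|_{L^\infty_t\mathcal H}+\|\vec w\|_{L^\infty_t\mathcal H}\lesssim A + \|\vec w\|_{L^\infty_t\mathcal H}$ it is handled exactly as you say, so this does not affect the argument.
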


\subsection{Concentration--Compactness}

In this subsection we complete the second step of the concentration--compactness/rigidity method outlined 
in the beginning of this section.  A crucial tool
used in completing this step is the following linear \emph{profile decomposition} of a bounded sequence 
in $\h$.  

\begin{lem}[Linear Profile Decomposition]\label{l34}
	Let $\{ (u_{0,n}, u_{1,n}) \}_n$ be a bounded sequence in $\h$.  Then 
	after extraction of subsequences and relabeling, there exist a sequence of solutions $\left \{ U_L^j \right \}_{j \geq 1}$ to 
	\eqref{s32} with $h \equiv 0$ which are bounded in $\h$ and a sequence of times  $\{ t_{j,n} \}_n$ for $j \geq 1$ that 
	satisfy the orthogonality condition 
	\begin{align*}
	\forall j \neq k, \quad \lim_{n \rar \infty} |t_{j,n} - t_{k,n}| = \infty, 
	\end{align*}
	such that for all $J \geq 1$, 
	\begin{align*}
	(u_{0,n},u_{1,n}) = \sum_{j = 1}^J \vec U^j_L(-t_{j,n}) + (w^J_{0,n},w^J_{1,n}), 
	\end{align*}
	where the error $w_n^J(t) := S_V(t)(w^J_{0,n},w^J_{1,n})$ satisfies
	\begin{align}
	\lim_{J \rar \infty} \lims_{n \rar \infty} \| w^J_n \|_{L^\infty_t L^r_x(\R) \cap S(\R)} = 0, 
	\quad \forall \: \frac{2d}{d-2} < r < \infty. \label{s313}
	\end{align}
	
	Moreover, we have the following Pythagorean expansion of the energy 
	\begin{align}\label{e314}
	\cl E_V( \vec u_n) = \sum_{j = 1}^J \cl E_V( \vec U^j_L ) + \cl E_V ( \vec w^J_n ) + o(1),
	\end{align}
	as $n \rar \infty$. 
\end{lem}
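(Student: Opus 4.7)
The plan is a Bahouri--Gérard-type iterative extraction of profiles, adapted to the linear flow $S_V(t)$ generated by $\p_t^2 - \Delta_g + V$ on $\M^d$. Because the wormhole geometry and the potential $V$ break both the scaling and spatial translation symmetries, the only free concentration parameter is the time shift $t_{j,n}$—a significant simplification over the classical flat-space construction. I would extract profiles inductively. At stage $J$, given the remainder $(w^J_{0,n}, w^J_{1,n})$, define
\[
\Lambda_J := \lims_{n \rar \infty} \sup_{\tau \in \R} \| S_V(\tau) (w^J_{0,n}, w^J_{1,n}) \|_{L^{r_0}_x}
\]
for a fixed $r_0 \in (\tfrac{2d}{d-2}, \infty)$ at which local Rellich-type compactness is available on $\M^d$. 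If $\Lambda_J = 0$ the iteration terminates; otherwise pick $t_{J+1,n}$ nearly realizing the supremum and extract, along a subsequence, a weak limit of $S_V(t_{J+1,n})(w^J_{0,n}, w^J_{1,n})$ in $\h$. This weak limit is nonzero because the $L^{r_0}_x$ norm is bounded below by $\Lambda_J/2$ and local compactness prevents mass from disappearing entirely; it serves as the Cauchy data for the new profile $\vec U^{J+1}_L$. Set $(w^{J+1}_{0,n}, w^{J+1}_{1,n}) := (w^J_{0,n}, w^J_{1,n}) - \vec U^{J+1}_L(-t_{J+1,n})$ and iterate.

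The orthogonality condition $|t_{j,n} - t_{k,n}| \rar \infty$ for $j \neq k$ is verified by contradiction: if a subsequence gave $t_{j,n} - t_{k,n} \rar \tau_\infty \in \R$, the extraction at stage $k$ would already have absorbed $\vec U^j_L$ shifted by $\tau_\infty$, violating the near-maximality in the construction at stage $j$. The Pythagorean expansion of $\cl E_V$ then follows by expanding the quadratic form and showing that cross terms vanish as $n \rar \infty$: pairings of $\vec U^j_L(-t_{j,n})$ with $\vec U^k_L(-t_{k,n})$ for $j \neq k$ vanish by the time orthogonality combined with dispersive decay of $S_V$ (after reducing to compactly supported profiles by density and using the coercivity $\cl E_V \simeq \| \cdot \|_\h^2$), while pairings of $\vec U^j_L(-t_{j,n})$ with $\vec w^J_n$ vanish because $w^J_n$ was built so that $S_V(t_{j,n}) (w^J_{0,n}, w^J_{1,n})$ converges weakly to zero for each $j \leq J$.

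The main obstacle is establishing the vanishing $\lim_{J \rar \infty} \lims_{n \rar \infty} \| w^J_n \|_{L^\infty_t L^r_x(\R) \cap S(\R)} = 0$. The Pythagorean identity together with the uniform bound $\sup_n \| (u_{0,n}, u_{1,n}) \|_\h < \infty$ gives $\sum_j \| \vec U^j_L \|_\h^2 \lesssim 1$, so $\| \vec U^j_L \|_\h \rar 0$, which forces $\Lambda_J \rar 0$ via the near-maximal choice at each stage. I would then interpolate between the uniform Strichartz bound $\| S_V(\cdot) (w^J_{0,n}, w^J_{1,n}) \|_{S(\R)} \lesssim \| (w^J_{0,n}, w^J_{1,n}) \|_\h$ furnished by Proposition \ref{p31} and the vanishing in the single Lebesgue exponent $r_0$ provided by $\Lambda_J \rar 0$, to obtain vanishing throughout the admissible range. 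The delicate point, which I expect to absorb most of the technical work, is ruling out loss of compactness at the two asymptotically Euclidean ends $r = \pm \infty$; here the Strauss estimates \eqref{s211s} and the repulsive asymptotics of $V$ in \eqref{s27} enter, ensuring that bounded sequences in $\h$ are tight at infinity and that the weak limits extracted at each stage are genuinely attained in $\h$.
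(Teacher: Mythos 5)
Your construction of the profiles themselves (iterative extraction using only time translations, the orthogonality of the $t_{j,n}$ by near-maximality, and the Pythagorean expansion via weak convergence plus time orthogonality) is essentially the route the paper takes, which simply invokes the exterior wave-map decomposition of Lemma 3.2 in \cite{ls}; that part of your sketch, including the remark that the broken scaling/translation symmetries leave only time shifts, is consistent with the paper and gives the statements \eqref{e314} and the vanishing of $\| w^J_n \|_{L^\infty_t L^r_x(\R)}$ for $\frac{2d}{d-2} < r < \infty$.

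The genuine gap is in your final step, the passage from this $L^\infty_t L^{r_0}_x$ smallness to the vanishing of $\| w^J_n \|_{S(\R)}$ required in \eqref{s313}. You propose to ``interpolate between the uniform Strichartz bound $\| S_V(\cdot)(w^J_{0,n},w^J_{1,n}) \|_{S(\R)} \lesssim \| (w^J_{0,n},w^J_{1,n}) \|_{\h}$ and the vanishing in the single Lebesgue exponent $r_0$,'' but that interpolates the wrong pair of norms: interpolating a merely bounded $S$-norm against a vanishing $L^\infty_t L^{r_0}_x$-norm yields vanishing only of norms strictly between the two, never of the $S$-norm itself. To conclude \eqref{s313} one must dominate the $S$-norm by an interpolation of a \emph{strictly stronger} admissible Strichartz norm (bounded by the energy via Proposition \ref{p31}) against $L^\infty_t L^r_x$, and manufacturing the necessary slack is exactly the content of the paper's argument: the weighted Strauss/Sobolev embedding behind \eqref{s34a} in fact holds with the improved weights \eqref{e317}, and the triple underlying the $W$-norm is admissible with strict inequality \eqref{e318} when $d \geq 7$ (the case $d=5$ being handled in \cite{cpr}), so for $r$ large and $\theta$ close to $1$ one can choose $(p,q,\gamma)$ and the intermediate exponent $s$ as in \eqref{e319} with $\gamma = \frac{1}{2\theta} > \frac{1}{2}$ so that \eqref{e320} holds, giving $\| w^J_n \|_{S(\R)} \lesssim \| w^J_n \|_{L^3_t \dot W^{\frac12,s}_x(\R)} \lesssim \| w^J_n \|_{\h}^{\theta} \| w^J_n \|_{L^\infty_t L^r_x(\R)}^{1-\theta}$. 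Without this step your argument does not close. Relatedly, your claim of ``vanishing throughout the admissible range'' is too strong: the paper explicitly notes it is unclear whether $\| w^J_n \|_{W(\R)} \to 0$, which is precisely why the weaker $S$-norm (and not $W$) is the one used to measure errors in the perturbation theory.
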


The proof is exactly the same as the corototational case which follows from the proof of Lemma 3.2 in \cite{ls}.  However, 
we will explain why the error $w_n^J$ satisfies \eqref{s313} since the reasoning is subtle.  The $d = 5$ case is contained in \cite{cpr}, so 
we assume that $d \geq 7$.  The proof from Lemma 3.2 in \cite{ls} shows that we have 
\begin{align}\label{e315}
\lim_{J \rar \infty} \lims_{n \rar \infty} \| w^J_n \|_{L^\infty_t L^r_x(\R)} = 0, \quad \forall \: \frac{2d}{d-2} < r < \infty,
\end{align}
as well as 
\begin{align}\label{e316}
\lims_{J \rar \infty} \lims_{n \rar \infty} \| \vec w^J_n \|_{\h} < \infty. 
\end{align}
We recall that in proving \eqref{s34a}, we in fact proved the stronger claim that  
\begin{align}\label{e317}
\Bigl \| \la r \ra^{(d-4)/2} f \Bigr \|_{L^{\frac{3d}{2}}_x} + \Bigl \| \la r \ra^{(d-4)/3} f \Bigr \|_{L^6_x} \lesssim \| f \|_{\dot W^{\frac{1}{2},\frac{6d}{3d-5}}_x}.
\end{align}
We also observe that the admissible triple $\Bigl ( \frac{1}{2}, 3, \frac{6d}{3d - 5} \Bigr )$ is not sharp if 
$d \geq 7$, i.e. 
\begin{align}\label{e318}
\frac{1}{3} < \frac{d-1}{2} \Bigl ( \frac{1}{2} - \frac{3d - 5}{6d} \Bigr ).
\end{align}
The two observations \eqref{e317} and \eqref{e318} and continuity imply the following.  Let $r > \frac{2d}{d-2}$
and $0 < \theta < 1$, and define a triple $(p,q,\gamma)$ and exponent $s$ by 
\begin{align}
\begin{split}\label{e319}
\gamma &= \frac{1}{\theta} \frac{1}{2}, \\
\frac{1}{p} &= \frac{1}{\theta} \frac{1}{3},  \\
\frac{1}{p} + \frac{d}{q} &= \frac{d}{2} - \gamma, \\
\frac{1}{s} &= \theta \frac{1}{q} + (1 - \theta) \frac{1}{r}.
\end{split} 
\end{align}
Then as long as $r$ is sufficiently large and $\theta$ is sufficiently close to 1, we have that
$(p,q,\gamma)$ is admissible, $s > 1$, and 
\begin{align}\label{e320}
\|  f \|_{L^{\frac{3d}{2}}_x} + \Bigl \| \la r \ra^{(d-5)/3} f \Bigr \|_{L^6_x} \lesssim \| f \|_{\dot W^{\frac{1}{2}, s}_x }, \quad \forall
f \in C^\infty_0. 
\end{align}
Indeed, the fact that $(p,q,\gamma)$ defined by \eqref{e319} are admissible for $r$ large and $\theta$ close to 1 follows from 
\eqref{e318} and continuity in $\theta$.  Similarly, by \eqref{e317} if $r$ is large and $\theta$ is close to 1, then we can find 
$m_0 = m_0(\theta)$ and  $m_1 = m_1(\theta)$, analogous to $m_0, m_1$ from \eqref{s34}, so that 
\begin{align*}
\frac{1}{2} \cdot \frac{1}{3} + \frac{1}{2} \cdot \frac{1}{m_0} &= \frac{1}{s}, \\
\frac{1}{2} \cdot \frac{4}{3d} + \frac{1}{2} \cdot \frac{1}{m_1} &= \frac{1}{s},  
\end{align*}
with 
\begin{align*}
|f(r)| &\lesssim \| f \|_{\dot W^{1,m_0}} \la r \ra^{-\al}, \\
|f(r)| &\lesssim \| f \|_{\dot W^{1,m_1}} \la r \ra^{-\beta},
\end{align*}
where $\al > \frac{2(d-5)}{3}$ and $\beta > 0$.  By interpolation we conclude \eqref{e320}. We now fix $r$ sufficiently large
and $\theta$ sufficiently close to 1 so that if $(p,q,\gamma)$ and $s$ are defined as in \eqref{e319}, then 
$(p,q,\gamma)$ is an admissible triple and \eqref{e320} holds.  Then by \eqref{e320}, interpolation,
and Strichartz estimates we have that the errors satisfy 
\begin{align*}
\|  w^J_n \|_{S(\R)}
&\lesssim \| w^J_n \|_{L^3_t \dot W^{\frac{1}{2}, s}_x (\R)} \\
&\lesssim \| w^J_n \|_{L^p_t \dot W^{\gamma, q}_x(\R)}^\theta \| w^J_n \|_{L^\infty_t L^r_x(\R)}^{1 - \theta} \\
&\lesssim \| w^J_n \|_{\h}^{\theta} \| w^J_n \|_{L^\infty_t L^r_x(\R)}^{1 - \theta} 
\end{align*}
whence by \eqref{e315} and \eqref{e316}
\begin{align*}
\lim_{J \rar \infty} \lims_{n \rar \infty} \| w^J_n \|_{S(\R)} = 0 
\end{align*}
as desired. We remark here that is unclear whether or not the errors satisfy the stronger 
condition $\lim_J \lims_n \| w^J_n \|_{W(\R)} = 0$.  It is for this reason that we used the weaker $S(I)$ norm
in the previous subsection.  

Using Lemma \ref{l34} and Proposition \ref{p33}, we establish that if our main result, Theorem \ref{t21}, fails, then 
there exists a nonzero \lq critical element.'   In particular, we establish the following. 

\begin{ppn}\label{p34}
	Suppose that Theorem \ref{t21} fails.  Then there exists a nonzero global solution $u_*$ to \eqref{s31} such that the 
	set 
	\begin{align*}
	K = \left \{ \vec u_*(t) : t \in \R \right \}
	\end{align*}
	is precompact in $\h$.  
\end{ppn}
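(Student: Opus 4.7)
The plan is to follow the Kenig--Merle concentration--compactness scheme \cite{km06} \cite{km08}, using the linear profile decomposition of Lemma \ref{l34} together with the long--time perturbation result Proposition \ref{p33}. First I would introduce a critical energy threshold
\begin{align*}
E_c := \inf\{ \cl E_V(u_0,u_1) : (u_0,u_1) \in \h \mbox{ and the solution to \eqref{s31} with this data fails to scatter forward in time}\}.
\end{align*}
Proposition \ref{p32} gives $E_c > 0$, and the assumption that Theorem \ref{t21} fails gives $E_c < \infty$. Choose a sequence $\{(u_{0,n},u_{1,n})\}$ in $\h$ with $\cl E_V(u_{0,n},u_{1,n}) \rar E_c$ whose associated solutions $u_n$ each fail to scatter forward. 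Applying Lemma \ref{l34} yields linear profiles $U^j_L$, time parameters $t_{j,n}$, and remainders $w^J_n$ satisfying \eqref{s313} and \eqref{e314}. For each $j$ I would associate a nonlinear profile $U^j$, namely the global solution to \eqref{s31} whose nonlinear evolution best approximates $U^j_L(\cdot - t_{j,n})$ near $t = t_{j,n}$ --- obtained by solving a standard Cauchy problem when $t_{j,n}$ is bounded, and by the scattering characterization of Proposition \ref{p32} when $|t_{j,n}| \rar \infty$.

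Next I would invoke the Pythagorean expansion \eqref{e314} and the minimality of $E_c$ to reduce the decomposition to a single nontrivial profile. If two or more profiles were nonzero, \eqref{e314} would force each $\cl E_V(\vec U^j_L) < E_c$; by the time--reversal symmetry of \eqref{s31}, the threshold $E_c$ equally governs backward scattering, so each nonlinear profile $U^j$ scatters in both time directions and $\|U^j\|_{S(\R)} < \infty$. The pseudo--orthogonality $|t_{j,n} - t_{k,n}| \rar \infty$ combined with the $S$--norm smallness of the linear remainder in \eqref{s313} would then permit me to form the approximate nonlinear solution
\begin{align*}
\widetilde U^J_n(t) := \sum_{j \leq J} U^j(t - t_{j,n}) + w^J_n(t),
\end{align*}
whose $S(0,\infty)$--norm is uniformly bounded for $J, n$ large. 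Feeding $\widetilde U^J_n$ into Proposition \ref{p33} would yield $\|u_n\|_{S(0,\infty)} < \infty$, contradicting non--scattering of $u_n$. Hence exactly one profile $U^1$ is nonzero and $\|(w^1_{0,n}, w^1_{1,n})\|_\h \rar 0$.

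To extract the critical element I would then rule out $t_{1,n} \rar \pm\infty$ via one more perturbation argument: for instance, when $t_{1,n} \rar +\infty$ the linear evolution of $(u_{0,n},u_{1,n})$ on $(0,\infty)$ agrees up to an $\h$--vanishing error with $\vec U^1_L(\cdot - t_{1,n})$, whose $S(0,\infty)$--norm is uniformly bounded in $n$; together with the nonlinear profile construction this lets Proposition \ref{p33} conclude $\|u_n\|_{S(0,\infty)} < \infty$ for large $n$, again a contradiction. Hence $t_{1,n}$ is bounded, and after passing to a subsequence with $t_{1,n} \rar t_\infty \in \R$ I set $u_*(t) := U^1(t - t_\infty)$. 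Then $u_*$ is a global solution of \eqref{s31} with energy exactly $E_c$ that does not scatter forward in time; the analogous minimality analysis applied to the time--reversed solution $u_*(-\cdot)$ shows it does not scatter backward either. To verify precompactness of $K = \{\vec u_*(t) : t \in \R\}$, I would apply Lemma \ref{l34} to an arbitrary sequence $\vec u_*(\tau_n)$: each time--translate $u_*(\tau_n + \cdot)$ is again a non--scattering solution of energy $E_c$ in both directions, so the minimality analysis above forces a single nontrivial profile with vanishing $\h$--error and bounded time parameter, giving an $\h$--convergent subsequence of $\vec u_*(\tau_n)$.

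The main obstacle in the scheme is verifying the nonlinear superposition estimate and the applicability of Proposition \ref{p33} using only the $S$--norm smallness of the linear remainders provided by \eqref{s313}, rather than the stronger $W$--norm control one might naively expect to need; this is precisely why the small data theory and the perturbation Proposition \ref{p33} were carefully formulated with the $S$--norm as the controlling scattering norm for measuring errors.
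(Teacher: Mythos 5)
Your overall scheme (critical energy $E_c$, minimizing sequence, Lemma \ref{l34} plus Proposition \ref{p33} with errors measured only in the $S$--norm, reduction to a single profile, extraction of $u_*$) is the same Kenig--Merle argument that the paper invokes via Proposition 5.3 of \cite{cpr} (following \cite{ls}), and you correctly identified the $S$--versus--$W$ norm point. However, there is a genuine gap at the step where you pass from ``$u_*$ does not scatter forward'' to the precompactness of $K=\{\vec u_*(t):t\in\R\}$ for \emph{all} $t\in\R$. You assert that ``the analogous minimality analysis applied to the time--reversed solution $u_*(-\cdot)$ shows it does not scatter backward either,'' but minimality gives no such thing: the time--reversed solution has energy exactly $E_c$, and the definition of $E_c$ only controls solutions with energy strictly below $E_c$. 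A minimal--energy forward--non--scattering solution could a priori scatter backward, and such a solution cannot satisfy the conclusion of Proposition \ref{p34}: if it scattered backward to a nonzero free wave, the trajectory would disperse weakly as $t\rar-\infty$ and could not be precompact (and if the backward radiation were zero, $\|\vec u_*(t)\|_{\h}\rar 0$ and Proposition \ref{p32} would force forward scattering, a contradiction). The same omission appears earlier in your case analysis of the profile times: with a minimizing sequence that is only known to fail \emph{forward} scattering, only one of the two regimes $t_{1,n}\rar\pm\infty$ is contradictory; in the other regime the perturbation argument merely shows the $u_n$ scatter backward, which is not a contradiction, so you cannot conclude that $t_{1,n}$ is bounded, nor can you exclude the unbounded--time cases when you later apply the decomposition to $\vec u_*(\tau_n)$.

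The standard repair, which is what the proof the paper cites actually does, is to renormalize the time origin along the minimizing sequence before extracting profiles: since each $u_n$ satisfies $\|u_n\|_{S(\R)}=\infty$ and the $S$--norm on $(-\infty,\sigma]$ is finite and continuous, monotone in $\sigma$, one may choose $\sigma_n$ so that $\|u_n\|_{S((-\infty,\sigma_n])}\geq n$ and $\|u_n\|_{S([\sigma_n,\infty))}\geq n$, and then run the argument on $u_n(\cdot+\sigma_n)$, which has the same energy. With both half--line norms diverging, \emph{both} regimes $t_{1,n}\rar+\infty$ and $t_{1,n}\rar-\infty$ are excluded by Proposition \ref{p33} (each would make one of the two half--line $S$--norms of $u_n(\cdot+\sigma_n)$ uniformly bounded), so $t_{1,n}$ is bounded, and the resulting critical element satisfies $\|u_*\|_{S((-\infty,0])}=\|u_*\|_{S([0,\infty))}=\infty$. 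It is exactly this two--sided non--scattering that lets you treat an arbitrary sequence of translates $\vec u_*(\tau_n)$ (every translate again fails in both directions, so both unbounded--time cases are contradictory) and obtain precompactness of $K$ over all of $\R$, which is what Proposition \ref{p51} requires. Without this device (or an equivalent one) your construction only yields a critical element with precompact \emph{forward} trajectory, which does not prove the statement.
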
 

The proof of Proposition \ref{p34} is the same as in the corototational case; see the proof of Proposition 5.3 in \cite{cpr}
for full details.  We remark that proving Proposition \ref{p34} uses the nonlinear perturbation theory, Proposition \ref{p33}, applied to 
the linear profile decompositions provided by Lemma \ref{l34}.  What makes this possible is that the perturbation theory is 
established with certain errors measured in the weaker 
norm $\| \cdot \|_{S(\R)}$ (see \eqref{s312}) and the errors $w^J_n$ in the linear profile decomposition satisfy 
$\lim_J \lims_n \| w^J_n \|_{S(\R)} = 0$ (but possibly not $\lim_J \lims_n \| w^J_n \|_{W(\R)} = 0$).   




\section{Rigidity Theorem}

In this section we prove that the critical element from Proposition \ref{p34} does not exist and conclude the proof of our main 
result Theorem \ref{t21} (equivalently Theorem \ref{t01}).  The main result of this section
is the following. 

\begin{ppn}\label{p51} 
	Let $u$ be a global solution of \eqref{s31} such that the trajectory
	\begin{align*}
	K = \{ \vec u(t) : t \in \R \}
	\end{align*}
	is precompact in $\mathcal H := \h(\R; \la r \ra^{d-1} dr)$.  Then $\vec u = (0,0)$.  
\end{ppn}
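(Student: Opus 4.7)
The plan is to show that any $u$ satisfying the hypotheses of the proposition must be a static solution to \eqref{s31}. Once that is established, $\psi := Q_{\ell,n} + \la r \ra^\ell u$ is a finite-energy harmonic map of degree $n$, and the uniqueness clause of Proposition \ref{pa21} forces $\psi \equiv Q_{\ell,n}$, i.e.\ $u \equiv 0$. The route to staticity follows the channels-of-energy methodology of \cite{dkm4}, adapted to the present asymptotically Euclidean setting along the lines of \cite{kls1,klls2}.

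First I would prove a linear channels-of-energy lower bound for the flow $S_V(t)$ generated by $\p_t^2 - \Delta_g + V$. The spectral information used in Section 4 of \cite{cpr}---namely that $-\Delta_g + V$ is a nonnegative self-adjoint operator on $L^2(\cl M^d)$ with neither zero eigenvalue nor zero resonance---together with the decay \eqref{s27} of $V$ and a distorted Fourier analysis, should yield
\begin{align*}
\sum_{\pm} \lim_{t \rar \pm \infty} \| \vec v_L(t) \|_{\h(|r| \geq R + |t|)} \gtrsim \| \pi_R^\perp \vec v_L(0) \|_{\h(|r| \geq R)}
\end{align*}
for every $R > 0$ and every solution $v_L(t) = S_V(t)(v_0,v_1)$, where $\pi_R^\perp$ denotes the orthogonal projection off a (possibly trivial) finite-dimensional subspace of static data on $\{|r| \geq R\}$. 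Establishing this estimate simultaneously on both asymptotic ends of $\cl M^d$ is the central technical step: the two-ended nature of the wormhole forces a matching of distorted Fourier representations through the throat at $r = 0$, a feature absent from the one-ended exterior wave maps problem of \cite{klls2}.

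Next I would transfer the linear estimate to the nonlinear solution $u$. Precompactness of $K$ in $\h$ gives a uniform tail bound $\| \vec u(t) \|_{\h(|r| \geq R)} \rar 0$ as $R \rar \infty$, and the Strauss estimate \eqref{s211s} produces strong pointwise decay of $u(t,r)$ for $|r|$ large. Together with the bounds \eqref{s28a}--\eqref{s29} on the nonlinearity, this makes $N(r,u)$ a harmless perturbation of the linear flow in the exterior region $\{|r| \geq R + |t|\}$. Inserting $\vec u$ into the channels-of-energy estimate and invoking finite speed of propagation then forces $\pi_R^\perp \vec u(0) = 0$ in $\h(|r| \geq R)$ for all sufficiently large $R$, so that $\vec u(0)$ coincides with a static profile on each exterior end. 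Running the same argument after time translation yields the analogous statement for $\vec u(t)$ at every $t \in \R$.

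Finally I would upgrade this exterior rigidity to a global statement. By Proposition \ref{pa22} (translated to the $u$-formulation), the space of static solutions to \eqref{s31} with prescribed leading behavior at a single end of $\cl M^d$ is one-dimensional, so the exterior profiles extend uniquely through the throat to a global static $U(r)$ that agrees with $u(t,\cdot)$ outside a compact set, independent of $t$. A standard finite-propagation-plus-compactness argument (or a direct energy estimate on $u(t,\cdot) - U$ in $|r| \leq R$, exploiting the uniform-in-$t$ vanishing of this difference on $|r| > R$) then forces $u(t,r) = U(r)$ for all $(t,r) \in \R \times \R$, so $u$ is a global static solution. Hence $\psi = Q_{\ell,n} + \la r \ra^\ell U \in \cl E_{\ell,n}$ solves \eqref{sa21}, and Proposition \ref{pa21} yields $\psi = Q_{\ell,n}$, i.e.\ $u \equiv 0$. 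The principal obstacle will be the two-ended channels-of-energy estimate itself, where sharp exterior propagation bounds must be proved for $-\Delta_g + V$ on a manifold with two asymptotically Euclidean ends and where any finite-dimensional kernel of $\pi_R^\perp$ arising from the potential must be identified and handled separately.
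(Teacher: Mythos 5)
Your high-level skeleton (show $u$ is static via channels of energy, then invoke uniqueness of $Q_{\ell,n}$ through Proposition \ref{pa21}) is the same as the paper's, but the two central steps of your outline are exactly where the gap lies. First, you propose to prove a channels-of-energy lower bound directly for the perturbed flow $S_V(t)$ of $\p_t^2-\Delta_g+V$ on the two-ended wormhole, with $\pi_R^\perp$ projecting off some finite-dimensional space of ``static data.'' The paper does not do this, and for good reason: no such two-ended perturbed estimate is available or needed. Instead, since only the exterior regions $\{\pm r \geq R+|t|\}$ matter, the paper works on each end separately, sets $u_e = \la r\ra^{(d-1)/2} r^{-(d-1)/2} u$ on $r>0$, and observes that $u_e$ solves a radial wave equation on flat $\R^{1+d}$ (exterior to a small ball) with a potential $V_e=O(r^{-4})$ and controlled nonlinearity. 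One then uses the \emph{free} exterior energy estimate of \cite{klls1} (Proposition \ref{p54}), whose kernel $P(R)$ is the explicit span of $(r^{2i-d},0),(0,r^{2j-d})$, and treats $V_e$ and $N_e$ as exterior perturbations after truncation (Lemma \ref{l57}). Your ``matching of distorted Fourier representations through the throat'' is a problem the paper never has to face, and as stated it is an unproven major step, including the identification of the kernel of your projection.

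Second, your assertion that the channels estimate plus finite speed of propagation ``forces $\pi_R^\perp \vec u(0)=0$ for all large $R$, so $\vec u(0)$ coincides with a static profile on each end'' skips the heart of the proof. Because the kernel $P(R)$ is nontrivial (of dimension $\tilde k + k$, growing with $d$) one only obtains the inequality of Lemma \ref{l55}, bounding $\|\pi_R^\perp \vec u_e(t)\|$ by powers of $R^{-1}$ times norms of $\pi_R\vec u_e(t)$; the elements of $P(R)$ are not static solutions of \eqref{s31}, so vanishing of $\pi_R^\perp\vec u_e$ alone says nothing about staticity. The bulk of the paper's Section 4 is the induction on the projection coefficients $\lam_i,\mu_j$ (with separate treatment of $\ell$ even and odd) that pins down the asymptotics $u_e(0,r)=\al r^{2-d}+O(r^{-d})$ (Proposition \ref{p58}), followed by the compact-support argument when $\al=0$ (Lemmas \ref{l529}--\ref{l530}) and, when $\al\neq 0$, subtraction of the one-sided harmonic map $Q_{\al-\al_{\ell,n}}$ supplied by Proposition \ref{pa22} (Lemma \ref{l531}). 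Your final step also differs from the paper in a smaller way: the paper does not extend the exterior static profiles ``uniquely through the throat,'' but rather, having shown $\vec u(t,r)=(U_\pm(r),0)$ for $\pm r>0$ and all $t$, concludes via a weak-solution/elliptic regularity argument (proof of Proposition \ref{static soln}) that $u(0,\cdot)$ solves the static equation on all of $\R$. In short, the plan is directionally right but the two pivotal estimates you defer---the two-ended perturbed channels estimate and the passage from the exterior estimate to staticity---are respectively avoidable-and-unproven and genuinely insufficient as stated.
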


We first note that for a solution $u$ as in Proposition \ref{p51}, we have the following uniform control of the energy 
in exterior regions. 

\begin{lem}\label{l52} 
	Let $u$ be as in Proposition \ref{p51}.  Then we have 
	\begin{align}
	\begin{split}\label{s51}
	\forall R \geq 0, \quad \lim_{|t| \rar \infty} \| \vec u(t) \|_{\h(|r| \geq R + |t|; \la r \ra^{d-1} dr )} &=  0, \\
	\lim_{R \rar \infty} \left [ \sup_{t \in \R} \| \vec u(t) \|_{\h(|r| \geq R + |t|; \la r \ra^{d-1} dr)} \right ] &= 0.
	\end{split}
	\end{align}
\end{lem}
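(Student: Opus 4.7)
The plan is to deduce both limits in \eqref{s51} from a single tightness property implied by the precompactness of $K$ in $\h$. Specifically, I would first establish the following: for every $\e > 0$ there exists $R_\e > 0$ such that
\[
\sup_{t \in \R} \| \vec u(t) \|_{\h(|r| \geq R_\e; \la r \ra^{d-1} dr)} < \e.
\]
Since $K$ is precompact in $\h$, it is totally bounded, so we may cover $K$ by finitely many $\h$-balls of radius $\e/2$ centered at fixed elements $\vec U_1, \ldots, \vec U_N \in \h$. For each $\vec U_i$, the $\h$-norm on the exterior region $\{|r| \geq R\}$ tends to $0$ as $R \rar \infty$ by dominated convergence (equivalently, by density of $C^\infty_0$ pairs in $\h$). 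Choose $R_i$ with $\| \vec U_i \|_{\h(|r| \geq R_i; \la r \ra^{d-1}dr)} < \e/2$ and set $R_\e := \max_i R_i$. For any $t \in \R$, pick an index $i$ with $\| \vec u(t) - \vec U_i \|_\h < \e/2$; restricting the triangle inequality to the exterior region gives $\| \vec u(t) \|_{\h(|r| \geq R_\e; \la r \ra^{d-1} dr)} < \e$.

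With tightness in hand, both statements in \eqref{s51} follow at once. For the uniform-in-$t$ limit as $R \rar \infty$, given $\e > 0$ choose $R_\e$ as above; then for every $R \geq R_\e$ and every $t \in \R$ the set $\{|r| \geq R + |t|\}$ is contained in $\{|r| \geq R_\e\}$, so
\[
\sup_{t \in \R} \| \vec u(t) \|_{\h(|r| \geq R + |t|; \la r \ra^{d-1} dr)} < \e.
\]
For the first limit, fix $R \geq 0$ and $\e > 0$ and again choose $R_\e$. Once $|t| \geq \max(0, R_\e - R)$ the same inclusion $\{|r| \geq R + |t|\} \subset \{|r| \geq R_\e\}$ holds, so $\| \vec u(t) \|_{\h(|r| \geq R + |t|; \la r \ra^{d-1} dr)} < \e$ for all such $t$, which gives the desired decay as $|t| \rar \infty$.

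The content of the lemma is therefore purely the translation of precompactness in $\h$ into uniform decay of the spatial tails of the family $\{ \vec u(t) \}_{t \in \R}$; it does not invoke the equation \eqref{s31} at all. There is accordingly no substantive obstacle in this step. The genuinely hard work comes afterward: leveraging this exterior control together with channel-of-energy arguments and the asymptotic analysis of \eqref{sa21} from Proposition \ref{pa22} to force $\vec u \equiv 0$, which is where the rigidity proof will actually meet resistance.
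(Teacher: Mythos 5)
Your argument is correct and is exactly the standard tightness-from-precompactness argument that the paper (which states Lemma \ref{l52} without proof, deferring to the corotational case in \cite{cpr}) relies on: cover the precompact trajectory by finitely many balls, use that fixed elements of $\h$ have vanishing tails, and note that both limits in \eqref{s51} follow since $\{|r| \geq R + |t|\}$ is eventually contained in $\{|r| \geq R_\e\}$. No gap; this matches the intended proof.
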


To prove that $\vec u = (0,0)$, we proceed as in the corotational case \cite{cpr} and show that $u$ is a finite energy static solution to \eqref{s31}.

\begin{ppn}\label{static soln}
	Let $u$ be as in Proposition \ref{p51}.  Then there exists a static solution $U$ to \eqref{s31} such that $\vec u = (U,0)$.  	
\end{ppn}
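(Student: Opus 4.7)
The plan is to show that any solution $u$ with precompact trajectory $K$ in $\h$ must be static, i.e.\ $\partial_t u \equiv 0$, whence $\vec u = (U,0)$ with $U$ solving the static equation $-\Delta_g U + V(r) U = N(r,U)$. Following the channels of energy strategy rooted in \cite{dkm4} and developed for exterior wave maps in \cite{kls1}, \cite{klls2}, the key is to combine Lemma \ref{l52} (uniform vanishing of exterior energy) with a lower bound on the exterior energy of the linear flow modulo a finite-dimensional obstruction.

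First, I would establish an exterior energy estimate for the free radial wave equation on $\M^d$. Since $d = 2\ell + 3$ is odd, one adapts the sharp estimates of Duyckaerts--Kenig--Merle on Euclidean space to the wormhole geometry, separately controlling the two asymptotically Euclidean ends $r \to \pm \infty$. The statement I would prove is that there exists a finite-dimensional subspace $P(R) \subset \h(|r| \geq R; \langle r \rangle^{d-1} dr)$ such that for every $(f,g)$ in the orthogonal complement of $P(R)$,
\[
\max_{\pm} \lim_{t \to \pm \infty} \| S_0(t)(f,g) \|_{\h(|r| \geq R + |t|; \langle r \rangle^{d-1} dr)} \gtrsim \| (f,g) \|_{\h(|r| \geq R; \langle r \rangle^{d-1} dr)},
\]
where $S_0$ denotes the free radial flow. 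The obstruction space $P(R)$ is identified with data whose second component vanishes and whose first component agrees with a static solution of $-\Delta_g U = 0$ with prescribed asymptotics at $r = \pm \infty$ (the analog of Proposition \ref{pa22} for the linearized potential-free equation).

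Second, I would transfer the estimate to the linear flow $S_V(t)$ associated with $-\Delta_g + V$. By Duhamel's formula, $S_V(t) - S_0(t)$ is a Duhamel term involving $V$, and the decay estimate \eqref{s27} together with the Strichartz bounds from Proposition \ref{p31} shows this error is acceptable outside the light cones. The obstruction space for $S_V$ is the analogous finite-dimensional space of static solutions to the linearized equation $-\Delta_g w + V w = 0$ with prescribed asymptotics from Proposition \ref{pa22}.

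Third, I would apply this to $\vec u(t_0)$ for arbitrary $t_0$. By Proposition \ref{p33} and the Strauss estimate \eqref{s211s}, the nonlinear evolution of $\vec u(t_0)$ agrees with its linear evolution in the exterior region $|r| \geq R + |t - t_0|$ up to errors that vanish by the same mechanism as in the small data theory (the nonlinearity $N(r,u)$ picks up decay in $r$ from the Strauss estimate). Combining with Lemma \ref{l52}, the exterior energy of the linear evolution must vanish as $|t - t_0| \to \infty$, which by the channels of energy estimate forces the restriction of $\vec u(t_0)$ to $\{|r| \geq R\}$ to lie in the obstruction subspace $P_V(R)$. Since this holds for every $R > 0$ and every $t_0 \in \R$, and the elements of $P_V(R)$ have vanishing second component, we deduce $\partial_t u(t_0, r) = 0$ for all $|r|$ large. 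A unique continuation argument for the time-independent equation obtained by taking $\partial_t$ of \eqref{s31} at $t_0$, or a direct ODE argument exploiting the precompactness of $K$ and the smoothness of $V$ and $N$, then spreads this to all $r$, giving $\partial_t u \equiv 0$. Finally $U := u(0, \cdot)$ solves the static equation and $\vec u = (U,0)$.

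The main obstacle will be establishing the channels of energy estimate on $\M^d$ in Steps 1 and 2: the wormhole has two asymptotically Euclidean ends, so the obstruction space is essentially doubled relative to the exterior problem of \cite{klls2}, and the two ends must be analyzed simultaneously through the throat at $r = 0$. Moreover, transferring from the free wave equation to the potential case is subtle because $V$ is only short-range decay $O(\langle r \rangle^{-4})$ and the propagator for $-\Delta_g + V$ is governed by the distorted Fourier basis recalled after Proposition \ref{p31}; one must verify that the perturbative Duhamel term does not leak into the finite-dimensional obstruction channel.
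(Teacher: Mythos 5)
There is a genuine gap, and it sits at the heart of your argument: the claim that the finite--dimensional obstruction space for the exterior energy estimate consists of data with vanishing second component. In odd dimensions $d = 2\ell+3 \geq 5$ this is false. The sharp exterior estimate used here (Proposition \ref{p54}, from \cite{klls1}) has obstruction plane $P(R) = \mbox{span}\{ (r^{2i-d},0), (0,r^{2j-d})\}$ with $j = 1,\ldots,\lfloor d/4 \rfloor \geq 1$, so $P(R)$ contains directions with nonzero velocity component (and its position directions are spanned by several powers $r^{2i-d}$, not by genuine static solutions of the linearized operator). Consequently, even if you show that $\vec u(t_0)$ restricted to $\{|r| \geq R\}$ lies in the obstruction space, you cannot conclude $\p_t u(t_0,r) = 0$ for large $r$, and the rest of your scheme (unique continuation to spread $\p_t u = 0$ inward) has nothing to start from. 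This is precisely why the paper's proof is long: after reducing to one end $r>0$ and conjugating by $(\la r \ra / r)^{(d-1)/2}$ to a flat exterior wave equation with decaying potential and nonlinearity, it runs an induction on the projection coefficients $\lam_i(t,r), \mu_j(t,r)$ (Steps 1 and 2 of the proof of Proposition \ref{p53}) to show all the $\mu_j$ decay and to extract the precise spatial asymptotics $r^{d-2}u_{e,0}(r) = \al + O(r^{-2})$; it then matches $\al$ with the nonlinear static solution $Q^+_{\al - \al_{\ell,n}}$ of prescribed asymptotics from Proposition \ref{pa22}, subtracts it, and reruns the argument to conclude $\vec u(t,r) = (U_+(r),0)$ on $r>0$ for all $t$, finally gluing the two ends through $r=0$ by the weak formulation. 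Your proposal never engages with the velocity directions of $P(R)$ nor with Proposition \ref{pa22} for the nonlinear static solutions, and the latter is essential because the limiting static solution $U$ is in general nonzero.

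A second, related gap: you assume a channels--of--energy estimate directly on the wormhole $\M^d$, and then a perturbed version for $-\Delta_g + V$ whose obstruction space consists of static solutions of the linearized equation, asserting that the Duhamel term ``does not leak into the obstruction channel.'' Neither estimate is established in the paper, and proving the perturbed one with an identified finite--dimensional obstruction is a substantial open-ended task, not a routine Duhamel/Strichartz perturbation (the projections onto $P(R)$ are not preserved by the perturbed flow). The paper deliberately avoids this: it only ever uses the \emph{free, flat} estimate of \cite{klls1}, applied to truncated data for a modified Cauchy problem (Definition \ref{d56}, Lemma \ref{l57}) in which the geometric corrections, the potential $V_e = O(r^{-4})$, and the nonlinearity are all treated as small errors in exterior regions, with smallness supplied by Lemma \ref{l52}. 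If you want to salvage your outline, you would have to either prove the perturbed wormhole channels estimate with an explicitly identified obstruction space (new work, not in the literature cited) or follow the paper's route of conjugation to the flat exterior problem plus the coefficient induction.
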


We will first show that $\vec u$ is equal to static solutions $(U_{\pm},0)$ on $\pm r > 0$ separately.  The proof for $r < 0$ is identical to the proof for $r > 0$ so we will only consider the case $r > 0$.  The major part of this section is devoted to proving the following.  

\begin{ppn}\label{p53}
	Let $u$ be as in Proposition \ref{p51}.  Then there exists a static solution $(U_+,0)$ such that $\vec u(t,r) = (U_+(r),0)$ for all $t \in \R$ and $r > 0$. 
\end{ppn}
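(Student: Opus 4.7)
The plan is to adapt the channels-of-energy methodology of Duyckaerts--Kenig--Merle, as applied to exterior wave maps in \cite{kls1,klls2} and to the corotational case in \cite{cpr}, to the full $\ell$-equivariant setting on $\M^d$. The intuition is that the potential $V(r) \sim \ell^2 \la r \ra^{-4}$ is short-range and the geometry near $r = +\infty$ is asymptotically Euclidean of dimension $d = 2\ell+3$, so the linear propagator $S_V(t)$ associated with $\p_t^2 - \Delta_g + V$ should radiate a fixed fraction of its exterior energy to infinity unless the initial data is (essentially) the restriction of a static solution with the prescribed power-law decay from Proposition \ref{pa22}. Since the compactness hypothesis combined with Lemma \ref{l52} precludes any such radiation, this should force $\vec u(t_0)$ to coincide with such a static profile on every exterior region $\{r > R\}$.

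The first step is to prove an exterior channel of energy estimate for the linear equation $\p_t^2 w - \Delta_g w + Vw = 0$ on $\M^d$. Using the distorted Fourier transform and spectral analysis of $-\Delta_g + V$ developed for $\ell=1$ in \cite{cpr} (which extends to general $\ell$ since the spectral picture is dictated only by short-range structure at $r = \pm \infty$ and absence of zero eigenvalues/resonances), and combining with the exterior-cone analysis of \cite{dkm4,kls1,klls2}, I would show that for every $R > 0$ there is a constant $c_R > 0$ such that for radial $(f,g) \in \h$,
\begin{align*}
\max_{\pm} \sup_{\pm t \geq 0} \| \vec{S_V(t)}(f,g) \|_{\h(r > R + |t|; \la r \ra^{d-1} dr)} \geq c_R \bigl\| (I - \pi_R)(f,g) \bigr\|_{\h(r > R; \la r \ra^{d-1} dr)},
\end{align*}
where $\pi_R$ is the orthogonal projection onto the one-dimensional subspace of $(U,0)$, with $U$ the unique solution of $-\Delta_g U + VU = 0$ on $(R,\infty)$ matching the natural power decay $r^{-\ell-1}$ at $+\infty$. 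The identification of the projection relies on the uniqueness part of Proposition \ref{pa22} applied to the linearized static equation.

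The second step is to upgrade to the nonlinear equation. For each $t_0$ and $R > 0$, one writes $\vec u(t_0 + t)$ on the cone $\{r > R + |t|\}$ as $\vec{S_V(t)} \vec u(t_0)$ plus a Duhamel correction, and uses finite-speed-of-propagation together with localized Strichartz estimates from Proposition \ref{p31} and the nonlinearity bounds \eqref{s28a}--\eqref{s29} (with the Strauss bounds \eqref{s211s}) to control the Duhamel term in the exterior by a small multiple of $\sup_{t} \| \vec u(t) \|_{\h(r > R + |t|; \la r \ra^{d-1} dr)}$. Comparing the nonlinear channel estimate with Lemma \ref{l52} (and letting $|t| \to \infty$) forces $(I - \pi_R)\vec u(t_0) \equiv 0$ on $\{r > R\}$ for every $t_0, R$. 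By Proposition \ref{pa22} applied to the full azimuth angle $\psi = Q_{\ell,n} + \la r \ra^\ell u$, the family of exterior profiles is pinned down by a single asymptotic coefficient $\alpha(t_0)$ at $r \to +\infty$, and conservation of energy plus continuity in $R$ as $R \downarrow 0$ give a single coefficient $\alpha$ independent of $t_0$; this yields a static $U_+$ with $\vec u(t_0, r) = (U_+(r), 0)$ for all $t_0 \in \R$ and $r > 0$.

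The main obstacle is the first step: establishing the exterior channels of energy for $-\Delta_g + V$ on the $d$-dimensional wormhole for arbitrary $\ell$. The Euclidean model DKM argument relies on explicit Bessel/Hankel asymptotics whose analogues on $\M^d$ must be extracted from the distorted Fourier basis of \cite{cpr}; controlling the low-frequency contribution through the throat at $r = 0$ and correctly matching against the power tail $r^{-\ell-1}$ at infinity is where the bulk of the technical work will lie, and where the dimension-dependent structure (dictated by $d = 2\ell+3$) enters.
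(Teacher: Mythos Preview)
Your proposal has a genuine structural gap in the first step. You claim the exterior channels-of-energy estimate for $\p_t^2 - \Delta_g + V$ should hold with $\pi_R$ projecting onto a \emph{one}-dimensional subspace $\{(U,0)\}$. This is not what happens in odd dimensions $d = 2\ell+3 \geq 5$. Already for the \emph{free} radial wave equation on $\R^d$ (Proposition~\ref{p54}, from \cite{klls1}), the null space $P(R)$ of the exterior-energy functional is
\[
P(R) = \mathrm{span}\Bigl\{ (r^{2i-d},0),\; (0,r^{2j-d}) : 1\le i\le \tilde k,\; 1\le j\le k \Bigr\},
\]
which has dimension $\tilde k + k \approx d/2$, and this grows with $\ell$. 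Adding the short-range potential $V \sim r^{-4}$ does not collapse this null space to a single static solution: all of these modes carry no energy into the exterior cone, and the time-component directions $(0,r^{2j-d})$ are not associated with any elliptic solution at all. So the inequality you write, with a one-dimensional $\pi_R$, simply cannot hold, and the argument that follows (extracting a single scalar $\alpha(t_0)$ directly from $(I-\pi_R)\vec u(t_0)=0$) collapses.

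What the paper actually does is pass via the change of variables $u_e = (\la r\ra/r)^{(d-1)/2} u$ to a radial semilinear wave equation on flat $\R^d$, apply the free-wave exterior estimate of \cite{klls1} with its full multi-dimensional $P(R)$ (treating $V_e$ and the nonlinearity as perturbations via Lemma~\ref{l57}), and then carries out a lengthy induction on the projection coefficients $\lambda_i(t,R),\mu_j(t,R)$ to show that all of them except $\lambda_1$ decay like negative powers of $R$ (Propositions~\ref{p513} and \ref{p526}, with separate treatments for $\ell$ even and odd). Only after this induction does one recover a single limiting coefficient $\alpha = \lim_{r\to\infty}\lambda_1(0,r)$ and match it to a harmonic-map tail via Proposition~\ref{pa22}. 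The induction, and in particular the mechanism by which the higher $\lambda_j,\mu_j$ are forced to zero (Lemmas~\ref{l517}--\ref{l519}, using the time-difference identities of Lemma~\ref{l511a}), is the core of the argument and is entirely absent from your outline. Your ``conservation of energy plus continuity in $R$'' step is not a substitute for it.
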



\subsection{Proof of Proposition \ref{p53}} 

Let $\eta > 0$ be arbitrary, and let $u$ be as in Proposition \ref{p51}.  
As in \cite{cpr}, we will show that $\vec u(t,r)$ is equal to a static solution $(U_+(r),0)$ to \eqref{s31} on $\{ t \in \R, r \in (\eta,\infty)\}$.  We now introduce a function related to $u$ that will be integral in the proof.  Define 
\begin{align*}
u_e(t,r) := \frac{\la r \ra^{(d-1)/2}}{r^{(d-1)/2}} u(t,r), \quad (t,r) \in \R \times (0,\infty).
\end{align*}
If $u$ solves \eqref{s31} then $u_e$ solves the following radial semilinear wave equation on $\R^{1+d}$ 
\begin{align}\label{s52e} 
\p_t^2 u_e - \p^2_r u_e - \frac{d-1}{r} \p_r u_e + V_e(r) u_e = N_e (r,u_e), \quad (t,r) \in \R \times (0,\infty),
\end{align}
where 
\begin{align}
V_e(r) = V(r) - \frac{(d-1)(d-4)}{2} r^{-2} \la r \ra^{-2} + \frac{(d-1)(d-5)}{4} r^{-2} \la r \ra^{-4} , \label{s52}
\end{align}
and $N_e(r,u_e) = F_e(r,u_e) + G_e(r,u_e)$ with 
\begin{align}
F_e(r,u_e) &= \frac{\la r \ra^{(d-1)/2}}{r^{(d-1)/2}} F \left (r, \frac{r^{(d-1)/2}}{\la r \ra^{(d-1)/2}} u_e \right ), \label{s53}\\
G_e(r,u_e) &= \frac{\la r \ra^{(d-1)/2}}{r^{(d-1)/2}} G \left (r, \frac{r^{(d-1)/2}}{\la r \ra^{(d-1)/2}} u_e \right ), \label{s54}
\end{align}
where $F$ and $G$ are given in \eqref{s26}.  
Note that for all $R > 0$, we have 
\begin{align}\label{s55}
\| \vec u_e(t) \|_{\h( r \geq R; r^{d-1} dr)} \leq C(R) \| \vec u \|_{\h( r \geq R; \la r \ra^{d-1} dr)},
\end{align}
so that by Lemma \ref{l52}, $u_e$ inherits the compactness properties
\begin{align}
\begin{split}\label{s56}
\forall R > 0, \quad \lim_{|t| \rar \infty} \| \vec u_e(t) \|_{\h( r \geq R + |t|; r^{d-1} dr)} = 0, \\
\lim_{R \rar \infty} \left [ \sup_{t \in \R} \| \vec u_e(t) \|_{\h( r \geq R + |t|; r^{d-1} dr)} \right ] = 0.
\end{split}
\end{align}
We also note that due to \eqref{s27}--\eqref{s29} and the definition of $V_e, F_e,$ and $G_e$, we have for all $r > 0$,
\begin{align} 
| V_e(r) | &\lesssim r^{-4}, \label{s57} \\
|F_e(u_e,r)| &\lesssim r^{-3} |u_e|^{2}, \label{s58} \\
|G_e(u_e,r)| &\lesssim r^{d-5}|u_e|^3, \label{s59}
\end{align} 
where the implied constants depend on $Q_{\ell,n}$ and $d$.

To prove Proposition \ref{p53}, we use channels of energy arguments that  originate in the seminal work \cite{dkm4} on the $3d$ energy--critical wave equation.  These arguments have since been used in the study of equivariant exterior wave maps \cite{kls1} \cite{klls2} and in the proof of the corotational case of Theorem \ref{t01} in \cite{cpr}.  The arguments of this section are
derived from those in \cite{klls2}. 
The proof is split into three main steps.  In the first two steps, we determine the precise asymptotics of 
$$(u_{e,0}(r)
,u_{e,1}(r)) := (u_e(0,r), \p_t u_e(0,r)) \quad \mbox{as } r \rar \infty.$$  
In particular, we show that there exists $\al \in \R$ such that 
\begin{align}
r^{d-2} u_{e,0}(r) &= \al + O(r^{-2}), \label{s510} \\
\int_r^\infty u_{e,1}(\rho) \rho^{2j-1} d\rho &= O(r^{2j - d - 1}), \quad j = 1, \ldots, \left \lfloor \frac{d}{4} \right \rfloor, \label{s511}
\end{align}  
as $r \rar \infty$.  In the final step, we use this information and channels of energy arguments to conclude the proof of Proposition \ref{p53}.  In the remainder of this subsection we denote $\h(r \geq R) := \h (r \geq R; r^{d-1} dr)$.

As in the study of corotational wave maps on a wormhole, the key tool used in  
establishing \eqref{s510} and \eqref{s511} is the following exterior energy estimate for radial free waves 
on Minkowski space $\R^{1+d}$
with $d$ odd.  The case $d = 5$ used for corotational wave maps on a wormhole and exterior wave maps was
proved in \cite{kls1}, and the general case of $d \geq 3$ and odd was proven in \cite{klls1}. 

\begin{ppn}[Theorem 2, \cite{klls1}] \label{p54}
	Let $d \geq 3$ be odd.  Let $v$ be a radial solution to the free wave equation in $\R^{1 + d}$ 
	\begin{align*}
	&\p_t ^2 v - \Delta v = 0, \quad (t,x) \in \R^{1+d}, \\
	&\vec v(0) = (f,g) \in \dot H^1 \times L^2 ( \R^d).
	\end{align*}
	Then for every $R > 0$,
	\begin{align}\label{s512}
	\max_{\pm} \inf_{\pm t \geq 0} \int_{r \geq R + |t|} |\nabla_{t,r} v(t,r)|^2 r^{d-1} dr \geq \frac{1}{2} \| \pi^{\perp}_R (f,g) 
	\|_{\h(r \geq R)},
	\end{align}
	where $\pi_R = I - \pi_R^{\perp}$ is the orthogonal projection onto the plane 
	\begin{align*}
	P(R) = \mbox{span} \Bigl \{ (r^{2i - d},0), (0,r^{2j - d}) : i = 1,\ldots, \Bigl \lfloor \frac{d+2}{4} \Bigr \rfloor, 
	j = 1,\ldots, \Bigl \lfloor \frac{d}{4} \Bigr \rfloor \Bigr \} 
	\end{align*}
	in $\h( r \geq R)$. The left--hand side of \eqref{s512} is identically 0 for data satisfying $(f,g)|_{r \geq R} \in P(R)$
\end{ppn}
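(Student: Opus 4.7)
The plan is to reduce the radial free wave equation on $\mathbb R^{1+d}$, for $d = 2k+1$ odd, to a pure one-dimensional transport problem by exploiting the special structure of radial free waves in odd spatial dimensions. Any radial solution admits a Bateman--Whittaker-type representation expressing $v(t,r)$ as a suitably weighted $(k-1)$-fold differentiation of two profiles $F_{\pm}(r \mp t)$; under this correspondence the evolution becomes pure translation of $F_\pm$ along characteristics, and the conserved energy agrees, modulo a finite-dimensional correction, with $\|F_+\|_{L^2}^2 + \|F_-\|_{L^2}^2$.

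First I would set up this transform carefully on the half-line $r \geq R$, showing that it intertwines the exterior energy norm $\|\,\cdot\,\|_{\h(r \geq R)}$ with an $L^2$-norm of the profiles on $[R,\infty)$ up to boundary contributions at $r = R$. With the one-dimensional transport picture in hand, the inequality follows from d'Alembert: the part of $F_+$ (respectively $F_-$) supported in $[R,\infty)$ is transported to $[R+t,\infty)$ as $t \to +\infty$ (respectively as $t \to -\infty$) and stays entirely in the region $\{r \geq R + |t|\}$. Taking the maximum over the two time directions captures at least half of $\|F_+\|_{L^2([R,\infty))}^2 + \|F_-\|_{L^2([R,\infty))}^2$, which produces precisely the factor $\tfrac{1}{2}$ in \eqref{s512}.

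The obstruction plane $P(R)$ is the kernel of this transform on the exterior. The data $(r^{2i-d},0)$ for $1 \leq i \leq \lfloor (d+2)/4 \rfloor$ and $(0, r^{2j-d})$ for $1 \leq j \leq \lfloor d/4 \rfloor$ are precisely those radial Cauchy data whose associated profiles $F_\pm$ vanish identically on $[R,\infty)$; conversely, a dimension count in the polynomial kernels of the weighted $(k-1)$-fold differentiation shows the subspace is exhausted by these powers. In practice, the verification can be carried out by induction on the odd dimension $d$, performing a one-step descent $d \to d-2$ that differentiates the solution in $r$, rescales by a power of $r$, and peels off one non-radiating mode on each of the position and velocity sides; the base case $d = 3$ is the classical exterior energy estimate, immediate from $u(t,r) = (F(r-t) + G(r+t))/r$.

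The main obstacle will be keeping careful track of the boundary contributions at $r = R$ at each step of the descent, since these are exactly what generate the projection $\pi_R^\perp$ and what could, in principle, degrade the constant $\tfrac{1}{2}$. The correct choice of $\pi_R^\perp$ — projecting off the span identified above — is engineered precisely so that the boundary contributions at $r = R$ cancel at each stage of the induction, and the one-dimensional transport bound then passes through without loss to yield the sharp estimate.
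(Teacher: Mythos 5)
You should first note that the paper does not prove this proposition at all: it is quoted, as the bracketed attribution indicates, from Theorem 2 of \cite{klls1}, and is used here as a black box. So the relevant benchmark is the argument in \cite{klls1}, which is not an induction on dimension: there the asymptotic exterior energy $\lim_{t\to\pm\infty}\int_{r\geq R+|t|}|\nabla_{t,r}v|^2 r^{d-1}dr$ is computed exactly, using the explicit representation of radial free waves in odd dimensions (half--integer Bessel functions are elementary), and the resulting pair of quadratic forms is analyzed directly; the plane $P(R)$ appears as the common null space of the limiting forms, and identifying it, together with the constant $\tfrac12$, requires genuinely nontrivial computations (Gram/Cauchy--matrix type linear algebra on the powers $r^{2i-d}$), not just bookkeeping.

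Measured against that, your sketch contains the right first ingredient (the odd--dimensional reduction of radial waves to one--dimensional transported profiles via iterates of $\tfrac1r\partial_r$), but the hard content is asserted rather than proved, and one of your structural claims is wrong as stated. First, the data in $P(R)$ are \emph{not} those whose profiles vanish on $[R,\infty)$: for $d=3$ and $(f,g)=(r^{-1},0)$ the solution is static, $rv\equiv 1$, the profiles are nonzero constants, and the exterior energy at time $t$ equals $(R+|t|)^{-1}\neq 0$ --- only its infimum over $t$ vanishes, by leakage into the cone. The kernel consists of low--degree polynomial profiles, and matching that kernel with precisely the powers $r^{2i-d}$, $i\leq\lfloor\frac{d+2}{4}\rfloor$, and $r^{2j-d}$, $j\leq\lfloor\frac{d}{4}\rfloor$ (total dimension $\frac{d-1}{2}$), is exactly the computation you defer to ``a dimension count.'' Second, the factor $\tfrac12$ comes from $\max(a_+,a_-)\geq\tfrac12(a_++a_-)$ \emph{after} one proves that the sum of the two asymptotic exterior energies equals (or dominates) $\|\pi_R^\perp(f,g)\|^2_{\h(r\geq R)}$; this requires showing that the cross terms between the outgoing and incoming contributions vanish in the limit and that the boundary terms at $r=R$ generated by the weighted differentiations assemble into precisely the projection $\pi_R^\perp$ --- you state that this cancellation is ``engineered'' but give no mechanism. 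Third, the proposed descent $d\to d-2$ runs the transform backwards, i.e.\ it integrates rather than differentiates, which introduces exactly the integration constants/modes at issue and changes both the exterior norm and the candidate plane at each step; no indication is given of how the sharp constant and the exact plane $P(R)$ would survive this induction, and none of the known proofs proceeds this way. As it stands the proposal is a plausible outline of where such a theorem comes from, but not a proof.
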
  

We remark here that Proposition \ref{p54} states, quantitatively, that generic solutions to the free wave equation
on $\R^{1+d}$ with $d$ odd emit 
a fixed amount of energy into regions exterior to light cones.  However, this property
fails in the case $R = 0$ for general data $(f,g)$ in even dimensions (see \cite{cks}).

In the remainder of this subsection, we denote 
\begin{align*}
\tilde k := \left \lfloor \frac{d+2}{4} \right \rfloor, \quad 
k := \left \lfloor \frac{d}{4} \right \rfloor.
\end{align*}
For $R \geq 1$, we define the projection coefficients $\lam_i(t,R),\mu_j(t,R)$ for $i = 1, \ldots, \tilde k$, $j = 1,\dots, k$, via
\begin{align}
\pi_R \vec u_e (t,r) = \left ( u_e(t,r) - \sum_{i = 1}^{\tilde k} \lam_i(t,R) r^{2i-d}, 
\p_t u_e(t,r) - \sum_{j = 1}^{k} \mu_j(t,R) r^{2j-d} \right ). \label{s522} 
\end{align}
We now give identities relating $u_e$ to the coefficients $\lam_j(t,r), \mu_i(t,r)$ and an equivalent way of 
expressing the relative size of $\| \pi_R \vec u_e(t) \|_{\h(r \geq R)}$ and $\| \pi_R^\perp \vec u_e(t) \|_{\h(r \geq R)}$ using projection 
coefficients. 

\begin{lem}[Lemma 4.5, Lemma 5.10, \cite{klls2}]\label{l54} 
	For each fixed $R > 0$, and $(t,r) \in \{ r \geq R + |t| \}$, we have the following identities:
	\begin{align*}
	u_e(t,r) &= \sum_{j = 1}^{\tilde k} \lam_j(t,r) r^{2j-d},  \\
	\int_r^\infty \p_t u_e(t,\rho) \rho^{2i-1} d\rho &= \sum_{j = 1}^k \mu_j(t,r) \frac{r^{2i+2j-d}}{d-2i-2j}, \quad \forall 1 
	\leq i \leq k, \\
	\mu_j(t,r) &= \sum_{i = 1}^k r^{d-2i-2j} \frac{c_i c_j}{d-2i-2j} \int_r^\infty \p_t u_e(t,\rho) 
	\rho^{2i-1} d \rho, \quad \forall 1 \leq i \leq k, \\
	\lam_j(t,r) &= \frac{d_j}{d-2j} \left ( u_e(t,r)r^{d-2j} + 
	\sum_{i = 1}^{\tilde k -1} \frac{(2i)d_{i+1} r^{d-2i-2j}}{d-2i-2j} \int_r^\infty u_e(t,\rho) \rho^{2i-1} d\rho \right ),
	\end{align*}
	where the last identity holds for all $j \leq \tilde k$ and 
	\begin{align*}
	c_j &:= \frac{\Pi_{1 \leq l \leq k} (d - 2j - 2l)}{\Pi_{1 \leq l \leq k, l \neq j} (2l-2j)}, \quad 1 \leq j \leq k, \\
	d_j &:= \frac{\Pi_{1 \leq l \leq \tilde k} (d  + 2- 2j - 2l)}{\Pi_{1 \leq l \leq \tk, l \neq j} (2l-2j)}, \quad 1 \leq j \leq \tilde k.
	\end{align*}
	Also, the following estimates hold
	\begin{align*}
	\| \pi_R \vec u_e(t) \|^2_{\h(r \geq R)} 
	&\simeq 
	\sum_{i = 1}^{\tilde k} \Bigl ( \lam_i(t,R) R^{2i - \frac{d+2}{2}} \Bigr )^2 
	+ \sum_{j = 1}^{k} \Bigl ( \mu_j(t,R) R^{2j - \frac{d}{2}} \Bigr )^2,  \\
	\| \pi^\perp_R \vec u_e(t) \|^2_{\h(r \geq R)} 
	&\simeq \int_R^\infty
	\sum_{i = 1}^{\tilde k} \Bigl ( \p_r \lam_i(t,r) r^{2i - \frac{d+1}{2}} \Bigr )^2 
	+ \sum_{j = 1}^{k} \Bigl ( \p_r \mu_j(t,r) r^{2j - \frac{d-1}{2}} \Bigr )^2 dr,
	\end{align*}
	where the implied constants depend only on $d$. 
\end{lem}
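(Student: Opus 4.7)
My plan is to follow the proofs of Lemma 4.5 and Lemma 5.10 in \cite{klls2}, which transfer essentially verbatim since the identities and norm equivalences are purely statements about the finite-dimensional orthogonal projection onto $P(R) \subset \h(r \geq R)$ and do not depend on the particular equation $u_e$ satisfies. The argument has three steps: (1) derive the explicit formulas for the projection coefficients $\lam_j(t,R), \mu_j(t,R)$; (2) verify the recovery identities by using the algebraic structure of the coefficients $c_j, d_j$; (3) establish the norm equivalences by a Gram-matrix computation.

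For step (1), the inner product on $\h(r \geq R; r^{d-1} dr)$ decouples between its two slots, so the projection onto $P(R)$ splits into an $\dot H^1$-projection of $u_e(t,\cdot)$ onto $\mathrm{span}\{r^{2i-d}\}_{i=1}^{\tk}$ and an $L^2$-projection of $\partial_t u_e(t,\cdot)$ onto $\mathrm{span}\{r^{2j-d}\}_{j=1}^{k}$. Writing the normal equations for each and integrating by parts (justified by the decay of the basis functions against the weight $r^{d-1}$) reduces the orthogonality conditions to finite linear systems with Vandermonde-type matrices whose $(i,j)$ entry is a constant multiple of $R^{2i+2j-d}/(d-2i-2j)$. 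Inverting these systems gives the claimed formulas, with the constants $c_j$ and $d_j$ arising as the Vandermonde cofactors; the extra factor of $2i$ in the formula for $\lam_j$ comes from the derivative hitting $\rho^{2i-d}$ before integrating by parts.

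For step (2), the identity $\int_r^\infty \partial_t u_e(t,\rho) \rho^{2i-1} d\rho = \sum_j \mu_j(t,r) r^{2i+2j-d}/(d-2i-2j)$ is literally the normal-equation matrix identity from step (1) read in the other direction. The recovery identity $u_e(t,r) = \sum_j \lam_j(t,r) r^{2j-d}$ is proved either by substituting the explicit formula for $\lam_j(t,r)$ and collapsing the double sum using the Vandermonde interpolation identities satisfied by $\{d_j\}$, or, more cleanly, by differentiating both sides in $r$ (using Leibniz rule on the integrals defining $\lam_j$), observing that both sides solve the same first-order ODE in $r$, and matching their behavior as $r \to \infty$ where both vanish.

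Finally, for step (3), since the projection has an explicit expansion in the basis $\{r^{2j-d}\}$, one computes $\|\pi_R^\perp \vec u_e(t)\|_{\h(r \geq R)}^2$ directly: the cross terms in the Gram matrix have the same homogeneity as the diagonal terms, so up to constants depending only on $d$ the norm squared is $\sum_i (\lam_i(t,R) R^{2i - (d+2)/2})^2 + \sum_j (\mu_j(t,R) R^{2j - d/2})^2$. For $\|\pi_R \vec u_e(t)\|_{\h(r \geq R)}^2$ one uses the recovery identities from step (2) to rewrite $u_e - \sum_i \lam_i r^{2i-d}$ and $\partial_t u_e - \sum_j \mu_j r^{2j-d}$ in terms of $\partial_r \lam_i(t,r)$ and $\partial_r \mu_j(t,r)$ (letting the projection radius vary), which produces the integral expression. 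The main obstacle is purely combinatorial bookkeeping — correctly tracking signs, index shifts, and the two different floor functions $\tk$ and $k$ in the Vandermonde inversion — but the resulting formulas are precisely those recorded in \cite{klls2}, so we appeal there for the explicit verification.
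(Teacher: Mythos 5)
Your proposal is correct and follows essentially the same route as the paper, which does not reprove this lemma but simply quotes Lemmas 4.5 and 5.10 of \cite{klls2}; as you note, those proofs are purely statements about the orthogonal projection onto $P(R)$ in $\h(r \geq R)$ and carry over unchanged because they never use the equation satisfied by $u_e$. One small correction in your step (3): with the convention of Proposition \ref{p54} (where $\pi_R$ projects \emph{onto} $P(R)$), the Gram--matrix/scaling computation yields the equivalence for $\| \pi_R \vec u_e(t) \|_{\h(r \geq R)}$, while the residual $u_e - \sum_i \lam_i r^{2i-d}$, rewritten through the varying-radius coefficients $\p_r \lam_i(t,r), \p_r \mu_j(t,r)$, yields the integral expression for $\| \pi^\perp_R \vec u_e(t) \|_{\h(r \geq R)}$ --- you have these two labels interchanged, an understandable slip since \eqref{s522} in the text is itself written with $\pi_R$ where $\pi_R^\perp$ is meant.
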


We now proceed to the first step in proving Proposition \ref{p53}. 

\subsubsection*{Step 1: Decay rate for $\pi^{\perp}_R \vec u_e(t)$ in $\h(r \geq R)$}
In this step we establish the following decay estimate for $\pi_R^{\perp} \vec u_e(t)$.  

\begin{lem}\label{l55}
	There exists $R_0 > 1$ such that for all $R \geq R_0$ and for all $t \in \R$ we have 
	\begin{align}\label{s513a}
	\begin{split}
	\| \pi_R^{\perp} \vec u_e(t) \|_{\h(r \geq R)} \lesssim R^{-2} \| \pi_R \vec u_e(t) \|_{\h(r \geq R)} + 
	R^{-d/2} \| \pi_R \vec u_e(t) \|_{\h(r \geq R)}^2 + R^{-1}\| \pi_R \vec u_e(t) \|_{\h(r \geq R)}^3. 
	\end{split}
	\end{align}
\end{lem}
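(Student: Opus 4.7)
The plan is to adapt the channels of energy strategy of \cite{dkm4,kls1,klls2,cpr}: apply the radial exterior energy estimate Proposition~\ref{p54} to the free wave $v_L$ on $\R^{1+d}$ with data $\vec u_e(t_0)$, then bound the Duhamel correction $\vec u_e(t_0 + t) - \vec v_L(t)$ using the compactness \eqref{s56} together with the decay estimates \eqref{s57}--\eqref{s59} for $V_e$, $F_e$, and $G_e$. This mirrors the corotational argument of \cite{cpr}.

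Concretely, fix $t_0 \in \R$ and $R \geq R_0$, to be chosen, and let $v_L$ be the radial free wave on $\R^{1+d}$ with data $\vec v_L(0) = \vec u_e(t_0)$ (any extension across $r = 0$ is fine, since the exterior estimate reads only $r \geq R$). Proposition~\ref{p54} yields some $\epsilon \in \{+,-\}$ with
\begin{equation*}
\inf_{\epsilon t \geq 0} \|\nabla_{t,r} v_L(t)\|_{L^2(r \geq R + |t|;\, r^{d-1}dr)}^2 \geq \tfrac{1}{2}\|\pi_R^\perp \vec u_e(t_0)\|_{\h(r \geq R)}^2.
\end{equation*}
Duhamel's formula for \eqref{s52e} with source $-V_e u_e + N_e(\cdot, u_e)$, the free wave energy estimate, and finite speed of propagation give
\begin{equation*}
\|\vec u_e(t_0+t) - \vec v_L(t)\|_{\h(r \geq R + |t|)} \lesssim \int_0^{|t|} \bigl\|(V_e u_e + F_e(\cdot,u_e) + G_e(\cdot,u_e))(t_0 + s)\bigr\|_{L^2(r \geq R + s;\, r^{d-1}dr)}\, ds.
\end{equation*}
Sending $\epsilon t \to \infty$ and using that $\|\vec u_e(t_0+t)\|_{\h(r \geq R + |t|)} \to 0$ by \eqref{s56} reduces the lemma to estimating the Duhamel integral on the right.

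For each source term I invoke the pointwise bounds \eqref{s57}--\eqref{s59}, the radial Strauss estimate $|u_e(s,r)| \lesssim r^{-(d-2)/2}\|\vec u_e(s)\|_{\h(r' \geq r)}$, and the truncated Hardy inequality $\int_\rho^\infty r^{-2}|u|^2 r^{d-1}\, dr \lesssim \|\vec u\|_{\h(r \geq \rho)}^2$. A direct calculation then yields
\begin{align*}
\|V_e u_e\|_{L^2(r\geq R+s)} &\lesssim (R+s)^{-3}\|\vec u_e(t_0+s)\|_{\h(r\geq R+s)},\\
\|F_e(\cdot,u_e)\|_{L^2(r\geq R+s)} &\lesssim (R+s)^{-(d+2)/2}\|\vec u_e(t_0+s)\|_{\h(r\geq R+s)}^2,\\
\|G_e(\cdot,u_e)\|_{L^2(r\geq R+s)} &\lesssim (R+s)^{-2}\|\vec u_e(t_0+s)\|_{\h(r\geq R+s)}^3.
\end{align*}
A standard light-cone energy identity (absorbing the negative part of $V_e$ via $|V_e|\lesssim r^{-4}$ and handling the nonlinear contribution by Strauss plus Gronwall) gives $\|\vec u_e(t_0+s)\|_{\h(r\geq R+s)} \lesssim \|\vec u_e(t_0)\|_{\h(r\geq R)}$ for $R \geq R_0$. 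Integrating the decay rates $(R+s)^{-3}$, $(R+s)^{-(d+2)/2}$, $(R+s)^{-2}$ in $s$ produces the factors $R^{-2}$, $R^{-d/2}$, $R^{-1}$, so
\begin{equation*}
\|\pi_R^\perp \vec u_e(t_0)\|_{\h(r \geq R)} \lesssim R^{-2}\|\vec u_e(t_0)\|_{\h(r\geq R)} + R^{-d/2}\|\vec u_e(t_0)\|_{\h(r\geq R)}^2 + R^{-1}\|\vec u_e(t_0)\|_{\h(r\geq R)}^3.
\end{equation*}

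To finish, decompose $\vec u_e(t_0) = \pi_R \vec u_e(t_0) + \pi_R^\perp \vec u_e(t_0)$ orthogonally and let $M := \sup_t \|\vec u_e(t)\|_\h < \infty$ (finite by precompactness of $K$). Majorising $\|\pi_R^\perp \vec u_e(t_0)\|^j \leq M^{j-1}\|\pi_R^\perp \vec u_e(t_0)\|$ for $j = 2,3$ converts the right-hand side into
\begin{equation*}
C\bigl(R^{-2}\|\pi_R \vec u_e(t_0)\| + R^{-d/2}\|\pi_R \vec u_e(t_0)\|^2 + R^{-1}\|\pi_R \vec u_e(t_0)\|^3\bigr) + C\bigl(R^{-2} + MR^{-d/2} + M^2 R^{-1}\bigr)\|\pi_R^\perp \vec u_e(t_0)\|,
\end{equation*}
and choosing $R_0$ large enough that $C(R^{-2} + MR^{-d/2} + M^2 R^{-1}) \leq 1/2$ for $R \geq R_0$ absorbs the last term into the left, proving \eqref{s513a}. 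The main technical obstacle is rigorously establishing the propagation bound $\|\vec u_e(t_0+s)\|_{\h(r\geq R+s)} \lesssim \|\vec u_e(t_0)\|_{\h(r\geq R)}$: because $V_e$ is not manifestly signed, the standard light-cone energy identity must be supplemented with a small-potential absorption (valid only once $R$ is large), and the nonlinear source requires a Gronwall step controlled by the Strauss estimate.
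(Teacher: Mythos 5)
Your proposal is correct in outline, but it follows a genuinely different route from the paper's. The paper never runs a Duhamel--Gronwall argument in the exterior cone: it truncates the data at $r=R$ as in \eqref{s519}--\eqref{s520}, replaces $V_e,F_e,G_e$ inside the light cone by their values at $r=R+|t|$ (Definition \ref{d56}), and then invokes a small--data global result with exterior Strichartz estimates for the modified equation \eqref{s513} (Lemma \ref{l57}, via \cite{hmssz}); that lemma delivers the uniform closeness to the free evolution with the factors $R^{-2}\de+R^{-d/2}\de^2+R^{-1}\de^3$, where the required smallness $\de\le\de_0$ comes from the compactness \eqref{s56}, and Proposition \ref{p54}, finite speed of propagation, and the decay \eqref{s56} then finish exactly as in your last step. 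You instead work directly with the exterior--cone energy/Duhamel inequality and pointwise--in--time weighted Strauss/Hardy bounds on $V_eu_e$, $F_e$, $G_e$, which reproduce the same rates after integrating $(R+s)^{-3}$, $(R+s)^{-(d+2)/2}$, $(R+s)^{-2}$; this is more elementary (no modified Cauchy problem, no Strichartz theory), at the price of the propagation bound $\|\vec u_e(t_0+s)\|_{\h(r\geq R+s)}\lesssim\|\vec u_e(t_0)\|_{\h(r\geq R)}$ that you flag. That bound does close, and more simply than you suggest: with $M:=\sup_t\|\vec u_e(t)\|_{\h(r\geq\eta)}<\infty$ (finite by \eqref{s55} and precompactness of $K$) the quadratic and cubic sources are majorized by $M(R+s)^{-(d+2)/2}A(s)$ and $M^2(R+s)^{-2}A(s)$ with $A(s):=\|\vec u_e(t_0+s)\|_{\h(r\geq R+s)}$, and Gronwall gives $A(s)\le A(0)\exp\bigl(C(R^{-2}+MR^{-d/2}+M^2R^{-1})\bigr)\lesssim A(0)$ once $R_0=R_0(M)$ is large; no sign condition on $V_e$ or separate light--cone identity is needed, and $R_0$ is independent of $t_0$. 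Two points to tighten: $\vec u_e(t_0)$ is not in $\dot H^1\times L^2(\R^d)$ near $r=0$ (it grows like $r^{-(d-1)/2}$), so ``any extension'' should be a finite--energy truncation as in \eqref{s519}--\eqref{s520} --- harmless, since only $r\geq R$ enters; and the vanishing of $\|\vec u_e(t_0+t)\|_{\h(r\geq R+|t|)}$ as $\pm t\to\infty$ for general $t_0$ should be justified from the precompactness of the trajectory (the time--translated solution has the same trajectory $K$, so the argument of Lemma \ref{l52} applies), which is also how the paper obtains uniformity in $t_0$.
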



Since we are only interested in the behavior of $\vec u_e(t,r)$ in exterior regions $\{ r \geq R + |t| \}$, we first consider a modified Cauchy problem.  
In particular, we can, by finite speed of propagation, alter $V_e$, $F_e$, and $G_e$ appearing in \eqref{s52e} in the interior region 
$\{ r \leq R + |t| \}$ without affecting the behavior of $\vec u_e$ on the exterior region $\{r \geq R + |t|\}$.

\begin{defn}\label{d56}
	Let $R \geq \eta$.  For a function $f = f(r,u) : [\eta,\infty) \times \R \rar \R$, we define
	\begin{align*}
	f_R(t,r,u) := 
	\begin{cases}
	f(R + |t|, u) \quad &\mbox{if } \eta \leq r \leq R + |t| \\
	f(r,u) \quad &\mbox{if } r \geq R + |t|
	\end{cases}, \quad (t,r,u) \in \R \times [\eta,\infty) \times \R.
	\end{align*}
\end{defn}

We now consider solutions to a modified version of \eqref{s52e}: 
\begin{align}\label{s513}
\begin{split}
&\p_t^2 h - \p_r^2 h - \frac{d-1}{r} \p_r h = N_R (t,r,h), \quad (t, r) \in \R \times (\R \backslash B(0,\eta)), \\
&\vec h(0) = (h_0,h_1) \in \h_0(r \geq \eta), 
\end{split}
\end{align}
where $\h_0(r \geq \eta) = \{ (h_0,h_1) \in \h(r \geq \eta) : h_0(\eta) = 0 \}$ and 
\begin{align*}
N_R(t,r,h) = - V_{e,R}(t,r) h + F_{e,R}(t,r,h) + G_{e,R}(t,r,h). 
\end{align*}
We note that from Definition \ref{d56} and \eqref{s57}, \eqref{s58}, and \eqref{s59}, we have 
\begin{align}
| V_{e,R}(t,r) | &\lesssim 
\begin{cases}\label{s514}
(R + |t|)^{-4} \quad &\mbox{if } \eta \leq r \leq R + |t|, \\
r^{-4} \quad &\mbox{if } r \geq R + |t|, 
\end{cases}\\
|F_{e,R}(t,r,h)| &\lesssim 
\begin{cases}\label{s515} 
(R + |t|)^{-3}|h|^2 \quad &\mbox{if } \eta \leq r \leq R + |t|, \\
r^{-3}|h|^3 \quad &\mbox{if } r \geq R + |t|, 
\end{cases}\\
|G_{e,R}(t,r,h)| &\lesssim 
\begin{cases}\label{s516} 
(R + |t|)^{d-5}|h|^3 \quad &\mbox{if } \eta \leq r \leq R + |t|, \\
r^{d-5}|h|^3 \quad &\mbox{if } r \geq R + |t|.
\end{cases}
\end{align}

\begin{lem}\label{l57}
	There exist $R_0 > 0$ large and $\de_0 > 0$ small such that for all $R \geq R_0$ and all $(h_0,h_1) \in \h_0(r \geq \eta)$ with 
	\begin{align*}
	\| (h_0,h_1) \|_{\h(r \geq \eta)} \leq \de_0, 
	\end{align*}
	there exists a unique globally defined solution $h$ to \eqref{s513} such that 
	\begin{align}\label{s517}
	\left \| r^{(d-4)/3} h \right \|_{L^3_tL^6_x(\R \times (\R^d \backslash B(0,\eta)))}
	\lesssim \| \vec h(0) \|_{\h(r \geq \eta)}.
	\end{align}
	Moreover, if we define $h_L$ to be the solution to the free equation $\p_t^2 h_L - \Delta h_L = 0$, $(t,x) \in \R \times (
	\R^d \backslash B(0,\eta))$, $\vec h_L(0) = (h_0,h_1)$, then 
	\begin{align}\label{s518}
	\begin{split}
	\sup_{t \in \R} \| \vec h(t) - \vec h_L(t) \|_{\h(r \geq \eta)} \lesssim R^{-2} \| \vec h(0) \|_{\h(r \geq \eta)} + 
	R^{-d/2} \| \vec h(0) \|_{\h(r \geq \eta)}^2 + R^{-1} \| \vec h(0) \|_{\h(r \geq \eta)}^3. 
	\end{split}
	\end{align}
\end{lem}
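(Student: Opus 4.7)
The plan is to set up a Banach fixed-point iteration for the Duhamel form of \eqref{s513} on the exterior domain $\{r \geq \eta\}$, and then read off the comparison \eqref{s518} by applying the resulting nonlinear bounds to the equation satisfied by $h - h_L$. I would work in the complete metric space
\begin{align*}
X := \left\{ u \in L^\infty_t \h(r \geq \eta) \: : \: r^{(d-4)/3} u \in L^3_t L^6_x(\R \times \{r \geq \eta\}) \right\},
\end{align*}
equipped with the natural norm. The free wave on $\R^d$ exterior to $B(0,\eta)$ (with the Dirichlet condition at $r = \eta$ used to define $\h_0(r \geq \eta)$) satisfies the Strichartz estimate
\begin{align*}
\left\| r^{(d-4)/3} h_L \right\|_{L^3_t L^6_x} + \sup_t \| \vec h_L(t) \|_{\h(r \geq \eta)} \lesssim \| \vec h(0) \|_{\h(r \geq \eta)},
\end{align*}
obtained by combining the standard Strichartz estimates on $\R^d$ with the weighted Sobolev embedding \eqref{s34a} already used in Section 3 (the boundary condition at $r = \eta$ can be handled by reflection/extension, or by appealing to the exterior Strichartz theory).

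For the contraction, the nonlinearity $N_R = -V_{e,R} h + F_{e,R} + G_{e,R}$ is estimated in the dual norm $L^1_t L^2_x(r \geq \eta)$, splitting the integration according to the forward light cone $\{r \leq R + |t|\}$ versus its complement. This splitting is dictated by the pointwise bounds \eqref{s514}--\eqref{s516}, which switch from a time-decaying factor $(R + |t|)^{-p}$ on the inside to a spatial factor $r^{-p}$ on the outside. In both regimes the radial Strauss inequality $|h(r)| \lesssim r^{-(d-2)/2} \| h \|_{\dot H^1}$ renders the spatial integrals finite, while integrating the time factor $(R + |t|)^{-p}$ over $\R$ then produces negative powers of $R$. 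Carrying this out carefully yields
\begin{align*}
\| V_{e,R} h \|_{L^1_t L^2_x} &\lesssim R^{-2} \| h \|_{L^\infty_t \dot H^1}, \\
\| F_{e,R}(\cdot,h) \|_{L^1_t L^2_x} &\lesssim R^{-d/2} \| h \|_{L^\infty_t \dot H^1}^2, \\
\| G_{e,R}(\cdot,h) \|_{L^1_t L^2_x} &\lesssim R^{-1} \| h \|_{L^\infty_t \dot H^1}^3,
\end{align*}
with analogous estimates controlling $N_R(\cdot, h_1) - N_R(\cdot, h_2)$. Choosing $R_0$ large and $\delta_0$ small, the Duhamel solution map is a strict contraction on a small ball in $X$, giving the unique global $h$ and the Strichartz bound \eqref{s517}. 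The comparison \eqref{s518} is then immediate, since $h - h_L$ solves the zero-data Cauchy problem with source $N_R(\cdot, h)$ and the energy inequality
\begin{align*}
\sup_t \| \vec h(t) - \vec h_L(t) \|_{\h(r \geq \eta)} \lesssim \| N_R(\cdot, h) \|_{L^1_t L^2_x(r \geq \eta)}
\end{align*}
combined with the three bounds above and the a priori control $\| h \|_{L^\infty_t \dot H^1} \lesssim \| \vec h(0) \|_{\h(r \geq \eta)}$ gives exactly the right-hand side of \eqref{s518}.

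The main technical obstacle is producing the stated $R^{-d/2}$ factor for the quadratic term. The dominant contribution comes from the exterior-cone region, where the pointwise bound $|F_{e,R}| \lesssim r^{-3} |h|^2$ combined with Strauss yields $\| F_{e,R}(t) \|_{L^2_x(r \geq R + |t|)} \lesssim (R + |t|)^{-(d+2)/2} \| h \|_{\dot H^1}^2$, whose $t$-integral is precisely of order $R^{-d/2}$. The inside-cone region is more delicate: the crude time-uniform bound $(R + |t|)^{-3} |h|^2$ combined only with Strauss gives $R^{-2}$, which is worse than $R^{-d/2}$ once $d > 4$, so one must use a sharper spatial estimate there, e.g.\ trading the Strauss bound for the weighted $L^6$ control supplied by the $X$-norm. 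A secondary concern is verifying that the Strichartz estimate on $\R^d \setminus B(0, \eta)$ holds in a form uniform enough to close the iteration; this is classical for odd $d$ but must be arranged carefully at the boundary. Apart from these bookkeeping issues, the argument is a direct adaptation of the small-data theory of Section 3 to the exterior setting.
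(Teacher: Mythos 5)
Your skeleton is the same as the paper's: a small--data contraction for \eqref{s513} built on exterior--domain Strichartz estimates (the paper gets these from the non--trapping obstacle estimates of \cite{hmssz} together with the weighted embedding argument behind \eqref{s33a}; your ``reflection/extension'' alternative does not work for the Dirichlet problem exterior to a ball in general odd $d$, though your fallback appeal to exterior Strichartz theory is the right one), followed by Duhamel and an $L^1_tL^2_x$ bound on $N_R$ split across the cone $r=R+|t|$. The genuine gap is in your nonlinear estimates, which you state with only $\|h\|_{L^\infty_t\dot H^1}$ on the right and justify by the claim that ``in both regimes'' a decaying factor $(R+|t|)^{-p}$ appears and integrates to a negative power of $R$. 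For the cubic term this is false: by \eqref{s516} and Definition \ref{d56}, inside the cone the coefficient is $(R+|t|)^{d-5}$, which \emph{grows} in time once $d\geq 7$, while the spatial integral over $\eta\leq r\leq R+|t|$ is dominated by $r\sim\eta$ and yields no compensating decay; with only $L^\infty_t\dot H^1$ control (and even with a crude use of the weighted $L^3_tL^6_x$ norm) the $t$--integral there does not converge, let alone produce the factor $R^{-1}$ in \eqref{s518}. The exterior region is indeed harmless (Strauss gives $\|G_{e,R}(t)\|_{L^2_x(r\geq R+|t|)}\lesssim (R+|t|)^{-2}\|h\|^3_{\dot H^1}$), but the interior region is exactly where the truncation of Definition \ref{d56} has to be handled with care, and where the paper passes through the dispersive bound $\|G_{e,R}\|_{L^1_tL^2_x}\lesssim R^{-1}\|r^{d-4}h^3\|_{L^1_tL^2_x}\lesssim R^{-1}\|r^{(d-4)/3}h\|^3_{L^3_tL^6_x}$; in other words the $L^3_tL^6_x$ component of your space $X$ must do real work in the contraction and error estimates, whereas your write--up never actually uses it.

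For the quadratic term you correctly identify that the interior--cone region gives only $R^{-2}$ via Strauss, but your proposed repair does not close the step: replacing Strauss by the weighted $L^6$ control forces a H\"older pairing of $(R+|t|)^{-3}$ in $L^3_t$ against $\|r^{(d-4)/3}h(t)\|^2_{L^6_x}\in L^{3/2}_t$, and $\|(R+|t|)^{-3}\|_{L^3_t}\sim R^{-8/3}$, which is strictly weaker than the stated $R^{-d/2}$ once $d\geq 7$. The $R^{-d/2}$ power is produced only by the exterior region, where \eqref{s515} supplies the spatial decay $r^{-3}$ and Strauss yields $\|F_{e,R}(t)\|_{L^2_x(r\geq R+|t|)}\lesssim (R+|t|)^{-(d+2)/2}\|h\|^2_{\dot H^1}$, as you note. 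So, as written, your proposal leaves both the $R^{-d/2}$ coefficient and the $R^{-1}$ coefficient in \eqref{s518} unproved: the first is acknowledged but not repaired by the fix you sketch, and the second is asserted on the basis of a time--decay structure that the modified nonlinearity simply does not have inside the cone when $d\geq 7$.
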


\begin{proof}
	For the proof, we use the shorthand notation $\R^d_* = \R^d \backslash B(0,\eta)$.  
	The small data global well--posedness and spacetime estimate \eqref{s517} follow from standard contraction mapping and continuity 
	arguments using the following Strichartz estimate: 
	if $h$ is a radial solution to  $\p_t^2 h - \Delta h = F$ on $\R \times \R^d_*$ with $h(t,\eta) = 0$, $\forall t$, then 
	\begin{align*}
	\left \| r^{(d-4)/3} h \right \|_{L^3_tL^6_x(\R \times \R^d_*)} 
	\lesssim \| \vec h(0) \|_{\h(r \geq \eta)} + \| F \|_{L^1_t L^2_x(\R \times \R^d_*)}. 
	\end{align*}
	This estimate follows from \cite{hmssz} and an argument similar to the one used to establish \eqref{s33a}.  Rather than 
	give the details for proving \eqref{s517}, we  
	prove \eqref{s518} since the argument is similar.  By the Duhamel formula and Strichartz estimates we have
	\begin{align*}
	\sup_{t \in \R} \| \vec h(t) - \vec h_L(t) \|_{\h(r \geq \eta)} &\lesssim \| N_R(\cdot, \cdot, h) \|_{L^1_t L^2_x(\R \times \R^d_*)} \\
	&\lesssim \| V_{e,R} h \|_{L^1_t L^2_x(\R \times \R^d_*)} + \| F_{e,R}(\cdot, \cdot, h) \|_{L^1_t L^2_x(\R \times \R^d_*)} \\ &\: + 
	\| G_{e,R}(\cdot, \cdot, h) \|_{L^1_t L^2_x(\R \times \R^d_*)}.
	\end{align*}
	The third term is readily estimated by \eqref{s516} and \eqref{s517}
	\begin{align*}
	\| G_{e,R}(\cdot, \cdot, h) \|_{L^1_t L^2_x(\R \times \R^d_*)} 
	\lesssim R^{-1} \| r^{d-4} h^3 \|_{L^1_t L^2_x(\R \times \R^d_*)} 
	\lesssim R^{-1} \| \vec h(0) \|_{\h(r \geq \eta)}^3.  
	\end{align*}
	For the first term we have 
	\begin{align*}
	\| V_{e,R} h \|_{L^1_t L^2_x(\R \times \R^d_*)} \leq 
	\left \|  r^{-(d-4)/3}V_{e,R} \right \|_{L^{3/2}_t L^{3}_x(\R \times \R^d_*)}
	\left \| r^{(d-4)/3} h \right \|_{L^3_t L^6_x(\R \times \R^d_*)}.  
	\end{align*}
	By \eqref{s514} 
	\begin{align*}
	\left \|  r^{-(d-4)/3}V_{e,R} \right \|_{L^{3/2}_t L^{3}_x(\R \times \R^d_*)} \lesssim R^{-2}. 
	\end{align*}
	Thus, by \eqref{s517}
	\begin{align*}
	\left \|  r^{-(d-4)/3}V_{e,R} \right \|_{L^{3/2}_t L^{3}_x(\R \times \R^d_*)}
	\left \| r^{(d-4)/3} h \right \|_{L^3_t L^6_x(\R \times \R^d_*)} \lesssim R^{-2} \| \vec h(0) \|_{\h(r \geq \eta)}.  
	\end{align*}
	Similarly, using \eqref{s515}, \eqref{s517} and the Strauss estimate valid for all radial $f \in C^\infty_0(\R^d_*)$
	\begin{align*}
	|f(r)| \lesssim r^{\frac{2-d}{2}} \| \nabla f \|_{L^2(\R^d_*)},
	\end{align*}
	we conclude that $\| F_{e,R}(\cdot, \cdot,h) \|_{L^1_t L^2_x(\R \times \R^d_*)} \lesssim R^{-d/2} \| h(0) \|^2_{\h(r \geq \eta)}$ which proves \eqref{s518}. 
\end{proof}

\begin{proof}[Proof of Lemma \ref{l55}]
	We first prove Lemma \ref{l55} for $t = 0$.  For $R > \eta$, define the truncated initial data $\vec u_R(0) = (u_{0,R}, 
	u_{1,R}) \in \h_0(r \geq \eta)$ via 
	\begin{align}
	u_{0,R}(r) &= 
	\begin{cases}\label{s519}
	u_e(0,r) \quad &\mbox{if } r \geq R, \\
	\frac{r - \eta}{R-\eta} u_e(0,R) \quad &\mbox{if } r < R,
	\end{cases} \\
	u_{1,R}(r) &= 
	\begin{cases}\label{s520}
	\p_t u_e(0,r) \quad &\mbox{if } r \geq R, \\
	0 \quad &\mbox{if } r < R.
	\end{cases}
	\end{align}
	Note that for $R$ large, 
	\begin{align}\label{s521}
	\| \vec u_R (0) \|_{\h(r \geq \eta)} \lesssim \| \vec u_e(0) \|_{\h(r \geq R)}.
	\end{align}
	In particular, by \eqref{s56} there exists $R_0 \geq 1$ such that for all $R \geq R_0$, $\| \vec u_R(0) \|_{\cl H(r \geq \eta)} \leq \de_0$
	where $\de_0$ is from Lemma \ref{l57}.  Let $u_R(t)$ be the solution to \eqref{s513} with initial data $(u_{0,R}, u_{1,R})$, and let
	$\vec u_{R,L}(t) \in \h_0(r \geq \eta)$ be the solution to the free wave equation $\p_t^2 u_{R,L} - \Delta u_{R,L}
	= 0,$ $(t,x) \in \R \times \R^d_*$, $\vec u_{R,L}(0) = (u_{0,R}, u_{1,R})$. By finite speed of propagation 
	\begin{align*}
	r \geq R + |t| \implies \vec u_R(t,r) = \vec u_e(t,r).
	\end{align*}  
	By Proposition \ref{p54}, for all $t \geq 0$ or for all $t \leq 0$, 
	\begin{align*}
	\| \pi_R^{\perp} \vec u_{R,L}(0) \|_{\h(r \geq R)} \lesssim \| \vec u_{R,L}(t) \|_{\h(r \geq R + |t|)}. 
	\end{align*}
	Suppose, without loss of generality, that the above bound holds for all $t \geq 0$.  By \eqref{s518} we conclude that for all $t \geq 0$
	\begin{align*}
	\| \vec u_e(t) \|_{\h(r \geq R + |t|)} &\geq \| \vec u_{R,L}(t) \|_{\h(r \geq R + |t|)} - \| \vec u_R(t) - \vec u_{R,L}(t) \|_{\h(r \geq \eta)} \\
	&\geq c \| \pi^{\perp}_R \vec u_{R,L}(0) \|_{\h(r \geq R)} - C \Bigl [ R^{-2} \| u_R(0) \|_{\h(r \geq \eta)} + 
	R^{-d/2} \| \vec u_R(0) \|_{\h(r \geq \eta)}^2 + R^{-1} \| \vec u_R(0) \|^3_{\h(r \geq \eta)} \Bigr ]. 
	\end{align*}
	Letting $t \rar \infty$ and using the decay property \eqref{s56} and the definition of $(u_{0,R}, u_{1,R})$, we conclude that 
	\begin{align*}
	\| \pi^{\perp}_R \vec u_e(0) \|_{\h(r \geq R)} \lesssim 
	R^{-2} \| u_e(0) \|_{\h(r \geq R)} + 
	R^{-d/2} \| \vec u_e(0) \|_{\h(r \geq R)}^2 + R^{-1} \| \vec u_e(0) \|^3_{\h(r \geq R)}.
	\end{align*}
	Note that $\| \vec u_e(0) \|_{\h(r \geq R)}^2 = \| \pi^{\perp}_R \vec u_e(0) \|_{\h(r \geq R)}^2 + 
	\| \pi_R \vec u_e(0) \|_{\h(r \geq R)}^2$.  Thus, if we take $R_0$ large enough to absorb terms involving 
	$\| \pi^{\perp}_R \vec u_e(0) \|_{\h(r \geq R)}$ into the left hand side in the previous estimate, we obtain for all
	$R \geq R_0$ 
	\begin{align*}
	\| \pi^{\perp}_R \vec u_e(0) \|_{\h(r \geq R)} \lesssim 
	R^{-2} \| \pi_R u_e(0) \|_{\h(r \geq R)} + 
	R^{-d/2} \| \pi_R \vec u_e(0) \|_{\h(r \geq R)}^2 + R^{-1} \| \pi_R \vec u_e(0) \|^3_{\h(r \geq R)},
	\end{align*}
	as desired.  This proves Lemma \ref{l55} for $t = 0$. 
	
	For general $t = t_0$ in \eqref{s513a}, we first set 
	\begin{align*}
	u_{0,R,t_0} &= 
	\begin{cases}
	u_e(t_0,r) \quad &\mbox{if } r \geq R, \\
	\frac{r - \eta}{R-\eta} u_e(t_0,R) \quad &\mbox{if } r < R,
	\end{cases} \\
	u_{1,R,t_0} &= 
	\begin{cases}
	\p_t u_e(t_0,r) \quad &\mbox{if } r \geq R, \\
	0 \quad &\mbox{if } r < R.
	\end{cases}
	\end{align*}
	By \eqref{s56} 
	we can find $R_0 = R_0(\de_0)$ independent of $t_0$ such that for all 
	$R \geq R_0$
	\begin{align*}
	\| (u_{0,R,t_0}, u_{1,R,t_0}) \|_{\h(r \geq \eta)} \lesssim \| \vec u_e(t_0) \|_{\h(r \geq R)} \lesssim \de_0.
	\end{align*}
	The previous argument for $t_0 = 0$ repeated with obvious modifications yield \eqref{s513a} for $t = t_0$. 
\end{proof}

Before proceeding to the next step, we reformulate the conclusion of Lemma \ref{l55} using the projection coefficients
$\lam_i(t,R), \mu_j(t,R)$ for $\vec u_e(t)$.  
The following is an immediate consequence of Lemma \ref{l54} and Lemma \ref{l55}.

\begin{lem}\label{l58}
	Let $\lam_i(t,R),\mu_j(t,R)$, $1 \leq i \leq \tilde k$, $1 \leq j \leq k$ be the projection coefficients as in \eqref{s522}.  Then there exists $R_0 \geq 1$ such that uniformly in $R > R_0$ and $t \in \R$
	\begin{align*}
	\int_R^\infty
	\sum_{i = 1}^{\tilde k} \Bigl ( \p_r \lam_i(t,r) r^{2i - \frac{d+1}{2}} \Bigr )^2 
	&+ \sum_{j = 1}^{k} \Bigl ( \p_r \mu_j(t,r) r^{2i - \frac{d-1}{2}} \Bigr )^2 dr \\
	&\lesssim  
	\sum_{i = 1}^{\tilde k} R^{4i - d - 6} |\lam_i(t,R)|^2 + R^{8i - 3d - 4} |\lam_i(t,R)|^4
	+ R^{12i - 3d - 8} |\lam_i(t,R)|^6 \\
	&\:+ \sum_{i = 1}^{k} R^{4i - d - 4} |\mu_i(t,R)|^2 + R^{8i - 3d} |\mu_i(t,R)|^4
	+ R^{12i - 3d-2} |\mu_i(t,R)|^6.
	\end{align*} 
\end{lem}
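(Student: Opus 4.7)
The plan is to combine the estimate of Lemma \ref{l55} with the two equivalences from Lemma \ref{l54}, reading off both sides of \eqref{s513a} in terms of the projection coefficients $\lam_i(t,R)$ and $\mu_j(t,R)$. The argument is purely algebraic once those two lemmas are in hand, so I do not expect any real obstacle; the only work is matching powers of $R$.

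First I would square the conclusion of Lemma \ref{l55} and use the trivial bound $(a+b+c)^2 \leq 3(a^2+b^2+c^2)$ for nonnegative reals to obtain
\begin{align*}
\| \pi_R^{\perp} \vec u_e(t) \|_{\h(r \geq R)}^2
&\lesssim R^{-4} \| \pi_R \vec u_e(t) \|_{\h(r \geq R)}^2 + R^{-d} \| \pi_R \vec u_e(t) \|_{\h(r \geq R)}^4 + R^{-2} \| \pi_R \vec u_e(t) \|_{\h(r \geq R)}^6.
\end{align*}
By the second equivalence in Lemma \ref{l54}, the left-hand side above is comparable, up to constants depending only on $d$, to the integral appearing on the left of the desired estimate.

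Next, using the first equivalence in Lemma \ref{l54}, we have
\begin{align*}
\| \pi_R \vec u_e(t) \|_{\h(r \geq R)}^2 \simeq \sum_{i=1}^{\tk} \lam_i(t,R)^2 R^{4i-d-2} + \sum_{j=1}^{k} \mu_j(t,R)^2 R^{4j-d}.
\end{align*}
Applying the elementary inequality $\bigl(\sum_\ell a_\ell\bigr)^m \lesssim_m \sum_\ell a_\ell^m$ for nonnegative reals and $m \in \N$, we bound the $m$-th power of $\|\pi_R \vec u_e(t)\|_{\h(r\geq R)}^2$ by a sum of pure powers of the individual terms $\lam_i(t,R)^2 R^{4i-d-2}$ and $\mu_j(t,R)^2 R^{4j-d}$. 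Substituting these into the squared estimate for $m=1,2,3$ and collecting the resulting exponents of $R$ yields exactly the six terms in the stated inequality: for instance, $R^{-d}(\lam_i^2 R^{4i-d-2})^2 = R^{8i-3d-4}|\lam_i(t,R)|^4$, $R^{-2}(\mu_j^2 R^{4j-d})^3 = R^{12j-3d-2}|\mu_j(t,R)|^6$, and so on.

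Finally, since the constants in Lemma \ref{l54} depend only on $d$ and the threshold $R_0$ of Lemma \ref{l55} is chosen uniformly in $t \in \R$, the resulting bound holds uniformly in $R > R_0$ and $t \in \R$, as claimed.
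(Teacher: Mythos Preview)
Your proposal is correct and is precisely the argument the paper intends: the paper states that Lemma \ref{l58} is an immediate consequence of Lemma \ref{l54} and Lemma \ref{l55} and gives no further details, so your explicit squaring of \eqref{s513a}, application of the two equivalences in Lemma \ref{l54}, and verification of the $R$-exponents is exactly the computation being implicitly invoked.
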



\subsubsection*{Step 2: Asymptotics for $\vec u_e(0)$}  

In this step, we prove that $\vec u_e(0)$ has the asymptotic expansions \eqref{s510}, \eqref{s511} which we now formulate as 
a proposition.


\begin{ppn}\label{p58}  
	Let $u_e$ be a solution to \eqref{s52e} which satisfies \eqref{s56}.   Let $\vec u_e(0) 
	= (u_{e,0},u_{e,1})$.  Then there exists $\alpha \in \R$ such that 
	\begin{align*}
	r^{d-2} u_{e,0}(r) &= \al + O(r^{-2}),  \\
	\int_r^\infty u_{e,1}(\rho) \rho^{2j-1} d\rho &= O(r^{2j - d -1} ), \quad j = 1, \ldots, k, 
	\end{align*}  
	as $r \rar \infty$.  
\end{ppn}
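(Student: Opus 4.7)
The plan is to translate the proposition into statements about the projection coefficients $\lambda_i(0,r)$ ($i=1,\ldots,\tilde k$) and $\mu_j(0,r)$ ($j=1,\ldots,k$), and then extract the required rates by bootstrapping Lemma \ref{l58} against the initial decay coming from the compactness property \eqref{s56}. Using the representation formula $u_e(0,r)=\sum_{j=1}^{\tilde k}\lambda_j(0,r)r^{2j-d}$, the claim $r^{d-2}u_{e,0}(r)=\alpha+O(r^{-2})$ reduces to $\lambda_1(0,r)\to\alpha$ with rate $O(r^{-2})$ together with $\lambda_j(0,r)=O(r^{-2j})$ for $j\geq 2$; the second formula in Lemma \ref{l54} reduces the claim on the integrals of $u_{e,1}$ to $\mu_j(0,r)=O(r^{-2j-1})$ for $j=1,\ldots,k$.

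First I would establish the initial decay of the coefficients. From \eqref{s56} we have $\|\vec u_e(0)\|_{\h(r\geq R)}\to 0$ as $R\to\infty$, and in particular $\|\pi_R\vec u_e(0)\|_{\h(r\geq R)}\to 0$. The equivalence in Lemma \ref{l54},
\begin{align*}
\|\pi_R\vec u_e(0)\|_{\h(r\geq R)}^2\simeq\sum_{i=1}^{\tilde k}\left(\lambda_i(0,R)R^{2i-\frac{d+2}{2}}\right)^2+\sum_{j=1}^{k}\left(\mu_j(0,R)R^{2j-\frac{d}{2}}\right)^2,
\end{align*}
then yields the crude bounds $\lambda_i(0,R)=o(R^{(d+2)/2-2i})$ and $\mu_j(0,R)=o(R^{d/2-2j})$ as $R\to\infty$. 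These are the starting point of the bootstrap.

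Next I would feed these crude bounds into the right-hand side of Lemma \ref{l58}. The quadratic contribution $R^{4l-d-6}|\lambda_l(0,R)|^2$ is then of size $o(R^{-4})$, and likewise for the $\mu_l$ piece; the quartic and sextic terms are strictly smaller once the crude bounds are substituted. This provides weighted $L^2$ control on each $\partial_r\lambda_i(0,\cdot)$ and $\partial_r\mu_j(0,\cdot)$ in the tail. Applying Cauchy--Schwarz to the identity
\begin{align*}
\lambda_i(0,r_1)-\lambda_i(0,r_2)=-\int_{r_1}^{r_2}\partial_r\lambda_i(0,\rho)\,d\rho
\end{align*}
(and the analogous identity for $\mu_j$) converts the derivative bound back into improved pointwise rates on the coefficients themselves. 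Iterating this scheme---substituting each improved bound into Lemma \ref{l58}, then re-running the Cauchy--Schwarz conversion---upgrades the decay rate by a polynomial factor at each stage. The iteration terminates at the sharp rates $\lambda_j(0,r)=O(r^{-2j})$ for $j\geq 2$ and $\mu_j(0,r)=O(r^{-2j-1})$, at which point the right-hand side of Lemma \ref{l58} cannot be improved further by the procedure. At this sharp stage $\partial_r\lambda_1(0,\cdot)$ becomes absolutely integrable at infinity (with explicit rate), so $\lambda_1(0,r)$ is Cauchy as $r\to\infty$ and converges to some $\alpha\in\R$; integrating the derivative bound from $r$ to $\infty$ gives the quantitative remainder $\lambda_1(0,r)=\alpha+O(r^{-2})$.

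The main obstacle will be controlling the Cauchy--Schwarz step. Because $\tilde k\leq\lfloor(d+2)/4\rfloor$, the exponent $d+1-4i$ appearing in the dual weight is nonnegative for every relevant index, so the naive integral $\int_R^{\infty}r^{d+1-4i}\,d\rho$ is divergent and one cannot pass directly to a limit using only a single application of Lemma \ref{l58}. The bootstrap must therefore be carried out against finite intervals $[r,r']$ and the polynomial growth of the dual weight has to be compensated by the polynomial gain in the bound on $\partial_r\lambda_i$ at each iteration; careful bookkeeping of the exponents is needed to verify that the iteration strictly improves and converges to the stated sharp rates, and that the coupling terms on the right-hand side of Lemma \ref{l58} remain of lower order throughout. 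Once the coefficient asymptotics are in hand, the stated expansions for $u_{e,0}$ and for the integrals of $u_{e,1}$ follow directly from the explicit formulas in Lemma \ref{l54}.
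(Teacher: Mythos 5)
There is a genuine gap in your plan: the fixed-time bootstrap of Lemma \ref{l58} against Cauchy--Schwarz cannot, by itself, produce any decay of the projection coefficients, let alone the sharp rates $\lambda_j=O(r^{-2j})$, $\mu_j=O(r^{-2j-1})$. The coefficients $\lambda_i(t,R),\mu_j(t,R)$ parametrize exactly the component of the data along the plane $P(R)$, which is invisible to the exterior energy estimate underlying Lemma \ref{l55}/\ref{l58} (the left side of \eqref{s512} vanishes identically on $P(R)$). Concretely, when you insert the crude compactness bounds $|\lambda_i|=o(R^{(d+2)/2-2i})$, $|\mu_j|=o(R^{d/2-2j})$ into the dyadic difference estimates, the dominant contribution to $|\lambda_j(t,2r)-\lambda_j(t,r)|$ is of size $C\delta_1|\lambda_j(t,r)|$ plus cross terms of comparable size (this is Corollary \ref{c510}), so the recursion only gives a multiplicative factor $(1+C\delta_1)<2^\epsilon$ per dyadic step, i.e.\ $\epsilon$-growth bounds like $|\lambda_{\tilde k}(t,r)|\lesssim r^{\epsilon}$ (Lemma \ref{l511}, Lemma \ref{l522}), not decay. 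Nothing in a single-time argument rules out $\lambda_{\tilde k}(0,r)$ or $\mu_k(0,r)$ tending to a nonzero constant; static-looking tails $r^{2i-d}$ are perfectly consistent with all the estimates you invoke. Your claim that the iteration ``terminates at the sharp rates'' is therefore unjustified, and in fact false as a consequence of those inputs alone.

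What the paper actually does, and what your proposal is missing, is the dynamical input. After the $\epsilon$-growth step, one runs an induction in $P$ (Propositions \ref{p513} and \ref{p526}): at each stage the top remaining coefficients $\lambda_{\tilde k-P}(t,r)$ and $\mu_{k-P}(t,r)$ are shown to have spatial limits $\alpha_{\tilde k-P}(t)$, $\beta_{k-P}(t)$; these limits are then killed using the equation in time, namely the identities of Lemma \ref{l511a} (in particular \eqref{s532b}--\eqref{s532c}), the nonvanishing of the constant $C_0$ (computed by contour integration in \cite{klls2}), which shows a nonzero $\alpha_{\tilde k-P}$ would force $\beta_{k-P}(t)$ to grow linearly in $t$, contradicting its boundedness, and then a time-averaging argument over $[0,T]$ with $R=T\to\infty$, exploiting the \emph{uniform-in-time} compactness \eqref{s56}, to get $\beta_{k-P}=0$. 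Only after these vanishings can the difference estimates be rerun to upgrade the decay, and the case $\ell$ odd requires a separate preliminary step to eliminate the extra coefficient $\mu_k$ (Lemmas \ref{l523}--\ref{l524}). Your proposal never evaluates the solution at times $t\neq 0$ and never uses the flow, so it cannot bridge from the $r^{\epsilon}$ plateau to the stated asymptotics; the final passage from the coefficient bounds to the expansions for $u_{e,0}$ and $\int_r^\infty u_{e,1}\rho^{2j-1}d\rho$ via Lemma \ref{l54} is the only part that matches the paper's argument.
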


The proof of Proposition \ref{p58} is split up into a several lemmas.  First, we use Lemma \ref{l58} to prove the following difference 
estimate for the projection coefficients. 

\begin{lem}\label{l59}
	Let $\de_1 \leq \de_0$ where $\de_0$ is from Lemma \ref{l57}.  Let $R_1 \geq R_0 > 1$ be large enough so that 
	for all $R \geq R_1$ and for all $t \in \R$
	\begin{align*}
	\| \vec u_e(t) \|_{\h(r \geq R)} &\leq \de_1, \\
	R^{-2} &\leq \de_1. 
	\end{align*}
	Then for all $r, r'$ with $R_1 \leq r \leq r' \leq 2r$ and uniformly in $t$ 
	\begin{align} 
	\begin{split} \label{s528}
	|\lam_j(t,r) - \lam_j(t,r')|
	&\lesssim 
	r^{-2j + 1} \sum_{i = 1}^{\tilde k} r^{2i - 3} |\lam_i(t,r)| + r^{4i - d - 2} |\lam_i(t,r)|^2
	+ r^{6i - d - 4} |\lam_i(t,r)|^3 \\
	&\:+ r^{-2j+1} \sum_{i = 1}^{k} r^{2i - 2} |\mu_i(t,r)|^2 + r^{4i - d} |\mu_i(t,r)|^2
	+ r^{6i - d-1} |\mu_i(t,r)|^3,
	\end{split}
	\end{align}
	and
	\begin{align}
	\begin{split} \label{s529}
	|\mu_j(t,r) - \mu_j(t,r')| &\lesssim 
	r^{-2j} \sum_{i = 1}^{\tilde k} r^{2i - 3} |\lam_i(t,r)| + r^{4i - d - 2} |\lam_i(t,r)|^2
	+ r^{6i - d - 4} |\lam_i(t,r)|^3 \\
	&\:+ r^{-2j} \sum_{i = 1}^{k} r^{2i - 2} |\mu_i(t,r)|^2 + r^{4i - d} |\mu_i(t,r)|^2
	+ r^{6i - d-1} |\mu_i(t,r)|^3.
	\end{split}
	\end{align}
\end{lem}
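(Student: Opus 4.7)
The plan is to combine the fundamental theorem of calculus with Cauchy--Schwarz and invoke Lemma \ref{l58} at scale $R = r$.  The hypotheses on $R_1$ and $\de_1$ guarantee that Lemma \ref{l58} is applicable at every scale $R \geq R_1$; in particular it may be applied at $R = r$ for each $r \geq R_1$.

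To prove \eqref{s528}, I would first write
\[
\lam_j(t,r') - \lam_j(t,r) = \int_r^{r'} \p_\rho \lam_j(t,\rho)\, d\rho,
\]
and apply Cauchy--Schwarz with the weight $\rho^{(4j - d - 1)/2}$ matched to the $\lam$--integrand appearing in Lemma \ref{l54}:
\[
|\lam_j(t,r') - \lam_j(t,r)| \leq \Bigl ( \int_r^\infty |\p_\rho \lam_j|^2 \rho^{4j - d - 1}\, d\rho \Bigr )^{1/2} \Bigl ( \int_r^{r'} \rho^{d + 1 - 4j}\, d\rho \Bigr )^{1/2}.
\]
The first factor is controlled by the square root of the right--hand side in Lemma \ref{l58} applied at $R = r$, while for the second factor I note that $d = 2\ell + 3$ is odd and $4j$ is even, so $d + 2 - 4j \neq 0$ and the elementary integral is never logarithmic; using $r' \leq 2r$ then yields the bound $\lesssim r^{d + 2 - 4j}$ in every case (the sign of the exponent just determines whether the endpoint $r$ or $r'$ dominates), producing a prefactor of $r^{(d + 2 - 4j)/2}$.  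The estimate \eqref{s529} for $\mu_j$ is handled identically, using the weight $\rho^{(4j - d + 1)/2}$ matched to the $\mu$--integrand in Lemma \ref{l54} and yielding the prefactor $r^{(d - 4j)/2}$.

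The remaining work is bookkeeping of exponents, which I expect to be the main source of tedium but not difficulty.  As a representative check, the summand $r^{4i - d - 6} |\lam_i(t,r)|^2$ in Lemma \ref{l58} contributes, after taking $\sqrt{\cdot}$ and multiplying by the prefactor $r^{(d + 2 - 4j)/2}$, a term
\[
r^{(d + 2 - 4j)/2} \cdot r^{2i - (d + 6)/2} |\lam_i(t,r)| = r^{2i - 2j - 2} |\lam_i(t,r)| = r^{-2j + 1} \cdot r^{2i - 3} |\lam_i(t,r)|,
\]
which is precisely the first summand in \eqref{s528}; the quartic and sextic contributions, as well as the $\mu_i$ contributions and their $\mu_j$ analogues, all match term by term in the same way (using that $\sqrt{a+b} \leq \sqrt{a} + \sqrt{b}$ to convert the square root of the sum from Lemma \ref{l58} into a sum of separate terms).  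There is no real analytical obstacle beyond the parity observation that rules out a logarithmic integral.
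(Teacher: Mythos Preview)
Your proposal is correct and follows essentially the same route as the paper: fundamental theorem of calculus, Cauchy--Schwarz with the weight $\rho^{2j - (d+1)/2}$ (resp.\ $\rho^{2j - (d-1)/2}$) matched to Lemma \ref{l54}, then invoke Lemma \ref{l58} at scale $R = r$ and read off the exponents. Your parity remark that $d + 2 - 4j$ and $d - 4j$ are nonzero (so the weight integral is never logarithmic) is a detail the paper leaves implicit.
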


\begin{proof}
	By the fundamental theorem of calculus and Lemma \ref{l58} we have, for all $r, r'$ such that $R_1 \leq r \leq r' \leq 2r$, 
	\begin{align*}
	|\lam_j(t,r) - \lam_j(t,r')|^2 &=
	\left ( \int_r^{r'} \p_\rho \lam_j(t,\rho) d\rho \right )^2 \\
	&\leq \left ( \int_r^{r'} \rho^{-4j + d +1} d\rho \right ) \left ( \int_r^{r'} \left ( \rho^{2j-\frac{d+1}{2}} \p_{\rho} \lam(t,\rho) \right )^2 d\rho \right ) \\
	&\lesssim r^{-4j+d+2} \sum_{i = 1}^{\tilde k} r^{4i - d - 6} |\lam_i(t,r)|^2 + r^{8i - 3d - 4} |\lam_i(t,r)|^4
	+ r^{12i - 3d - 8} |\lam_i(t,r)|^6 \\
	&\:+ r^{-4j+d+2} \sum_{i = 1}^{k} r^{4i - d - 4} |\mu_i(t,r)|^2 + r^{8i - 3d} |\mu_i(t,r)|^4
	+ r^{12i - 3d-2} |\mu_i(t,r)|^6
	\end{align*}
	which proves \eqref{s528}. 
	
	Similarly, we have 
	\begin{align*}
	|\mu_j(t,r) - \mu_j(t,r')|^2
	&\leq r^{-4j+d} \left ( \int_r^{r'} ( \rho^{2j - \frac{d-1}{2}} \p_{\rho} \mu_j(t,\rho) )^2 d\rho \right ) \\
	&\lesssim r^{-4j+d} \sum_{i = 1}^{\tilde k} r^{4i - d - 6} |\lam_i(t,r)|^2 + r^{8i - 3d - 4} |\lam_i(t,r)|^4
	+ r^{12i - 3d - 8} |\lam_i(t,r)|^6 \\
	&\:+ r^{-4j+d} \sum_{i = 1}^{k} r^{4i - d - 4} |\mu_i(t,r)|^2 + r^{8i - 3d} |\mu_i(t,r)|^4
	+ r^{12i - 3d-2} |\mu_i(t,r)|^6
	\end{align*}
	which proves \eqref{s529}. 
\end{proof}

We note that with $\delta_1$ and $R_1$ fixed as in Lemma \ref{l59}, we have by Lemma \ref{l54} for all $r \geq R_1$ 
and uniformly in time 
\begin{align}
\begin{split}\label{s527}
|\lam_i(t,r)| &\lesssim \de_1 r^{\frac{d+2}{2}-2i}, \quad \forall 1 \leq i \leq \tilde k, \\
|\mu_j(t,r)| &\lesssim \de_1 r^{\frac{d}{2} - 2j}, \quad \forall 1 \leq j \leq k. 
\end{split}
\end{align}
By using this observation, a simple consequence of Lemma \ref{l59} is the following.  

\begin{cor}\label{c510}
	Let $\delta_1$ and $R_1$ be as in Lemma \ref{l59}.  Then for all $r,r'$ with $R_1 \leq r \leq r' \leq 2r$ and for all $t \in \R$ 
	\begin{align}
	|\lam_j(t,r) - \lam_j(t,r')| &\lesssim \de_1 \left ( \sum_{i = 1}^{\tilde k} r^{2i-2j} |\lam_i(t,r)| +
	\sum_{i = 1}^k r^{2i-2j+1} |\mu_i(t,r)| \right ), \label{s530} \\
	|\mu_j(t,r) - \mu_j(t,r')| &\lesssim r^{-1} \de_1 \left ( \sum_{i = 1}^{\tilde k} r^{2i-2j} |\lam_i(t,r)| +
	\sum_{i = 1}^k r^{2i-2j+1} |\mu_i(t,r)| \right ). \label{s531}
	\end{align}
\end{cor}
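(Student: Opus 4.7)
The plan is to substitute the pointwise bounds \eqref{s527} into the right-hand sides of \eqref{s528} and \eqref{s529}, thereby converting each quadratic or cubic expression in $|\lam_i|, |\mu_i|$ into a linear expression at the cost of extra powers of $\delta_1$ together with a (nonpositive) power of $r$. Since the hypotheses of Lemma \ref{l59} force $R_1^{-2} \leq \de_1$, any surplus factor of $r^{-2}$ (with $r \geq R_1$) can be absorbed into a further factor of $\de_1$, which is what produces the clean geometric-series form on the right-hand sides of \eqref{s530} and \eqref{s531}.

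For \eqref{s530}, I process the $\lam$-sum in \eqref{s528} term by term. The linear contribution $r^{-2j+1} r^{2i-3} |\lam_i(t,r)| = r^{2i-2j-2} |\lam_i(t,r)|$ already has the target power up to the extra $r^{-2} \leq \de_1$. The quadratic contribution $r^{-2j+1} r^{4i-d-2} |\lam_i(t,r)|^2$ becomes, upon using $|\lam_i(t,r)| \lesssim \de_1 r^{(d+2)/2 - 2i}$ in one factor, a multiple of $\de_1 r^{2i-2j-d/2}|\lam_i(t,r)|$, which for $d \geq 5$ is bounded by $\de_1 r^{2i-2j}|\lam_i(t,r)|$. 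The cubic contribution, treated by applying \eqref{s527} twice, produces $\de_1^2 r^{2i-2j-1}|\lam_i(t,r)|$, and $\de_1^2 \leq \de_1$. The $\mu$-sum in \eqref{s528} is processed in the same fashion, producing contributions controlled by $\de_1 r^{2i-2j+1}|\mu_i(t,r)|$, which matches the target in \eqref{s530}.

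The bound \eqref{s531} follows by the same computation applied to \eqref{s529}, with the single change that the outer prefactor there is $r^{-2j}$ rather than $r^{-2j+1}$; this is precisely the source of the additional $r^{-1}$ appearing on the right-hand side of \eqref{s531}, and no new ideas are required.

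The only obstacle is the bookkeeping of exponents: for each $1 \leq i \leq \tilde k$, $1 \leq j \leq k$ and each of the linear, quadratic, and cubic pieces arising from \eqref{s528}--\eqref{s529}, one must check that the total power of $r$ after invoking \eqref{s527} exceeds the target by at most $-2$ in absolute value, so that the bound $R_1^{-2} \leq \de_1$ can be used to close the estimate. Since $d = 2\ell + 3 \geq 5$, there is comfortable room in every case, and the corollary follows directly.
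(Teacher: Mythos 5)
Your proposal is correct and matches the paper's argument: the paper likewise obtains Corollary \ref{c510} by inserting the bounds $|\lam_i(t,r)| \lesssim \de_1 r^{\frac{d+2}{2}-2i}$, $|\mu_i(t,r)| \lesssim \de_1 r^{\frac{d}{2}-2i}$ (valid for $r \geq R_1$ by Lemma \ref{l54}) into the difference estimates \eqref{s528}--\eqref{s529}, and absorbing the leftover $r^{-2} \leq R_1^{-2} \leq \de_1$ on the linear terms. Your exponent bookkeeping checks out, so no further changes are needed.
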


Before proceeding further, we state point wise and averaged difference (in time) estimates for the projection coefficients that will 
be used in the sequel.  

\begin{lem}[Lemma 5.10, Lemma 5.12 \cite{klls2}]\label{l511a}
	For each $R > 0$, $r \geq R$, and $t_1 \neq t_2$ with $(t_j,r) \in \{ r \geq R + |t| \}$ we have for 
	any $1 \leq j \leq j' \leq \tilde k$
	\begin{align}
	\begin{split}\label{s532a}
	|\lam_j(t_1,r) &- \lam_j(t_2,r)| \\ 
	&\lesssim r^{2j' - 2j} |\lam_{j'}(t_1,r) - \lam_{j'}(t_2,r)| + \sum_{m = 1}^k \left | r^{2m-2j} \int_{t_2}^{t_1} 
	\mu_m(t,r) dt \right |,
	\end{split}
	\end{align}
	as well as for any $1 \leq j \leq k$ 
	\begin{align}\label{s532b}
	\frac{1}{R} \int_R^{2R} \mu_j(t_1,r) - \mu_j(t_2,r) dr = \sum_{i = 1}^{\tilde k} \frac{c_ic_j}{d-2-2j} \int_{t_1}^{t_2} 
	I(i,j) + II(i,j) dt,
	\end{align}
	with 
	\begin{align}
	\begin{split}\label{s532c}
	I(i,j) &= -\frac{1}{R} (u_e(t,r) r^{d-2j-1}) \big |_{r = R}^{r = 2R} + (2i - 2j - 1) \frac{1}{R} \int_R^{2R} u_e(t,r) r^{d- 2j -2} dr \\
	&\:- \frac{(2\ell - 2i + 3)(2i-2)}{R} \int_R^{2R} r^{d-2i-2j} \int_r^\infty u_e(t,\rho) \rho^{2i-3}d\rho dr, \\
	II(i,j) &= \frac{1}{R} \int_R^{2R} r^{d-2i-2j} \int_r^\infty \bigl[ -V_e(\rho) u_e(t,\rho) + N_e(\rho, u_e(t,\rho))
	\bigr ]\rho^{2i-1} d\rho
	dr.  
	\end{split}
	\end{align}
	
\end{lem}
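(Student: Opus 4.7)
The plan is to prove each estimate by leveraging the explicit formulas in Lemma \ref{l54} together with the wave equation \eqref{s52e}, treating the pointwise and averaged bounds separately.

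For the pointwise estimate \eqref{s532a}, the idea is to exploit the algebraic structure of the formula for $\lambda_j$ to produce a combination whose ``pointwise $u_e$'' contribution vanishes. Setting $c_{j,j'} := \frac{d_j(d-2j')}{d_{j'}(d-2j)}$, I would verify by direct substitution that
\begin{align*}
\lambda_j(t,r) - c_{j,j'}\, r^{2j'-2j}\lambda_{j'}(t,r) = \sum_{i=1}^{\tilde k - 1} \gamma_{i,j,j'}\, r^{d-2i-2j} \int_r^\infty u_e(t,\rho)\rho^{2i-1}\, d\rho
\end{align*}
for explicit constants $\gamma_{i,j,j'}$ depending only on $d$. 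Taking the difference between $t=t_1$ and $t=t_2$ and moving the time integration inside via the fundamental theorem of calculus gives tail integrals of $\partial_t u_e$, which by the identity
\begin{align*}
\int_r^\infty \partial_t u_e(t,\rho)\rho^{2i-1}\, d\rho = \sum_{m=1}^{k}\mu_m(t,r)\frac{r^{2i+2m-d}}{d-2i-2m}
\end{align*}
from Lemma \ref{l54} are rewritten as a linear combination of $r^{2m-2j}\int_{t_2}^{t_1}\mu_m(t,r)\,dt$. Rearranging, this produces \eqref{s532a}.

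For the averaged estimate \eqref{s532b}, I would start from the explicit formula for $\mu_j$ in Lemma \ref{l54}, take the time difference, and apply the fundamental theorem of calculus in $t$ to introduce $\int_{t_1}^{t_2}\int_r^\infty \partial_t^2 u_e(t,\rho)\rho^{2i-1}\, d\rho\, dt$. Substituting the PDE \eqref{s52e} splits this into a free--wave piece and a forcing piece. For the free--wave piece, rewriting $\partial_\rho^2 u_e + \frac{d-1}{\rho}\partial_\rho u_e = \rho^{1-d}\partial_\rho(\rho^{d-1}\partial_\rho u_e)$ and integrating by parts twice in $\rho$ produces a boundary contribution $-r^{2i-1}\partial_r u_e(t,r) - (d-2i) r^{2i-2} u_e(t,r)$ plus a residual integral $-(d-2i)(2i-2)\int_r^\infty u_e(t,\rho)\rho^{2i-3}\, d\rho$; decay of $u_e$ and $\partial_\rho u_e$ as $\rho \rar \infty$ (from the precompactness of $K$) eliminates all boundary contributions at infinity. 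Multiplying by $r^{d-2i-2j}$, averaging $\frac{1}{R}\int_R^{2R}\cdot\, dr$, and integrating by parts once more in $r$ to convert $r^{d-2j-1}\partial_r u_e$ into boundary terms at $r = R, 2R$ together with an $(d-2j-1)\int_R^{2R} u_e\, r^{d-2j-2}\, dr$ integral, this last integral combines with the $-(d-2i)r^{d-2j-2}u_e$ contribution to yield the coefficient $(d-2j-1)-(d-2i) = 2i-2j-1$ appearing in the second term of $I(i,j)$; after rewriting $d-2i = 2\ell + 3 - 2i$ in the third term, the full free--wave contribution reproduces precisely $I(i,j)$. The forcing piece $-V_e u_e + N_e$, under the same prefactor and averaging, is immediately of the form $II(i,j)$.

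The main technical obstacle is careful algebraic bookkeeping: tracking the constants $c_i, c_j, d_j$ and the telescoping coefficients through the successive integrations by parts, and verifying that the cancellations produce the stated formulas exactly rather than up to acceptable error. The compactness hypothesis on $K$ enters only through the sufficient decay of $u_e$ at spatial infinity needed to drop all boundary contributions at $\rho = \infty$; no further dynamical information about $u$ is required for this purely algebraic identity.
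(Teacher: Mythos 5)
Your proposal is correct and is essentially the argument behind the lemma: the paper itself gives no proof, citing Lemmas 5.10 and 5.12 of \cite{klls2}, and the cited proof runs exactly along your lines — cancel the pointwise $u_e r^{d-2j}$ term in the $\lam_j$ formula of Lemma \ref{l54} against a multiple of $r^{2j'-2j}\lam_{j'}$, rewrite the resulting tail integrals of $\p_t u_e$ via the $\mu_m$ identity, and for \eqref{s532b} differentiate the $\mu_j$ formula in time, substitute \eqref{s52e}, and integrate by parts in $\rho$ and then in $r$ over $[R,2R]$ to produce $I(i,j)$ and $II(i,j)$. Two cosmetic remarks: the vanishing of the boundary term $\rho^{2i-1}\p_\rho u_e$ at $\rho=\infty$ is not a pointwise decay statement (only $\p_\rho u_e \in L^2(\rho^{d-1}d\rho)$ is available), so it should be taken along a suitable sequence $\rho_n \to \infty$ or in an averaged sense, a standard fix; and your bookkeeping actually yields the identity with denominator $d-2i-2j$ and the time orientation matching $\mu_j(t_1,r)-\mu_j(t_2,r)=\int_{t_2}^{t_1}\p_t\mu_j\,dt$, which is how the lemma is used later (e.g.\ in Lemma \ref{l518}), the displayed $d-2-2j$ and limits in \eqref{s532b} being typos in the statement.
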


Subtleties arise depending on when $d = 7, 11, 15,\ldots$ ($\ell$ is even) and when 
$d = 5, 9, 13, \ldots$ ($\ell$ is odd) which are due to the relationship between $\tilde k$ and $k$.  We first prove Proposition 
\ref{p58} in the case that \textbf{$\ell$ is even}.  When $\ell$ is even, we have the relations
\begin{align*}
d = 4 \tilde k - 1, \quad \tilde k = k+1.
\end{align*}

We now establish a growth estimate which improves \eqref{s527}. 

\begin{lem}\label{l511}
	Let $\epsilon > 0$ be fixed and sufficiently small.  Then as long as $\delta_1$ as in Lemma 
	\ref{l59} is sufficiently small, we have uniformly in $t$, 
	\begin{align}
	\begin{split}\label{s533}
	|\lam_{\tilde k}(t,r)| &\lesssim r^{\e}, \\
	|\mu_k(t,r)| &\lesssim r^{\e}, \\
	|\lam_i(t,r)| &\lesssim r^{2\tilde k - 2j - 2 + 3\e}, \quad \forall 1 \leq i < \tilde k, \\
	|\mu_i(t,r)| &\lesssim r^{2\tilde k - 2j - 3 + 3\e}, \quad \forall 1 \leq i < k. 
	\end{split}
	\end{align}
\end{lem}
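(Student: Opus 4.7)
The plan is to run a bootstrap iteration on dyadic annuli, using the sharp difference estimates from Lemma \ref{l59} as the engine. Throughout, I will exploit the arithmetic identities $d = 4\tilde k - 1$ and $\tilde k = k+1$ (valid when $\ell$ is even) together with the smallness of $\delta_1$ to absorb constants under iteration. The starting input is the crude polynomial bounds \eqref{s527}, and the order of improvement is top-down: first $\lam_{\tilde k}$ and $\mu_k$, then $\lam_i$ for $i<\tilde k$, and finally $\mu_j$ for $j<k$.

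\textbf{Top coefficients.} I would first prove the stronger statement $|\lam_{\tilde k}(t,r)| + |\mu_k(t,r)| \lesssim 1$, uniformly in $t$ and in $r \geq R_1$. Telescoping gives
\[
\lam_{\tilde k}(t,r) = \lam_{\tilde k}(t,R_1) + \sum_{n \geq 0}\bigl[\lam_{\tilde k}(t, 2^{n+1} R_1) - \lam_{\tilde k}(t, 2^n R_1)\bigr],
\]
and I would bound each dyadic increment by \eqref{s528} with $j = \tilde k$ (evaluated at the inner radius $2^n R_1$), then substitute the crude bounds \eqref{s527} for all $\lam_i,\mu_j$ appearing on the right. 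A direct count using $d = 4\tilde k - 1$ shows that every term produced—linear, quadratic, cubic in $\lam_i$, as well as the $\mu_j$ contributions—has $r$-exponent strictly less than zero (for example, $r^{-2\tilde k + 1} r^{2i-3}|\lam_i|$ becomes $r^{-3/2}$, the cubic and $\mu$-terms work out to $r^{-1/2}$ or better). The resulting geometric series converges and yields a uniform bound, hence in particular $|\lam_{\tilde k}(t,r)| \lesssim r^\e$. The argument for $\mu_k$ is identical using \eqref{s529} with $j = k$.

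\textbf{Lower $\lam_i$ and $\mu_j$.} For $1 \leq i < \tilde k$ I would repeat the telescoping for $\lam_i$, now feeding in the newly established bounds $|\lam_{\tilde k}|,|\mu_k| \lesssim 1$ in place of the crude estimates for those indices. The leading right-hand-side contribution becomes the linear $\lam_{\tilde k}$-term $r^{-2i+1}\cdot r^{2\tilde k - 3}$; its dyadic sum has positive exponent and evaluates to $\lesssim r^{2\tilde k - 2i - 2}$, while all other terms either yield strictly smaller exponents or summable (negative-exponent) series. A second pass of the bootstrap, using this sharper $\lam_i$-bound, kills any residual loss from the quadratic and cubic nonlinearities. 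Iterating finitely many times stabilizes the exponent at the target $2\tilde k - 2i - 2 + 3\e$. The bounds on $\mu_i$ for $1 \leq i < k$ follow by the same mechanism applied to \eqref{s529}, using as input the freshly improved $\lam_i$ and $\mu_k$ bounds.

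The main obstacle is bookkeeping, not ideas: one must verify that at every stage, for every index combination, and for each of the six families of terms on the right of \eqref{s528}--\eqref{s529}, the exponent produced after substituting the current best bound is either (a) strictly negative (so the dyadic sum converges to a constant), or (b) positive but no larger than the target exponent (so the sum is dominated by its last term, which is of the claimed size). The identity $d = 4\tilde k - 1$ is what makes these borderline arithmetic estimates balance correctly; the $3\e$ cushion in the claim provides room for the small residual losses produced by alternating between the $\lam$ and $\mu$ bounds; and the smallness of $\delta_1$, inherited from \eqref{s527}, controls the accumulation of constants through the finitely many iterations required.
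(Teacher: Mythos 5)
Your scheme breaks at the very first step, and the error is arithmetic, not bookkeeping. The telescoping-with-crude-bounds argument does work for $\lam_{\tilde k}$: in \eqref{s528} with $j=\tilde k$ the prefactor is $r^{-2\tilde k+1}$, and substituting \eqref{s527} every term has exponent $\le -1/2$ (the cubic terms are exactly $r^{-1/2}$), so the dyadic increments sum and you even get $|\lam_{\tilde k}|\lesssim 1$. But the argument for $\mu_k$ is \emph{not} identical: in \eqref{s529} with $j=k$ the prefactor is $r^{-2k}=r^{-2\tilde k+2}$, one full power worse, and the cubic terms then have exponent exactly $+1/2$. Concretely, $r^{-2k}\,r^{6i-d-4}|\lam_i|^3\lesssim \de_1^3\, r^{-2k+\frac d2-1}=\de_1^3\,r^{1/2}$ using $|\lam_i|\lesssim\de_1 r^{\frac{d+2}{2}-2i}$ and $d=4\tilde k-1=4k+3$, and likewise $r^{-2k}r^{6i-d-1}|\mu_i|^3\lesssim\de_1^3 r^{1/2}$. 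So the increments of $\mu_k$ over dyadic scales grow, the telescoped sum is $\sim \de_1 r^{1/2}$, and you do not obtain $|\mu_k|\lesssim 1$ (nor $r^\e$) at this stage. Since your second step explicitly feeds $|\mu_k|\lesssim 1$ into the recursion for the lower $\lam_i$, the chain as written does not close. A secondary error of the same kind occurs in the second step: with the remaining indices still at the crude bounds, the linear terms $r^{2i-2j-2}|\lam_i|$ and $r^{2i-2j-1}|\mu_i|$ contribute $r^{2\tilde k-2j-3/2}$ and the cubic terms $r^{2\tilde k-2j-1/2}$, both strictly larger than the $\lam_{\tilde k}$-term $r^{2\tilde k-2j-2}$ you claim is leading; the losses per pass are fixed powers of $r$, not small $\e$'s, so the assertion that ``a second pass kills residual losses'' and ``finitely many iterations stabilize at the target'' is precisely the unproved content.

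This is also where your route diverges from the paper's, and why the paper's device matters. Instead of improving coefficients one at a time top-down, the paper improves them \emph{all simultaneously}: it forms the weighted sum $a_n=\sum_i (2^n r_0)^{2i-2\tilde k}|\lam_i|+\sum_i (2^n r_0)^{2i-2\tilde k+1}|\mu_i|$, uses Corollary \ref{c510} to get $a_{n+1}\le (1+C(k+\tilde k)\de_1)a_n$, and chooses $\de_1$ so that $(1+C(k+\tilde k)\de_1)<2^\e$; this is the actual role of the smallness of $\de_1$ (not merely ``controlling constants'') and it is what produces the $r^\e$. Only after this first simultaneous gain (\eqref{s536a}) does the paper run the per-coefficient recursion, in which the inhomogeneous term is already at the target exponent and the homogeneous factor $(1+C\de_1)$ costs at most another $r^\e$ (Corollary \ref{cor511}). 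Your sequential plan could conceivably be repaired by tracking, pass by pass, the coupled exponents for all $\lam_i,\mu_j$ at once and verifying they descend to the targets, but that verification is exactly the missing step, and as presented the proof contains false claims at the two places where the coupling bites.
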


\begin{proof}
	If $r > R_1$, by Corollary 
	\ref{c510} we have, 
	\begin{align}
	|\lam_j(t,2r)| &\leq  (1 + C \de_1) |\lam_j(t,r)| + 
	C \de_1 \left ( \sum_{i = 1}^{\tilde k} r^{2i-2j} |\lam_i(t,r)| +
	\sum_{i = 1}^k r^{2i-2j+1} |\mu_i(t,r)| \right ), \label{s534} \\
	|\mu_j(t,2r)| &\leq  (1 + C \de_1) |\mu_j(t,r)| + 
	r^{-1} C \de_1 \left ( \sum_{i = 1}^{\tilde k} r^{2i-2j} |\lam_i(t,r)| +
	\sum_{i = 1}^k r^{2i-2j+1} |\mu_i(t,r)| \right ). \label{s535} 
	\end{align}
	Fix $r_0 > R_1$ and define 
	\begin{align*}
	a_n :=   \sum_{i = 1}^{\tilde k} (2^n r_0)^{2i-2\tilde k} |\lam_i(t,2^n r_0)| +
	\sum_{i = 1}^k (2^n r_0)^{2i-2\tilde k+1} |\mu_i(t,2^n r_0)| 
	\end{align*}
	then \eqref{s534} and \eqref{s535} imply
	\begin{align*}
	a_{n+1} \leq ( 1 + C(k + \tilde k) \de_1 ) a_n.
	\end{align*} 
	By induction
	\begin{align*}
	a_n \leq ( 1 + C(k + \tilde k) \de_1 )^n a_0 
	\end{align*}
	Choose $\de_1$ so small so that $1 + C(k + \tilde k) \de_1 < 2^{\e}$.  We conclude (using the compactness of 
	$\vec u_e$) that 
	\begin{align*}
	a_n \leq 2^{n \e} a_0 \lesssim 2^{n \e},
	\end{align*}
	whence by our definition of $a_n$
	\begin{align}\label{s536a}
	|\lam_i(t,2^n r_0)| \lesssim (2^n r_0)^{2\tilde k - 2i + \e}, \quad |\mu_i(t,2^n r_0)| \lesssim (2^n r_0)^{2\tilde k - 2i -1 + \e},
	\end{align}
	which is an improvement of \eqref{s527}. 
	
	We now insert \eqref{s536a} back into our difference estimates \eqref{s528} and \eqref{s529}.  We first note that by \eqref{s536a} and 
	the relation $d = 4 \tilde k - 1 \geq 7$, we have the 
	estimates 
	\begin{align}
	\begin{split}\label{537}
	(2^n r_0)^{2i - 3} |\lam_i(t,2^n r_0)| &\lesssim (2^n r_0)^{2\tilde k - 3 + \e}, \\
	(2^n r_0)^{4i - d - 2} |\lam_i(t,2^n r_0)|^2 &\lesssim (2^n r_0)^{-1 + 2\e}
	\lesssim (2^n r_0)^{2\tilde k - 3 + 3\e}, \\
	(2^n r_0)^{6i - d - 4} |\lam_i(t,2^n r_0)|^3 &\lesssim (2^n r_0)^{2\tilde k - 3 + 3\e},
	\end{split}
	\end{align}
	as well as 
	\begin{align} 
	\begin{split}\label{538}
	(2^n r_0)^{2i - 2} |\mu_i(t,2^n r_0)| &\lesssim (2^n r_0)^{2\tilde k - 3 + \e}, \\
	(2^n r_0)^{4i - d} |\mu_i(t,2^n r_0)|^2 &\lesssim (2^n r_0)^{-1 + 2\e}
	\lesssim (2^n r_0)^{2\tilde k - 3 + 3\e}, \\
	(2^n r_0)^{6i - d} |\mu_i(t,2^n r_0)|^3 &\lesssim (2^n r_0)^{2\tilde k - 3 + 3\e}.
	\end{split}
	\end{align}
	Thus, by \eqref{s528} and \eqref{s529}, we deduce that 
	\begin{align}\label{s540}
	|\lam_j(t,2^{n+1} r_0) - \lam_j(t,2^n r_0)| \leq C \de_1 |\lam_j(2^n r_0)| + C (2^n r_0)^{2 \tilde k - 2j - 2 + 3 \e}, \\
	|\mu_j(t,2^{n+1} r_0)- \mu_j(t,2^n r_0)| \leq C \de_1 |\mu_j(2^n r_0)| + C (2^n r_0)^{2 \tilde k - 2j - 3 + 3 \e}.
	\end{align}
	From this we obtain 
	\begin{align*}
	|\lam_j(t,2^{n+1} r_0)| \leq (1+ C \de_1) |\lam_j(2^n r_0)| + C (2^n r_0)^{2 \tilde k - 2j - 2 + 3 \e}. 
	\end{align*}
	Using that we have chosen $\de_1$ so that $(1 + C \de_1) < 2^{\e}$ and iterating we obtain 
	\begin{align*}
	|\lam_j(t,2^n r_0)| \leq (2^\e)^n |\lam_j(t, r_0)| + C \sum_{m = 1}^n 
	(2^m r_0)^{2\tilde k - 2j - 2 + 3 \e} (2^\e)^{n-m}. 
	\end{align*}
	In the case $j = \tilde k$, the previous estimate is easily seen to be $O\left (2^{\e n} \right )$ since the first term dominates, while if $j < \tilde k$, we have the 
	previous estimate is $O\left ((2^n r_0)^{2\tilde k - 2j -2 + 3 \e} \right )$ since then the second term dominates.  A similar 
	argument applies to the $\mu_j$'s, and we conclude that 
	\begin{align}
	\begin{split}\label{s539}
	|\lam_{\tilde k}(t, 2^n r_0)| &\lesssim (2^n r_0)^{\e}, \\
	|\mu_{k}(t, 2^n r_0)| &\lesssim (2^n r_0)^{\e}, \\
	|\lam_{i}(t, 2^n r_0)| &\lesssim (2^n r_0)^{2\tilde k - 2i - 2 + 3\e}, \quad \forall 1 \leq i < \tilde k, \\
	|\mu_{i}(t, 2^n r_0)| &\lesssim (2^n r_0)^{2\tilde k - 2i -3 +  3\e}, \quad \forall 1 \leq i < k.
	\end{split}
	\end{align}
	The estimate \eqref{s539} is uniform in time and an improvement of \eqref{s536a}.  Let $r \geq r_0$ with $2^n r_0 \leq r \leq 2^{n+1}r_0$.  We plug \eqref{s539} into the difference estimate \eqref{s528} and obtain 
	\begin{align*}
	|\lam_{\tilde k}(t,r)| \leq (1 + C \de_1) |\lam_{\tilde k}(t,2^n r_0)| + C (2^n r_0)^{2 \tilde k - 2j - 2 + 3 \e}
	\lesssim (2^n r_0)^{\e} \lesssim r^{\e}. 
	\end{align*}
	The other estimates in \eqref{s533} are obtained by similar reasoning.  This concludes the proof. 
\end{proof}

The following corollary is a consequence of the proof of Lemma \ref{l511}.  

\begin{cor}\label{cor511}
	Let $\e$ and $\de_1$ be as in Lemma \ref{l511},  let $r_0 > R_1$ be fixed, and let $j \in \{ 1, \ldots, \tilde k\}$.  If there exists $a \geq \e$ such that for all $n \in \N$, 
	\begin{align*}
	|\lam_j(t,2^{n+1} r_0)| \leq (1 + C \de_1) |\lam_j(t,2^n r_0)| + (2^n r_0)^a, 
	\end{align*}
	then for all $r \geq r_0$
	\begin{align*}
	|\lam_j(t,r)| \lesssim r^{a}, 
	\end{align*}
	uniformly in time.  A similar statement holds for the $\mu_j$'s as well. 
\end{cor}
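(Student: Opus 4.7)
The plan is to iterate the hypothesized one-step growth estimate and then use the difference estimate from Corollary \ref{c510} to pass from dyadic to arbitrary radii. Iterating the assumed bound $n$ times yields
\begin{align*}
|\lam_j(t, 2^n r_0)| \leq (1 + C\de_1)^n |\lam_j(t,r_0)| + \sum_{m=0}^{n-1} (1 + C\de_1)^{n-1-m} (2^m r_0)^a.
\end{align*}
Recall from the proof of Lemma \ref{l511} that $\de_1$ has been chosen small enough that $1 + C\de_1 < 2^\e$. Since $a \geq \e$, this gives $1 + C\de_1 < 2^a$, so the first term on the right is $O(2^{na}) = O((2^n r_0)^a)$ uniformly in $t$. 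For the geometric sum I would factor out $(1 + C\de_1)^{n-1} r_0^a$ and observe that the ratio $2^a/(1 + C\de_1) > 1$, so the sum is dominated by its last term, again yielding $O((2^n r_0)^a)$. Combining these gives the dyadic bound $|\lam_j(t, 2^n r_0)| \lesssim (2^n r_0)^a$ uniformly in $t$ and $n$.

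To pass from the dyadic scales to general $r \geq r_0$, pick $n$ with $2^n r_0 \leq r \leq 2^{n+1} r_0$. By Corollary \ref{c510} and the a priori growth bounds \eqref{s533} established in Lemma \ref{l511}, the difference $|\lam_j(t,r) - \lam_j(t, 2^n r_0)|$ is controlled by $C\de_1 |\lam_j(t, 2^n r_0)|$ plus contributions that are already $O((2^n r_0)^a)$ by the dyadic bound (since $a \geq \e$ and the terms appearing in \eqref{s530} are at most of comparable or smaller size once $\de_1$ is small enough). Thus $|\lam_j(t,r)| \lesssim (2^n r_0)^a \lesssim r^a$ uniformly in $t$, as claimed. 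The argument for $\mu_j$ is identical, using \eqref{s531} in place of \eqref{s530}, together with the hypothesized one-step recursion for $\mu_j$.

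The main obstacle is purely bookkeeping: the only nontrivial point is verifying that $a \geq \e$ together with the choice $1 + C\de_1 < 2^\e$ forces the geometric sum to be dominated by its last term, so that no logarithmic losses appear. This is exactly the mechanism already used at the end of the proof of Lemma \ref{l511}, so the corollary is really an abstraction of that argument and requires no new technical input.
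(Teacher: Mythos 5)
Your dyadic iteration is exactly the paper's mechanism: since $1+C\de_1 < 2^{\e} \leq 2^{a}$, the homogeneous part contributes $O(2^{na})$ (to make this uniform in $t$ you should note, as the paper does, that $\sup_t|\lam_j(t,r_0)|<\infty$ at the fixed radius $r_0$ by compactness, e.g.\ via \eqref{s527}), and the inhomogeneous sum is geometric with ratio $2^{a}/(1+C\de_1)>1$ bounded away from $1$, hence comparable to its last term. That part is correct and is the same computation as at the end of the proof of Lemma \ref{l511}.

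The gap is in your passage from the dyadic points $2^{n}r_0$ to general $r$. You bound $|\lam_j(t,r)-\lam_j(t,2^{n}r_0)|$ by Corollary \ref{c510} together with the growth bounds \eqref{s533} and assert the resulting contributions are $O((2^{n}r_0)^{a})$. They are not: the right-hand side of \eqref{s530} contains $\de_1\, r^{2\tk-2j}|\lam_{\tk}(t,r)|$, and with only \eqref{s533} available this is of size $r^{2(\tk-j)+\e}$, which for $j<\tk$ is at least $r^{2+\e}$ and dwarfs $r^{a}$ precisely in the situations where the corollary is later invoked with small exponents (e.g.\ $a=\e$, or $a=2(\tk-P-1-j)-2+3\e$ in Lemmas \ref{l520} and \ref{l525}); smallness of $\de_1$ does not remove this polynomial growth, and the dyadic bound you proved concerns only the single coefficient $\lam_j$, not the other $\lam_i,\mu_i$ entering \eqref{s530}. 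Even replacing \eqref{s530} by the sharper \eqref{s528} only yields $r^{2\tk-2j-2+3\e}$ from \eqref{s533}, which still exceeds $r^{a}$ in those applications. The intended argument handles intermediate radii differently: in every use of the corollary the hypothesized one-step inequality is itself the specialization to $r'=2r$ of a difference estimate valid for all pairs $R_1\leq r\leq r'\leq 2r$ (e.g.\ \eqref{s559}, or \eqref{s528} with the currently available bounds on all coefficients inserted), so after the dyadic iteration one applies that same one-step estimate once more from $2^{n}r_0$ to $r\in[2^{n}r_0,2^{n+1}r_0]$, exactly as at the end of the proof of Lemma \ref{l511}; equivalently, one can run your dyadic iteration from every base point $r_0'\in[r_0,2r_0)$, using \eqref{s527} to bound $\sup_t\sup_{r_0'\in[r_0,2r_0]}|\lam_j(t,r_0')|$. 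Either fix closes the argument; as written, your interpolation step does not.
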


We now use the previous lemma as the base case for an induction argument.  The main goal is to prove
the following decay estimates for the projection coefficients.  

\begin{ppn}\label{p512}
	Suppose $d = 7,11,15,\ldots$ and $\e$, $\de_1$,$r_0$ are as in Lemma \ref{l511}.  Then uniformly in time, the following estimates hold:
	\begin{align}
	\begin{split}\label{s541}
	|\lam_j(t,r)| &\lesssim r^{-2j + 3 \e}, \quad \forall 1 < j \leq \tilde k, \\
	|\lam_1(t,r)| &\lesssim r^{\e}, \\
	|\mu_j(t,r)| &\lesssim r^{-2j -1 + 3 \e}, \quad \forall 1 \leq j \leq k.
	\end{split}
	\end{align}
\end{ppn}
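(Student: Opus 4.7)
The plan is to bootstrap from Lemma~\ref{l511} using the difference estimates of Lemma~\ref{l59}, applying Corollary~\ref{cor511} when the relevant exponent lies at or above $\e$, and a direct telescoping argument (combined with the compactness tail \eqref{s56}) once the exponent has become strictly negative. I would iterate a bounded number of times, alternating these two mechanisms, until all projection coefficients attain the stated rates.

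First, the nonlinear terms on the right of \eqref{s528}--\eqref{s529} are harmless throughout. Using $d = 4\tilde k - 1$ and the initial bounds \eqref{s533}, the quadratic contributions come out at most $r^{-2j - 4 + 6\e}$ and the cubic ones at most $r^{-2j - 8 + 9\e}$, both well below any target exponent that appears. Hence only the linear couplings $r^{2(i-j) - 2}|\lam_i|$ and $r^{2(i-j) - 1}|\mu_i|$ drive the iteration: writing $|\lam_i(t,r)| \lesssim r^{b_i}$ and $|\mu_i(t,r)| \lesssim r^{c_i}$ at a given stage, Lemma~\ref{l59} yields on dyadic scales an updated exponent
\begin{align*}
a_j = \max_i \bigl( b_i + 2(i - j) - 2 \bigr) \vee \max_i \bigl( c_i + 2(i - j) - 1 \bigr),
\end{align*}
and analogously for the $\mu_j$'s.

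Starting from Lemma~\ref{l511}, the top coefficient $\lam_{\tilde k}$ has $b_{\tilde k} = \e$, so a single pass through the difference estimate gives $a_{\tilde k} = -2 + \e < 0$; telescoping the geometric series $\sum_n (2^n r_0)^{-2+\e}$ and using that $\lam_{\tilde k}(t, 2^n r_0) \to 0$ as $n \to \infty$ (via the representation in Lemma~\ref{l54} and the compactness tail \eqref{s56}) promotes it to $|\lam_{\tilde k}(t,r)| \lesssim r^{-2 + \e}$. The same manoeuvre gives $|\mu_k(t,r)| \lesssim r^{-3 + \e}$. Feeding these back in, the next-to-top coefficients improve by $2$ in their exponents, and so on; after at most $\tilde k - 1$ rounds the exponents descend to the target values $b_j = -2j + 3\e$ for $j > 1$ and $c_j = -2j - 1 + 3\e$ for $j \geq 1$. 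At $j = 1$, the exponent $a_1$ saturates at $\e$ because Corollary~\ref{cor511} requires $a \geq \e$, yielding only $|\lam_1(t,r)| \lesssim r^\e$.

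The main obstacle is coordinating the order of the bootstrap so that the gains propagate correctly through the coupled system; tracking the vector $(b_1, \ldots, b_{\tilde k}, c_1, \ldots, c_k)$ under the update map induced by Lemma~\ref{l59} and checking that it contracts to the claimed fixed point, modulo the $r^\e$ floor, is a clean way to organise the argument. One must also ensure that the multiplicative factor $(1 + C\de_1)$ in Corollary~\ref{cor511} never absorbs the gain, which is arranged by choosing $\de_1$ so that $(1 + C\de_1) < 2^\e$, exactly as in Lemma~\ref{l511}. Finally, the telescoping-to-zero step crucially uses the compactness hypothesis of Proposition~\ref{p51}: without decay of $\vec u_e$ at spatial infinity, the convergent series only controls differences rather than the coefficients themselves.
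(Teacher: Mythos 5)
There is a genuine gap, and it sits at the heart of the matter. Your iteration scheme rests on the step ``telescoping the series $\sum_n (2^n r_0)^{-2+\e}$ and using that $\lam_{\tilde k}(t,2^n r_0)\to 0$ as $n\to\infty$ (via Lemma \ref{l54} and the compactness tail \eqref{s56})''. That vanishing is not available. The compactness property \eqref{s56} controls $\|\pi_R\vec u_e(t)\|_{\h(r\geq R)}$, and through Lemma \ref{l54} this yields only $|\lam_i(t,r)|\lesssim \de_1 r^{\frac{d+2}{2}-2i}$ and $|\mu_j(t,r)|\lesssim \de_1 r^{\frac{d}{2}-2j}$, i.e.\ bounds that \emph{grow} in $r$ (for $i=\tilde k$ this is $r^{1/2}$, not $o(1)$). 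The spatial difference estimates \eqref{s528}--\eqref{s529} only show that each coefficient is Cauchy along dyadic scales, hence converges to some bounded-in-time limit (this is exactly Corollary \ref{c517}); nothing in that purely spatial argument forces the limit to be zero. Indeed your mechanism, if it worked, would apply equally to $\lam_1$ and prove $\lam_1(t,r)\to 0$, whereas the true limit $\al$ of $\lam_1$ is in general nonzero (it encodes the $r^{-\ell-1}$ coefficient of a harmonic map, cf.\ Proposition \ref{p58} and Lemma \ref{l531}); so the proposed argument proves too much and cannot be repaired by bookkeeping alone. The same objection applies to your claim $|\mu_k(t,r)|\lesssim r^{-3+\e}$.

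This is precisely why the paper does not run a purely spatial bootstrap: Proposition \ref{p512} is obtained from Proposition \ref{p513} by an induction in which, at each stage $P$, the top surviving coefficients $\lam_{\tilde k-P},\mu_{k-P}$ are first shown to have limits $\al_{\tilde k-P}(t),\beta_{k-P}(t)$ (Lemma \ref{l514}), and then the \emph{dynamics in time} are invoked to kill these limits: the identity \eqref{s532a} shows $\al_{\tilde k-P}(t)$ is constant (Lemma \ref{l517}); the time-averaged identity \eqref{s532b}--\eqref{s532c}, together with the nonvanishing constant $C_0$ (contour integration, from \cite{klls2}), shows $\al_{\tilde k-P}=0$ and that $\beta_{k-P}(t)$ is constant (Lemma \ref{l518}); and a time-averaging argument over $[0,T]$ with $R=T\to\infty$ shows $\beta_{k-P}=0$ (Lemma \ref{l519}). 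Only after this does one get the decay $\lam_{\tilde k-P}=O(r^{-2})$, $\mu_{k-P}=O(r^{-1})$ that feeds the next induction step (Lemma \ref{l520}). Your proposal contains none of this (Lemma \ref{l511a} is never used), so the exponents in \eqref{s541} cannot be reached by the scheme as described. A secondary, minor point: with the base bounds \eqref{s533}, the quadratic term with $i=\tilde k$ contributes $r^{-2j+2\e}$, not $r^{-2j-4+6\e}$, so the nonlinear terms are not uniformly ``well below'' the targets, though this by itself would only complicate, not break, the bookkeeping.
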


Proposition \ref{p512} is a consequence of the following proposition with $P = k$.

\begin{ppn}\label{p513}
	With the same hypotheses as in Proposition \ref{p512}, for $P = 0, 1, \ldots, k$ the following estimates hold uniformly in time:
	\begin{align}
	\begin{split}\label{s542}
	|\lam_j(t,r)| &\lesssim r^{2(\tilde k - P -j) - 2 + 3 \e}, \quad \forall 1 \leq j \leq \tilde k \mbox{ with } j \neq \tilde k - P, \\
	|\lam_{\tilde k - P}(t,r)| &\lesssim r^{\e},\\
	|\mu_j(t,r)| &\lesssim r^{2(k - P -j) - 1 + 3 \e}, \quad \forall 1 \leq j \leq k \mbox{ with } j \neq k - P, \\
	|\mu_{k - P}(t,r) &\lesssim r^{\e}.
	\end{split}
	\end{align}
\end{ppn}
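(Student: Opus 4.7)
The plan is to proceed by induction on $P$. The base case $P = 0$ is precisely Lemma \ref{l511}. Assume \eqref{s542} holds at some stage $P$ with $0 \leq P \leq k-1$; we establish it at stage $P+1$. The heart of the induction is to show two things simultaneously: (i) all coefficients $\lambda_j$, $\mu_j$ whose index does not coincide with the old or new problematic index gain an additional factor of $r^{-2}$, and (ii) the index at which $r^\epsilon$ is the best available bound shifts one step down, from $\tilde k - P$ to $\tilde k - P - 1$ (and from $k - P$ to $k - P - 1$).

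The first step is to feed the stage-$P$ hypothesis into the spatial difference estimates \eqref{s528} and \eqref{s529}. A careful bookkeeping of the exponents produces, for each $j$,
\[
|\lambda_j(t, 2^{n+1} r_0) - \lambda_j(t, 2^n r_0)| \leq C \delta_1 |\lambda_j(t, 2^n r_0)| + C (2^n r_0)^{a_j},
\]
where $a_j$ is the exponent of the dominant term on the right-hand side, with an analogous bound for $\mu_j$. Corollary \ref{cor511} then turns each such estimate into a pointwise bound $|\lambda_j(t,r)| \lesssim r^{a_j}$ uniformly in $t$. For every index $j$ other than the old problematic index $\tilde k - P$, a direct calculation shows that $a_j$ matches exactly the stage-$(P+1)$ exponent prescribed by \eqref{s542}. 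This simultaneously handles all genuinely decaying coefficients and carries out the transition of $\lambda_{\tilde k - P - 1}$ (resp.\ $\mu_{k - P - 1}$) from its stage-$P$ rate $r^{3\epsilon}$ down to the new problematic rate $r^\epsilon$, since in that case the dominant contribution on the right-hand side is $r^\epsilon$ coming from $\lambda_{\tilde k - P}$ itself.

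The main obstacle is improving the old problematic coefficient $\lambda_{\tilde k - P}$ from $r^\epsilon$ down to $r^{-2 + 3\epsilon}$, and likewise $\mu_{k - P}$ down to $r^{-3 + 3\epsilon}$. Here Corollary \ref{cor511} alone is insufficient: although the driving term on the right-hand side of the spatial difference estimate for $\lambda_{\tilde k - P}$ is of size $r^{-2 + \epsilon}$, the telescoping procedure amplifies the base value $|\lambda_{\tilde k - P}(t, r_0)| \lesssim r_0^\epsilon$ by a factor $(1 + C \delta_1)^n \lesssim 2^{n \epsilon}$, yielding only the useless bound $r^\epsilon + r^{-2 + \epsilon} \simeq r^\epsilon$. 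To squeeze out genuine decay one must invoke the time-averaged identities \eqref{s532a}--\eqref{s532c} of Lemma \ref{l511a}: express $\lambda_{\tilde k - P}(t_1, r) - \lambda_{\tilde k - P}(t_2, r)$ (and the averaged difference $\frac{1}{R} \int_R^{2R} \mu_{k-P}(t_1, r) - \mu_{k-P}(t_2, r)\, dr$) in terms of $\lambda_{j'}$ with $j' > \tilde k - P$ evaluated at the two times, of time integrals of $\mu_m(t,r)$ over $[t_1, t_2]$, and of the nonlinearity and potential terms $V_e, F_e, G_e$ applied to $u_e$. Sending $|t_2| \to \infty$ and using the compactness property \eqref{s56} forces $\vec u_e(t_2)$, and hence all of its projection coefficients on $\{ r \geq R + |t_2| \}$, to vanish, so this becomes an honest pointwise bound on $\lambda_{\tilde k - P}(t_1, r)$. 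Combining this with the decay rates \eqref{s57}--\eqref{s59} for $V_e$, $F_e$, $G_e$, and with the bounds on the remaining coefficients already obtained in the first step, produces exactly the required $r^{-2 + 3\epsilon}$ rate for $\lambda_{\tilde k - P}$; the identity \eqref{s532b} gives $\mu_{k - P}$ by a parallel argument.

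The hardest part, as outlined, is precisely this last step: beating the $r^\epsilon$ barrier on the old problematic coefficient demands combining spatial decay information with temporal information via Lemma \ref{l511a} and the compactness of the trajectory $K$. This is the direct analogue of the corresponding induction step in the exterior wave maps analysis \cite{klls2}, and the case $d = 4k + 1$ with $\ell$ odd is handled by essentially the same scheme, with only the index relation $\tilde k = k$ in place of $\tilde k = k + 1$ modifying the bookkeeping.
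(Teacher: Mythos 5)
Your induction skeleton (base case Lemma \ref{l511}, then an inductive step combining the spatial difference estimates with Lemma \ref{l511a} and compactness) is the right one, but two of your key claims do not hold as stated. First, it is not true that feeding the stage-$P$ bounds into \eqref{s528}--\eqref{s529} already yields the stage-$(P+1)$ exponents for the non-problematic indices. With only the stage-$P$ information, the dominant term in the sums is the \emph{linear} term coming from the problematic coefficients themselves: $r^{2(\tilde k-P)-3}|\lam_{\tilde k-P}(t,r)|\lesssim r^{2(\tilde k-P)-3+\e}$ (and similarly for $\mu_{k-P}$), so for $j\neq \tilde k-P-1$ you only obtain $|\lam_j(t,r)|\lesssim r^{2(\tilde k-P-j)-2+\e}$, which falls short of the target $r^{2(\tilde k-(P+1)-j)-2+3\e}$ by essentially a factor $r^{2}$. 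The order of operations is forced: one must first improve $\lam_{\tilde k-P}$ and $\mu_{k-P}$, and only afterwards run the difference estimates to push the remaining coefficients down (this is the content of \eqref{s557} and Lemma \ref{l520} in the paper); moreover, for indices whose target exponent is negative one also needs to know the coefficient tends to $0$ in order to convert a dyadic difference bound into a pointwise bound (Corollary \ref{c517}), which again requires the vanishing of the limit of the problematic coefficient.

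Second, the mechanism you propose for that crucial improvement is not viable. The identities \eqref{s532a}--\eqref{s532c} hold only for $(t,r)$ in the exterior region $\{r\geq R+|t|\}$, so at a fixed radius $r$ you cannot send $|t_2|\rar\infty$; the compactness property \eqref{s56} only forces smallness on regions $\{r\geq R+|t_2|\}$ that recede to spatial infinity, not at fixed $r$; and the terms $\int_{t_2}^{t_1}\mu_m(t,r)\,dt$ do not vanish as $|t_2|\rar\infty$ --- they generically grow like $|t_2-t_1|$ --- so ``sending $|t_2|\to\infty$ and using \eqref{s56}'' does not produce a pointwise bound on $\lam_{\tilde k-P}(t_1,r)$. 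The actual argument is more structured: (i) show $\lam_{\tilde k-P}(t,\cdot)$ and $\mu_{k-P}(t,\cdot)$ have bounded-in-time limits $\al_{\tilde k-P}(t)$, $\beta_{k-P}(t)$ with rates $O(r^{-2})$, $O(r^{-1})$ by summing the dyadic differences and bootstrapping (Lemma \ref{l514}); (ii) show $\al_{\tilde k-P}(t)$ is constant by letting $r\rar\infty$ at \emph{fixed} $t_1,t_2$ in \eqref{s532a}, using the inductive decay of the $\mu_m$ to control the time integrals (Lemma \ref{l517}); (iii) show $\al_{\tilde k-P}=0$ from the averaged identity \eqref{s532b}--\eqref{s532c}, whose leading contribution after $R\rar\infty$ is $C_0(t_2-t_1)\al_{\tilde k-P}=\beta_{k-P}(t_2)-\beta_{k-P}(t_1)$ with an explicit constant $C_0\neq 0$ (a nontrivial fact, verified by contour integration in \cite{klls2}); boundedness of $\beta_{k-P}$ then forces $\al_{\tilde k-P}=0$ upon dividing by $t_2-t_1$ and letting $t_2\rar\infty$ (Lemma \ref{l518}); (iv) show $\beta_{k-P}=0$ by averaging $\mu_{k-P}(t,R)$ over $t\in[0,T]$, using Lemma \ref{l54} and the fundamental theorem of calculus in time to reduce to $u_e(T)-u_e(0)$, and choosing $R=T\rar\infty$ (Lemma \ref{l519}). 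Your proposal contains neither the time-constancy step, nor the nonvanishing constant $C_0$, nor the $R=T$ averaging trick, and the single limiting argument it substitutes for them fails for the reasons above; as written the induction step therefore does not close.
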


\begin{proof}[Proof of Proposition \ref{p513}]
	As was mentioned before, we prove Proposition \ref{p513} by induction.  The base case $P = 0$ is contained in Lemma 
	\ref{l511}.  We now assume that the estimates \eqref{s542} hold for $P$ with $1 \leq P \leq k - 1$ and wish to show that 
	the estimates \eqref{s542} also hold for $P+1$.  The proof is divided into several lemmas.  The bulk of the argument is devoted to proving that  
	the coefficients $\lam_{\tilde k - P}$ and $\mu_{k - P}$ satisfy certain decay estimates.  We first show that they have spatial limits. 
	
	\begin{lem}\label{l514}
		There exist bounded functions $\al_{\tilde k - P}(t)$ and $\beta_{k - P}(t)$ such that 
		\begin{align}
		|\lam_{\tilde k - P}(t,r) - \al_{\tilde k - P}(t) | = O(r^{-2}), \label{s543} \\
		|\mu_{k - P}(t,r) - \beta_{k - P}(t) | = O(r^{-1}),  \label{s544}
		\end{align}
		where the $O(\cdot)$ terms are uniform in time. 
	\end{lem} 
	
	\begin{proof}
		Fix $r_0 > R_1$.  We insert the estimates \eqref{s542} furnished by our induction hypothesis into the difference estimate \eqref{s528}.  We first note
		that based on \eqref{s542}, we can estimate the sum excluding the coefficients $\lam_{\tilde k - P}$ and $\mu_{k - P}$: 
		\begin{align*}
		\sum_{i \neq \tilde k - P}^{\tilde k}& (2^n r_0)^{2i - 3} |\lam_i(t,2^n r_0)| + (2^n r_0)^{4i - d - 2} |\lam_i(t,2^n r_0)|^2
		+ (2^n r_0)^{6i - d - 4} |\lam_i(t,2^n r_0))|^3 \\
		&+ \sum_{i \neq k - P}^{k} (2^n r_0)^{2i - 2} |\mu_i(t,2^n r_0)|^2 + (2^n r_0)^{4i - d} |\mu_i(t,2^n r_0)|^2
		+ (2^n r_0)^{6i - d-1} |\mu_i(t,2^n r_0)|^3 \\
		&\lesssim 
		(2^n r_0)^{2(\tilde k - P - 1) - 3 + 3\e} + (2^n r_0)^{-4P - 5 + 6 \e} 
		+ (2^n r_0)^{2(\tilde k - 3P - 1) - 7 + 9 \e}. 
		\end{align*}
		In particular, we have the following estimate which will be used repeatedly, 
		\begin{align}
		\begin{split}\label{s545}
		\sum_{i \neq \tilde k - P}^{\tilde k}& (2^n r_0)^{2i - 3} |\lam_i(t,2^n r_0)| + (2^n r_0)^{4i - d - 2} |\lam_i(t,2^n r_0)|^2
		+ r^{6i - d - 4} |\lam_i(t,2^n r_0)|^3 \\
		&+ \sum_{i \neq k - P}^{k} (2^n r_0)^{2i - 2} |\mu_i(t,2^n r_0)|^2 + (2^n r_0)^{4i - d} |\mu_i(t,2^n r_0)|^2
		+ (2^n r_0)^{6i - d -1} |\mu_i(t,2^n r_0)|^3 \\
		&\lesssim (2^n r_0)^{2(\tilde k - P - 1) - 3 + 3 \e}.
		\end{split}
		\end{align}
		Using \eqref{s542} and the relation $d = 4 \tilde k - 1$, $k = \tilde k - 1$, we estimate
		\begin{align}
		\begin{split}\label{s546}
		(2^n r_0)&^{2(\tilde k - P) - 3}|\lam_{\tilde k - P}(t, 2^n r_0)| + (2^n r_0)^{4(\tilde k - P) - d - 2}
		|\lam_{\tilde k - P}(t, 2^n r_0)|^2
		+ (2^n r_0)^{6(\tilde k - P) - d - 4}|\lam_{\tilde k - P}(t, 2^n r_0)|^3 \\
		&+ (2^n r_0)^{2(k - P) - 2}|\mu_{k - P}(t, 2^n r_0)| + (2^n r_0)^{4(k - P) - d}
		|\mu_{k - P}(t, 2^n r_0)|^2
		+ (2^n r_0)^{6(k - P) - d - 1}|\mu_{k - P}(t, 2^n r_0)|^3 \\
		&\lesssim (2^n r_0)^{2(\tilde k - P) - 3 + 3 \e}.
		\end{split} 
		\end{align}
		Inserting \eqref{s545} and \eqref{s546} into our difference estimates \eqref{s528} and \eqref{s529}, we deduce for each $n \in \N$ 
		\begin{align}
		\begin{split}\label{s547}
		|\lam_{\tilde k - P}(t,2^{n+1}r_0) - \lam_{\tilde k - P}(t,2^n r_0)| \lesssim 
		(2^n r_0)^{-2 + 3 \e}, \\
		|\mu_{k - P}(t, 2^{n+1} r_0) - \mu_{k - P}(t,2^n r_0)| \lesssim (2^n r_0)^{-1 + 3 \e}.  
		\end{split}
		\end{align}
		
		From \eqref{s547}, we deduce that 
		\begin{align*}
		&\sum_{n = 0}^\infty |\lam_{\tilde k - P}(t,2^{n+1}r_0) - \lam_{\tilde k - P}(t,2^n r_0)| \lesssim \sum_{n = 0}^\infty
		2^{(-2 + 3 \e)n} \lesssim 1, \\
		&\sum_{n = 0}^\infty |\mu_{ k - P}(t,2^{n+1}r_0) - \mu_{k - P}(t,2^n r_0)| \lesssim \sum_{n = 0}^\infty 2^{(-1 + 3 \e)n}
		\lesssim 1,
		\end{align*}
		uniformly in $t$.
		In particular, for all $t \in \R$ there exist $\al_{\tilde k - P}(t), \beta_{k - P}(t) \in \R$ such that 
		\begin{align*}
		\lim_{n \rar \infty} \lam_{\tilde k - P}(t,2^n r_0) &= \al_{\tilde k - P}(t), \\
		\lim_{n \rar \infty} \mu_{k - P}(t,2^n r_0) &= \beta_{\tilde k - P}(t), 
		\end{align*}
		with the estimates 
		\begin{align}
		\begin{split}\label{s549}
		&\left |\al_{\tilde k - P}(t) - \lam_{\tilde k - P}(t, 2^n r_0)\right | \lesssim (2^n r_0)^{-2 + 3 \e}, \\
		&\left |\beta_{k - P}(t) - \mu_{k - P}(t, 2^n r_0) \right | \lesssim (2^n r_0)^{-1 + 3 \e}. 
		\end{split}
		\end{align}
		
		Since the compactness of $\vec u_e(t)$ implies $|\lam_{\tilde k - P}(t,r_0)|$ is uniformly bounded in $t$, we have
		via \eqref{s549} 
		\begin{align*}
		|\al_{\tilde k - P}(t)| &\leq |\al_{\tilde k - P}(t) - \lam_{\tilde k - P}(t,r_0) | + |\lam_{\tilde k - P}(t,r_0)|
		\lesssim 1
		\end{align*}
		uniformly in $t$.  Thus,
		\begin{align*}
		|\lam_{\tilde k - P}(t,2^n r_0) | \lesssim 1, 
		\end{align*}
		uniformly in $t$ and $n$.  Similarly, $\beta_{k - P}(t)$ and $|\mu_{k - P}(t,2^n r_0)|$ are bounded uniformly in $t$ and $n$. In conclusion, 
		we have 
		\begin{align}\label{s548}
		|\lam_{\tilde k - P}(t,2^n r_0)| + |\mu_{k - P}(t,2^n r_0)| \lesssim 1
		\end{align}
		uniformly in $t$ and $n$. 
		
		Let $r \geq r_0$ with $2^n r_0 \leq r \leq 2^{n+1} r_0$.  If we insert \eqref{s548} back into the difference estimates
		\eqref{s528} and \eqref{s529},  
		we deduce that 
		\begin{align}
		\begin{split}\label{s551}
		|\lam_{\tilde k - P}(t,r) - \lam_{\tilde k - P}(t,2^n r_0)| \lesssim (2^n r_0)^{-2} \lesssim r^{-2}, \\
		|\mu_{k - P}(t,r) - \mu_{k -P}(t,2^n r_0)| \lesssim (2^n r_0)^{-1} \lesssim r^{-1}, 
		\end{split}
		\end{align}
		which imply the following improvements of \eqref{s549}
		\begin{align}
		\begin{split}\label{s550}
		&\left |\al_{\tilde k - P}(t) - \lam_{\tilde k - P}(t, 2^n r_0)\right | \lesssim (2^n r_0)^{-2}, \\
		&\left |\beta_{k - P}(t) - \mu_{k - P}(t, 2^n r_0) \right | \lesssim (2^n r_0)^{-1}.
		\end{split}
		\end{align}
		Finally, using \eqref{s551} and \eqref{s550} we conclude that 
		\begin{align*}
		|\al_{\tilde k - P}(t) - \lam_{\tilde k - P}(t,r) |
		\lesssim |\al_{\tilde k - P}(t) - \lam_{\tilde k - P}(t,2^n r_0) | + |\lam_{\tilde k - P}(t,r) - \lam_{\tilde k - P}(t,2^n r_0)| 
		\lesssim (2^n r_0)^{-2} \lesssim r^{-2},
		\end{align*}
		and 
		\begin{align*}
		|\beta_{k - P}(t) - \mu_{k - P}(t,r)| \lesssim 
		|\beta_{k - P}(t) - \mu_{k - P}(t,2^n r_0)| + 
		|\mu_{k - P}(t,r) - \mu_{k -P}(t,2^n r_0)| \lesssim (2^n r_0)^{-1} \lesssim r^{-1}. 
		\end{align*}
		This concludes the proof. 
	\end{proof}
	
	A corollary of Lemma \ref{l514} is the following preliminary asymptotics for $u_e$.  
	
	\begin{cor}\label{c516}
		We have 
		\begin{align}\label{s552}
		r^{-2(\tilde k - P) + d} u_e (t,r) = \al_{\tilde k - P}(t) + O(r^{-2+3\e}). 
		\end{align}
		The $O(\cdot)$ term is uniform in time.  
	\end{cor}
	
	\begin{proof}
		By Lemma \ref{l54}, \eqref{s543}, and our induction hypotheses \eqref{s542}, we have
		\begin{align*}
		r^{-2(\tilde k - P) - d} u_e(t,r) &= 
		\sum_{j = 1}^{\tilde k } \lam_j(t,r) r^{2j - 2(\tilde k - P)} \\
		&= \al_{\tilde k - P}(t) + \sum_{j \neq \tilde k - P}^{\tilde k} \lam_j(t,r)  r^{2j - 2(\tilde k - P)} + O(r^{-2}) \\
		&= \al_{\tilde k - P}(t) + O(r^{-2+3\e})
		\end{align*}
		uniformly in time.  
	\end{proof}
	
	A corollary of the proof of Lemma \ref{l514} is the following. 
	
	\begin{cor}\label{c517}
		Suppose that for all $r, r'$ with $R_1 \leq r \leq r' \leq 2r$, we have 
		\begin{align*}
		|\lam_j(t,r') - \lam_j(t,r)| \lesssim r^{-a}, 
		\end{align*}
		with $a < 0$.  Then $\lam_j(t,r)$ has a limit, $\al_j(t)$, as $r \rar \infty$.  Moreover, $\al_j(t)$ is bounded in time 
		and 
		\begin{align*}
		|\lam_j(t,r) - \al_j(t) | \lesssim r^{-a}
		\end{align*}
		uniformly in time. A similar statement holds for the $\mu_j$'s.  
	\end{cor}
	
	We will now show that 
	\begin{align*}
	\alpha_{\tilde k - P}(t) \equiv 0, \quad \beta_{k - P}(t) \equiv 0. 
	\end{align*}
	We first show that $\alpha_{\tilde k - P}(t)$ is constant in time.  
	
	\begin{lem}\label{l517}
		The function $\al_{\tk - P}(t)$ is constant in time.  From now on, we will write $\al_{\tk - P}$ in place of $\al_{\tk - P}(t)$. 
	\end{lem}
	
	\begin{proof}
		Let $t_2 \neq t_1$.  By \eqref{s532a} with $j = \tk - P$ and $j' = \tk - P - 1$, \eqref{s543}, and our induction hypotheses \eqref{s542} 
		we have 
		\begin{align*}
		|\al_{\tk - P}(t_2) - \al_{\tk - P}(t_1) | &\lesssim 
		|\lam_{\tk - P}(t_2,r) - \lam_{\tk - P}(t_1,r)| + O(r^{-2}) \\
		&\lesssim r^{-2} |\lam_{\tk - P -1}(t_2,r) - \lam_{\tk - P-1}(t_1,r)| 
		+ \sum_{m =1}^k \int_{t_1}^{t_2} r^{2m - 2(\tk - P)} |\mu_m (t,r)| dt
		+ O(r^{-2}) \\
		&\lesssim r^{-2+3\e}(1 + |t_2 - t_1|).  
		\end{align*}
		We let $r \rar \infty$ and deduce that $\al_{\tk - P}(t_2) = \al_{\tk - P}(t_1)$ as desired. 
	\end{proof}
	
	We now show that $\al_{\tk - P} = 0$.  As a consequence, we will also obtain the fact that $\beta_{k - P}(t)$ is constant in time. 
	
	\begin{lem}\label{l518}
		We have $\al_{\tk - P} = 0$ and $\beta_{k - P}(t)$ is constant in time. 
		From now on, we will write $\beta_{k - P}$ in place of $\beta_{k - P}(t)$. 
	\end{lem}
	
	\begin{proof}
		The key tool for proving both assertions is Lemma \ref{l511a}.  By \eqref{s532b} and \eqref{s544} we have 
		\begin{align*}
		\beta_{k - P}(t_2) - \beta_{k - P}(t_1) &= 
		\frac{1}{R} \int_R^{2R} \beta_{k - P}(t_2) - \beta_{k - P}(t_1) dr \\
		&= \frac{1}{R} \int_R^{2R}[\mu_{k - P}(t_2,r) - \mu_{k - P}(t_1,r) ] dr + O(R^{-1}) \\
		&= \sum_{i = 1}^k \frac{c_i c_{k - P}}{d - 2i - 2(k - P)} 
		\int_{t_1}^{t_2} I(i, k - P) + II(i, k - P) dt
		\end{align*}
		where $I(i,k-P)$ and $II(i,k-P)$ are defined as in \eqref{s532c}.  The estimates for the potential $V_e$ and nonlinearity
		$N_e$, \eqref{s57}--\eqref{s59}, along with \eqref{s552} imply 
		\begin{align*}
		\Bigl | -V_e(r) u_e + N_e(r, u_e) \Bigr | &\lesssim r^{-2 \tk - 2P - 3} + r^{-4\tk - 4P - 1} + r^{-2\tk - 6P - 3} \\
		&\lesssim r^{-2\tk - 2P - 3}. 
		\end{align*}
		Hence, using that $d = 4 \tk - 1$ and $k = \tk - 1$, we have 
		\begin{align}
		|II(i, k - P)| = \Bigl | \frac{1}{R} \int_R^{2R} r^{d - 2i - 2k - 2P} \int_r^\infty [ -V_e(\rho) u_e(t,\rho)
		+ N_e(\rho,u_e(t,\rho) ] \rho^{2i-1} d\rho dr \Bigr | \lesssim R^{-2}. \label{s553}
		\end{align}
		We now estimate the remaining term, 
		\begin{align*}
		I(i,k-P) &= -\frac{1}{R} (u_e(t,r) r^{d-2(k-P)-1}) \big |_{r = R}^{r = 2R} + (2i - 2(k-P) - 1) \frac{1}{R} 
		\int_R^{2R} u_e(t,r) r^{d- 2(k-P) -2} dr \\
		&\:- \frac{(2\ell - 2i + 3)(2i-2)}{R} \int_R^{2R} r^{d-2i-2(k-P)} \int_r^\infty u_e(t,\rho) \rho^{2i-3}d\rho dr.
		\end{align*}
		By \eqref{s552}, we have 
		\begin{align*}
		r^{d- 2(k - P) - 2} u_e(t,r) &= \al_{\tk - P} + O(r^{-2 + 3 \e}), \\
		r^{d - 2(k - P) - 2i} \int_r^\infty u_e(t,\rho) \rho^{2i - 3} d\rho &= \frac{\al_{\tk - P}}{d - 2i - 2(k - P)} + O(r^{-2 + 3\e}), 
		\end{align*}
		so that 
		\begin{align*}
		I(i,k-P) = -\frac{2(k-P)(d - 2(k-P) - 2)}{d - 2i - 2(k - P)} \al_{\tilde k - P} + O(R^{-2 + 3 \e}). 
		\end{align*}
		Thus, 
		\begin{align}\label{s554}
		\sum_{i = 1}^k \frac{c_i c_{k - P}}{d - 2i - 2(k - P)} 
		\int_{t_1}^{t_2} I(i, k - P) dt = C_0(t_2 - t_1) \al_{\tk - P}  + O(R^{-2 + 3\e}(t_2 - t_1)) 
		\end{align}
		where 
		\begin{align*}
		C_0 := - \sum_{i = 1}^k \frac{2c_i c_{k - P} (k - P) (d - 2(k-P)- 2)}{(d - 2i - 2(k - P))^2} 
		\end{align*}
		It can be shown using contour integration that $C_0 \neq 0$ (see Remark 5.29 in \cite{klls2} for the explicit value for $C_0$).  We let $R \rar \infty$
		in \eqref{s554} and deduce that 
		\begin{align}\label{s555}
		C_0(t_2 - t_1) \al_{\tk - P}  = \beta_{k - P}(t_2) - \beta_{k - P}(t_1).  
		\end{align}
		Since $|\beta_{k - P}(t)| \lesssim 1$ by Lemma \ref{l514} and $C_0 \neq 0$, we obtain 
		\begin{align*}
		\al_{\tilde k - P} = \frac{1}{C_0} \lim_{t_2 \rar \infty} \frac{\beta_{k - P}(t_2) - \beta_{k - P}(t_1)}{t_2 - t_1} = 0. 
		\end{align*}
		Thus, $\al_{\tk - P} = 0$ which by \eqref{s555} implies that $\beta_{k - P}(t)$ is constant in time. 
		
	\end{proof}
	
	We now conclude that $\beta_{k - P} = 0$.  
	
	\begin{lem}\label{l519}
		We have $\beta_{k - P} = 0$. 
	\end{lem}
	
	\begin{proof}
		By Lemma \ref{l514}, $\beta_{k -P} = \mu_{k - P}(t,R) + O(R^{-1})$ uniformly in time so that 
		\begin{align*}
		\beta_{k - P} = \frac{1}{T} \int_0^T \mu_{k - P}(t,R) dt + O(R^{-1}).
		\end{align*}
		Since $\al_{\tk - P} = 0$, we have by \eqref{s552}
		\begin{align*}
		u_e(t,r) = O(r^{-d + 2(\tilde k - P) - 2 + 3\e}), 
		\end{align*}
		uniformly in time. 
		Thus, by Lemma \ref{l54} and the relations $d = 4 \tilde k - 1$, $\tilde k = k + 1$, we have 
		\begin{align*}
		\Bigl | \int_0^T \mu_{k - P}(t,R) dt \Bigr | &\lesssim 
		\sum_{i = 1}^k R^{d - 2i - 2(k - P)} \Bigl | \int_R^\infty \int_0^T \p_t u_e(t,\rho) dt \rho^{2i - 1} d\rho \Bigr | \\
		&\lesssim 
		\sum_{i = 1}^k R^{d - 2i - 2(k - P)} \int_R^\infty  |u_e(T,\rho) - u_e(0,\rho)| \rho^{2i - 1} d\rho  \\
		&\lesssim R^{3\e}.  
		\end{align*}
		It follows that 
		\begin{align*}
		\beta_{k - P} = O ( R^{3\e} / T) + O(R^{-1}). 
		\end{align*}
		We set $R = T$ and let $T \rar \infty$ to conclude that $\beta_{k - P} = 0$ as desired. 
	\end{proof}
	
	In summary, we have now shown that if \eqref{s542} holds, then 
	\begin{align}
	\begin{split}\label{s557}
	\lam_{\tk - P}(t,r) &= O(r^{-2}), \\
	\mu_{k - P}(t,r) &= O(r^{-1}),
	\end{split}
	\end{align}
	uniformly in time.  We will now insert \eqref{s557} back into the difference estimates \eqref{s528} and \eqref{s529}
	to obtain \eqref{s542} for $P + 1$. 
	
	\begin{lem}\label{l520}
		Assume \eqref{s542} is true for $0 \leq P \leq k - 1$.  Then \eqref{s542} holds for $P + 1$.  
	\end{lem}
	
	\begin{proof}
		We recall that by \eqref{s545}, we have for all $r > R_1$
		\begin{align}
		\begin{split}\label{s558}
		\sum_{i \neq \tilde k - P}^{\tilde k}& r^{2i - 3} |\lam_i(t,r)| + r^{4i - d - 2} |\lam_i(t,r)|^2
		+ r^{6i - d - 4} |\lam_i(t,r)|^3 \\
		&+ \sum_{i \neq k - P}^{k} r^{2i - 2} |\mu_i(t,r)|^2 + r^{4i - d} |\mu_i(t,r)|^2
		+ r^{6i - d -1} |\mu_i(t,r)|^3, \\
		&\lesssim r^{2(\tilde k - P - 1) - 3 + 3 \e}
		\end{split}
		\end{align}
		with the main contribution coming from the linear terms. By \eqref{s557}, we have for all $r > R_1$
		\begin{align}
		\begin{split}\label{s558b}
		r&^{2(\tilde k - P) - 3}|\lam_{\tilde k - P}(t, r)| + r^{4(\tilde k - P) - d - 2}
		|\lam_{\tilde k - P}(t, r)|^2
		+ r^{6(\tilde k - P) - d - 4}|\lam_{\tilde k - P}(t, r)|^3 \\
		&+ r^{2(k - P) - 2}|\mu_{k - P}(t, r)| + r^{4(k - P) - d}
		|\mu_{k - P}(t, r)|^2
		+ r^{6(k - P) - d - 1}|\mu_{k - P}(t, r)|^3 \\
		&\lesssim r^{2(\tilde k - P-1) - 3 + 3 \e}
		\end{split} 
		\end{align}
		with the main contribution coming from the linear terms. Thus, inserting \eqref{s558} and \eqref{s558b} into our difference estimate \eqref{s528}, we have for all $R_1 \leq r \leq r' \leq 2r$, 
		\begin{align}\label{s559}
		|\lam_j(t,r') - \lam_j(t,r) | \lesssim r^{2(\tk - (P + 1) - j) - 2 + 3\e}. 
		\end{align}
		By our induction hypotheses \eqref{s542}, if $\tilde k - P < j \leq k - 1$, we have $\lam_j(t,r) \rar 0$.  By 
		Corollary \ref{c517} we then deduce that 
		\begin{align*}
		|\lam_j(t,r) | \lesssim r^{2(\tilde k - (P +1) - j)  -2 + 3 \e}
		\end{align*}
		uniformly in time.  If $j = \tilde k - (P +1)$, by \eqref{s559} and Corollary \ref{c517} we also deduce that 
		$$|\lam_{\tilde k - (P + 1)}(t,r)| \lesssim 1 \lesssim r^{\e}$$ uniformly in time.  Finally, if $j > \tk - (P + 1)$, 
		we have $2(\tk - P - 1 - j) - 2 + 3\e > \e$ so that by \eqref{s559} and Corollary \ref{cor511} 
		\begin{align*}
		|\lam_j(t,r)| \lesssim r^{2(\tk - (P + 1) - j) - 2 + 3\e}.
		\end{align*}
		In conclusion, we have shown that 
		\begin{align*}
		|\lam_j(t,r)| &\lesssim r^{2(\tilde k - (P+1) -j) - 2 + 3 \e}, \quad \forall 1 \leq j \leq \tilde k \mbox{ with } 
		j \neq \tilde k - (P+1), \\
		|\lam_{\tilde k - (P+1)}(t,r)| &\lesssim r^{\e},
		\end{align*}
		uniformly in time. A similar argument establishes 
		\begin{align*}
		|\mu_j(t,r)| &\lesssim r^{2(k - (P+1) -j) - 1 + 3 \e}, \quad \forall 1 \leq j \leq k \mbox{ with } j \neq k - (P+1), \\
		|\mu_{k - (P+1)}(t,r)| &\lesssim r^{\e}.
		\end{align*}
		This proves Lemma \ref{l520}. 
	\end{proof} 
	
	By Lemma \ref{l520} and induction, we have proved Proposition \ref{p513}.
\end{proof}

The final step in proving Proposition \ref{p58} is to establish that $\lam_1(0,r)$ has a limit as $r \rar \infty$.  In what follows 
we denote $\lam_j(r) = \lam_j(0,r)$ and $\mu_j(r) = \mu_j(0,r)$.  

\begin{lem}\label{l521}
	There exists $\al \in \R$ such that 
	\begin{align}\label{s562}
	|\lam_1(r) - \al| = O(r^{-2}).  
	\end{align}
	Moreover, we have the slightly improved decay rates
	\begin{align}
	\begin{split}\label{s563}
	|\lam_j(r)| &\lesssim r^{-2j}, \quad 1 < j \leq \tilde k, \\
	|\mu_j(r)| &\lesssim r^{-2j - 1}, \quad 1 \leq j \leq k.
	\end{split} 
	\end{align}
\end{lem}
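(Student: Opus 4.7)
My plan is to evaluate the difference estimates \eqref{s528} and \eqref{s529} of Lemma \ref{l59} at $t = 0$ using the bounds from Proposition \ref{p512}, extract a spatial limit for $\lambda_1$ via a telescoping argument, and then bootstrap once to remove the small $\epsilon$ loss and reach the sharp $r^{-2}$ rates.

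First I would insert the Proposition \ref{p512} bounds $|\lambda_1(r)| \lesssim r^{\epsilon}$, $|\lambda_j(r)| \lesssim r^{-2j+3\epsilon}$ ($j > 1$), and $|\mu_j(r)| \lesssim r^{-2j-1+3\epsilon}$ into the right-hand sides of \eqref{s528} and \eqref{s529}. Using the relations $d = 4\tilde k - 1$ and $k = \tilde k - 1$, a direct check shows that the dominant contribution comes from the linear $\lambda_1$ term, which gives $r^{-2j+1}\cdot r^{-1}\cdot r^{\epsilon} = r^{-2j+\epsilon}$, while the remaining linear, quadratic, and cubic contributions all decay by at least an additional factor of $r^{-2 + C\epsilon}$. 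This yields, for all $R_1 \leq r \leq r' \leq 2r$,
\begin{align*}
|\lambda_j(r')-\lambda_j(r)| &\lesssim r^{-2j+\epsilon}, \\
|\mu_j(r')-\mu_j(r)| &\lesssim r^{-2j-1+\epsilon}.
\end{align*}

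Second, I would telescope the estimate for $\lambda_1$ over dyadic scales $r = R_1 2^n$. Since $\sum_n (R_1 2^n)^{-2+\epsilon}$ converges geometrically, the sequence $\{\lambda_1(R_1 2^n)\}$ is Cauchy, with limit $\alpha \in \R$ satisfying $|\lambda_1(R_1 2^n) - \alpha| \lesssim (R_1 2^n)^{-2+\epsilon}$, and then Corollary \ref{c517} extends this to $|\lambda_1(r) - \alpha| \lesssim r^{-2+\epsilon}$ for all $r \geq R_1$. In particular, $|\lambda_1(r)| \lesssim 1$ uniformly. For $j > 1$, Proposition \ref{p512} already guarantees $\lambda_j(r) \to 0$, so Corollary \ref{c517} yields $|\lambda_j(r)| \lesssim r^{-2j+\epsilon}$; the same reasoning gives $|\mu_j(r)| \lesssim r^{-2j-1+\epsilon}$ for $1 \leq j \leq k$.

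Third, I would re-run the difference estimates \eqref{s528} and \eqref{s529}, now using the improved pointwise bound $|\lambda_1(r)| \lesssim 1$ in place of $r^{\epsilon}$. The linear $\lambda_1$ contribution becomes $r^{-2j+1}\cdot r^{-1}\cdot 1 = r^{-2j}$, and the other terms remain strictly subordinate, so
\begin{align*}
|\lambda_j(r')-\lambda_j(r)| \lesssim r^{-2j}, \qquad |\mu_j(r')-\mu_j(r)| \lesssim r^{-2j-1}.
\end{align*}
Telescoping once more gives $|\lambda_1(r) - \alpha| \lesssim r^{-2}$ for $j = 1$; for $j > 1$ the limit at infinity is zero, so $|\lambda_j(r)| \lesssim r^{-2j}$, and similarly $|\mu_j(r)| \lesssim r^{-2j-1}$ for $1 \leq j \leq k$. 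These are exactly \eqref{s562} and \eqref{s563}.

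The main obstacle is bookkeeping in the first step: confirming that after substituting the Proposition \ref{p512} bounds into \eqref{s528}-\eqref{s529}, it is genuinely the linear $\lambda_1$ coupling that dominates, and that the quadratic $\lambda_1^2$, cubic $\lambda_1^3$, and all mixed $\mu$-contributions are strictly smaller. Once this dominance is in hand, the two-step bootstrap—extract a limit for $\lambda_1$, then feed back its boundedness—closes cleanly and the $\epsilon$ loss disappears.
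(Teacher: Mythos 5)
Your proposal is correct and follows essentially the same route as the paper: insert the $\epsilon$--loss bounds of Proposition \ref{p512} into the difference estimates \eqref{s528}--\eqref{s529}, observe that the linear $\lambda_1$ contribution dominates, use Corollary \ref{c517} to extract the limit $\alpha$ and the uniform bound $|\lambda_1(r)| \lesssim 1$, and then feed that boundedness back in to remove the $\epsilon$ and obtain \eqref{s562}--\eqref{s563}. The only cosmetic difference is that you spell out the dyadic telescoping and record intermediate $r^{-2j+\epsilon}$ bounds for $j>1$, which the paper subsumes into Corollary \ref{c517} and a single final pass.
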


\begin{proof}
	By \eqref{s541},
	\begin{align} 
	\begin{split}\label{s564}
	\sum_{i = 2}^{\tilde k}& r^{2i - 3} |\lam_i(t,r)| + r^{4i - d - 2} |\lam_i(t,r)|^2
	+ r^{6i - d - 4} |\lam_i(t,r)|^3 \\
	&+ \sum_{i = 1}^{k} r^{2i - 2} |\mu_i(t,r)|^2 + r^{4i - d} |\mu_i(t,r)|^2
	+ r^{6i - d -1} |\mu_i(t,r)|^3, \\
	&\lesssim r^{-3 + 3\e}
	\end{split}
	\end{align}
	with the main contribution coming from the linear terms, and 
	\begin{align}
	\begin{split}\label{s565}
	r&^{2 - 3}|\lam_{1}(t, r)| + r^{4 - d - 2}
	|\lam_{1}(t, r)|^2
	+ r^{6 - d - 4}|\lam_{1}(t, r)|^3 
	\lesssim r^{-1 + \e}.
	\end{split} 
	\end{align}
	We insert \eqref{s564} and \eqref{s565} into our difference estimate \eqref{s528} and conclude that for all $r,r'$ 
	with $R_1 < r \leq r' \leq 2r$
	\begin{align*}
	|\lam_1(r') - \lam_1(r) | \lesssim r^{-2 + \e}.  
	\end{align*}
	By Corollary \ref{c517}, we deduce that there exists $\alpha \in \R$ such that 
	\begin{align*}
	|\lam_1(r) - \al | \lesssim r^{-2+\e}. 
	\end{align*}
	In particular, $|\lam_1(r)| \lesssim 1$.  This improves the estimate \eqref{s565} to 
	\begin{align}
	\begin{split}\label{s566}
	r&^{2 - 3}|\lam_{1}(t, r)| + r^{4 - d - 2}
	|\lam_{1}(t, r)|^2
	+ r^{6 - d - 4}|\lam_{1}(t, r)|^3 
	\lesssim r^{-1}.
	\end{split} 
	\end{align}
	We plug \eqref{s564} and \eqref{s566} back into our difference estimate \eqref{s528} and conclude that for all $r,r'$ 
	with $R_1 < r \leq r' \leq 2r$
	\begin{align*}
	|\lam_1(r') - \lam_1(r) | \lesssim r^{-2}.  
	\end{align*}
	Thus, 
	\begin{align*}
	|\lam_1(r) - \al| \lesssim r^{-2}. 
	\end{align*}
	By \eqref{s564} and \eqref{s566} and the difference estimate \eqref{s528} we conclude that for the other coefficients, 
	for all $r,r'$ with $R_1 < r \leq r' \leq 2r$
	\begin{align}
	|\lam_j(r') - \lam_j(r) | &\lesssim r^{-2j}, \\
	|\mu_j(r') - \mu_j(r) | &\lesssim r^{-2j-1}. 
	\end{align}
	By \eqref{s541}, these coefficients go to 0 as $r \rar \infty$ so that by Corollary \ref{c517} we conclude that 
	\begin{align*}
	|\lam_j(r) | &\lesssim r^{-2j}, \\
	|\mu_j(r) | &\lesssim r^{-2j-1}.
	\end{align*}
	This completes the proof. 
\end{proof}

\begin{proof}[Proof of Proposition \ref{p58}]
	By \eqref{s562}, \eqref{s563} and Lemma \ref{l54}
	\begin{align*}
	r^{d-2} u_e(0,r) &= \sum_{j = 1}^{\tk} \lam_j(r) r^{2j - 2} \\
	&= \lam_1(r) + \sum_{j = 2}^{\tk} \lam_j(r) r^{2j - 2} \\
	&= \al + O(r^{-2})
	\end{align*}
	as well as
	\begin{align*}
	\int_r^{\infty} \p_t u_e(0, \rho) \rho^{2i -1} d \rho &= \sum_{j = 1}^k \mu_j(r) \frac{r^{2i + 2j - d}}{d - 2i - 2j} \\
	&= O(r^{2i - d - 1})
	\end{align*}
	as desired.
\end{proof}

We now establish Proposition \ref{p58} in the case that $d = 5, 9, 13,\ldots,$ i.e. when $d = 2\ell + 3$ with 
$\ell$ \textbf{odd}.  The case $\ell = 1$, $d = 5$, is contained in \cite{cpr}.  When $\ell$ is odd, we have the identities
\begin{align*}
k = \tilde k = \frac{\ell + 1}{2}, \quad d = 4k + 1. 
\end{align*}
The proof of Proposition \ref{p58} for when $\ell$ is odd is very similar to the case when $\ell$ is even but contains
subtleties because of the above identities.  In particular, there is an extra $\mu$ coefficient, 
$\mu_k$, which must be dealt with before we can proceed to showing $\lam_j, \mu_{j-1}$ tend to 0 by induction. 

We first establish an $\e$--growth estimate for the coefficients. 
\begin{lem}\label{l522}
	Let $\epsilon > 0$ be fixed and sufficiently small.  Then as long as $\delta_1$ as in Lemma 
	\ref{l59} is sufficiently small, we have uniformly in $t$, 
	\begin{align}
	\begin{split}\label{s567a}
	|\lam_{k}(t,r)| &\lesssim r^{\e}, \\
	|\mu_k(t,r)| &\lesssim r^{\e}, \\
	|\lam_i(t,r)| &\lesssim r^{2 k - 2j - 1 + 3\e}, \quad \forall 1 \leq i < k, \\
	|\mu_i(t,r)| &\lesssim r^{2 k - 2j - 2 + 3\e}, \quad \forall 1 \leq i < k.
	\end{split}
	\end{align}
\end{lem}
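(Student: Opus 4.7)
The plan is to adapt the argument of Lemma \ref{l511} to the odd $\ell$ regime, where the identities $\tk = k$ and $d = 4k+1$ force both families of projection coefficients to peak at the same index $k$. First, fix $r_0 > R_1$ and define the weighted sum
\[
a_n := \sum_{i=1}^{k} (2^n r_0)^{2i - 2k - 1}|\lam_i(t, 2^n r_0)| + \sum_{i=1}^{k} (2^n r_0)^{2i - 2k}|\mu_i(t, 2^n r_0)|.
\]
The key calibration is that the doubling multipliers $2^{2j - 2k - 1} \leq 1/2$ and $2^{2j - 2k} \leq 1$ for every admissible $j \leq k$, and that each sum appearing on the right-hand side of Corollary \ref{c510}, once multiplied by the weight on $\lam_j$ or $\mu_j$, reexpresses itself inside $a_n$. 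A direct application of Corollary \ref{c510} then yields $a_{n+1} \leq (1 + C(2k+1)\de_1) a_n$. Choosing $\de_1$ small enough that $1 + C(2k+1)\de_1 < 2^{\e}$ and iterating gives $a_n \lesssim 2^{n\e}$, which unpacks to the preliminary bounds
\[
|\lam_i(t, 2^n r_0)| \lesssim (2^n r_0)^{2k + 1 - 2i + \e}, \qquad |\mu_i(t, 2^n r_0)| \lesssim (2^n r_0)^{2k - 2i + \e}.
\]

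These preliminary estimates already improve the natural bounds from Lemma \ref{l54} by $r^{1/2 - \e}$ and serve as input for a refinement step. Substituting them into the difference estimates \eqref{s528} and \eqref{s529} with $d = 4k+1$, a short computation shows that all six terms on the right-hand side of \eqref{s528} are at most $r^{2k - 2 + 3\e}$, with the linear pieces dominating. This produces
\[
|\lam_j(t, 2^{n+1}r_0) - \lam_j(t, 2^n r_0)| \leq C\de_1 |\lam_j(t, 2^n r_0)| + C(2^n r_0)^{2k - 2j - 1 + 3\e},
\]
and the analogous bound with exponent $2k - 2j - 2 + 3\e$ for $\mu_j$. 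Iterating these geometric recursions exactly as in the passage from \eqref{s540} to \eqref{s539}, and splitting according to whether the exponential factor $2^{n\e}$ or the polynomial source term dominates, yields $r^{\e}$ control at the top index $j = k$ for both $\lam_k$ and $\mu_k$ (where the polynomial exponents $-1 + 3\e$ and $-2 + 3\e$ are negative), and the advertised improved decay at each $j < k$. A single further application of \eqref{s528}--\eqref{s529} interpolates from the dyadic radii $2^n r_0$ to arbitrary $r \geq r_0$, yielding the uniform-in-time bounds \eqref{s567a}.

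The main obstacle is the calibration of the weights in $a_n$. The even-case normalization, where $\lam_i$ carries weight $r^{2i - 2\tk}$ and $\mu_i$ carries weight $r^{2i - 2\tk + 1}$, places the top $\mu$ weight at $r^{-1}$ because there $k = \tk - 1$, keeping every doubling multiplier strictly below one. In the odd case both families peak at the same index $k$, and the naive analogue would force the $\mu_k$ doubling multiplier to equal $2$, killing the geometric contraction. The shifted integer weights $r^{2i - 2k - 1}$ for $\lam_i$ and $r^{2i - 2k}$ for $\mu_i$ above reequalize the natural top scales of $\lam_k$ and $\mu_k$ and restore the contraction; this is the analogue, at the level of the $a_n$-iteration, of the ``extra $\mu_k$ coefficient'' phenomenon flagged before the statement. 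Once this calibration is fixed, the subsequent bookkeeping of exponents proceeds in parallel with Lemma \ref{l511}.
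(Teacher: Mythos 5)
Your proposal is correct and follows essentially the same route as the paper's proof: the same weighted dyadic quantity (your $a_n$ is the paper's $b_n$, with the same shifted weights $r^{2i-2k-1}$, $r^{2i-2k}$), the same contraction via Corollary \ref{c510} giving the preliminary bounds $|\lam_i|\lesssim r^{2k-2i+1+\e}$, $|\mu_i|\lesssim r^{2k-2i+\e}$, and the same bootstrap through the difference estimates \eqref{s528}--\eqref{s529} followed by the dyadic iteration (Corollary \ref{cor511}) to pass to all $r\geq r_0$. One cosmetic slip: with these preliminary bounds the cubic terms, not the linear ones, carry the largest power ($r^{2k-2+3\e}$ versus $r^{2k-2+\e}$), but since you bound all six terms by $r^{2k-2+3\e}$ this has no effect on the argument.
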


\begin{proof}
	Let $r > R_1$. By Corollary 
	\ref{c510} we have, 
	\begin{align}
	\begin{split}\label{s567}
	|\lam_j(t,2r)| &\leq  (1 + C \de_1) |\lam_j(t,r)| + 
	C \de_1 \left ( \sum_{i = 1}^{k} r^{2i-2j} |\lam_i(t,r)| +
	\sum_{i = 1}^k r^{2i-2j+1} |\mu_i(t,r)| \right ), \\
	|\mu_j(t,2r)| &\leq  (1 + C \de_1) |\mu_j(t,r)| + 
	r^{-1} C \de_1 \left ( \sum_{i = 1}^{k} r^{2i-2j} |\lam_i(t,r)| +
	\sum_{i = 1}^k r^{2i-2j+1} |\mu_i(t,r)| \right ).
	\end{split} 
	\end{align}
	Fix $r_0 > R_1$ and define 
	\begin{align*}
	b_n :=   \sum_{i = 1}^{k} (2^n r_0)^{2i-2k-1} |\lam_i(t,2^n r_0)| +
	\sum_{i = 1}^k (2^n r_0)^{2i-2k} |\mu_i(t,2^n r_0)|. 
	\end{align*}
	Then by \eqref{s567}
	\begin{align*}
	b_{n+1} \leq ( 1 + 2Ck \de_1 ) b_n.
	\end{align*} 
	By iterating we obtain 
	\begin{align*}
	b_n \leq ( 1 + 2Ck \de_1 )^n b_0 
	\end{align*}
	Choose $\de_1$ so small so that $1 + 2Ck \de_1 < 2^{\e}$.  By the compactness of $\vec u_e(t)$, 
	$b_0 \lesssim 1$ uniformly in $t$, and we conclude that 
	\begin{align*}
	b_n \leq 2^{n \e} b_0 \lesssim 2^{n \e}.
	\end{align*}
	By our definition of $b_n$ it follows that 
	\begin{align}\label{s568}
	|\lam_i(t,2^n r_0)| \lesssim (2^n r_0)^{2k - 2i + 1 + \e}, \quad |\mu_i(t,2^n r_0)| \lesssim (2^n r_0)^{2k - 2i + \e},
	\end{align}
	which is an improvement of \eqref{s527}. 
	
	As in the proof of Lemma \ref{l511}, we insert \eqref{s568} back into our difference estimates \eqref{s528} and \eqref{s529}
	and conclude that  
	\begin{align}\label{s540}
	|\lam_j(t,2^{n+1} r_0) - \lam_j(t,2^n r_0)| \leq C \de_1 |\lam_j(2^n r_0)| + C (2^n r_0)^{2 k - 2j - 1 +  3\e}, \\
	|\mu_j(t,2^{n+1} r_0)- \mu_j(t,2^n r_0)| \leq C \de_1 |\mu_j(2^n r_0)| + C (2^n r_0)^{2 k - 2j - 2 + 3\e},
	\end{align}
	with the dominant contribution coming from the cubic terms. By Corollary \ref{cor511} we conclude that uniformly in $t$
	\begin{align*}
	|\lam_{k}(t,r)| &\lesssim r^{\e}, \\
	|\mu_k(t,r)| &\lesssim r^{\e}, \\
	|\lam_i(t,r)| &\lesssim r^{2\tilde k - 2j - 1 + 3\e}, \quad \forall 1 \leq i < k, \\
	|\mu_i(t,r)| &\lesssim r^{2\tilde k - 2j - 2 + 3\e}, \quad \forall 1 \leq i < k, 
	\end{align*} 
	as desired. 
\end{proof}

We now turn to showing that the extra term $\mu_k$ goes to 0 as $r \rar \infty$. 

\begin{lem}\label{l523}
	There exists a bounded function $\beta_k(t)$ on $\R$ such that 
	\begin{align}\label{s569}
	|\mu_k(t,r) - \beta_k(t) | \lesssim r^{-2},
	\end{align}
	uniformly in $t$. 
\end{lem}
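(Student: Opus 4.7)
The plan mirrors the proof of Lemma \ref{l514} from the even $\ell$ case, now applied to the single coefficient $\mu_k(t,r)$. The structural change in the odd regime is that the relation $\tk = k$ produces the coefficient $\lam_k$ at the same $r^\e$ growth rate as $\mu_k$, so the argument requires verifying that the dominant terms on the right-hand side of the difference estimate \eqref{s529} still come from $\mu_k$ itself.

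First I would fix $r_0 > R_1$ and specialize \eqref{s529} with $j = k$ on the dyadic interval $[2^n r_0, 2^{n+1} r_0]$. Inserting the growth bounds \eqref{s567a} from Lemma \ref{l522} and using $d = 4k+1$, the extremal contribution comes from the cubic piece $r^{-2k} \cdot r^{6k-d-1}|\mu_k|^3 = r^{-2}|\mu_k|^3 \lesssim r^{-2+3\e}$; the linear $\mu_k$ contribution is $r^{-2+\e}$, and the $\lam_k$ pieces contribute at worst $r^{-3+\e}$ (linear) or $r^{-5+3\e}$ (cubic) thanks to the gain of $r^{-1}$ built into the $\lam$ exponents ($2i-3$ and $6i-d-4$ rather than $2i-2$ and $6i-d-1$). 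Cross-terms with $i<k$ are strictly smaller by the improved polynomial decay in \eqref{s567a}. This yields
\begin{align*}
|\mu_k(t, 2^{n+1} r_0) - \mu_k(t, 2^n r_0)| \lesssim (2^n r_0)^{-2+3\e}
\end{align*}
uniformly in $t$; the series is summable, so $\mu_k(t, 2^n r_0)$ converges to some $\beta_k(t)$ with $|\mu_k(t, 2^n r_0) - \beta_k(t)| \lesssim (2^n r_0)^{-2+3\e}$, and sliding $r$ within each dyadic annulus via the same estimate extends this to $|\mu_k(t,r) - \beta_k(t)| \lesssim r^{-2+3\e}$ for every $r \geq r_0$.

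Next I would upgrade to the claimed rate by a bootstrap. The compactness property \eqref{s56} of $\vec u_e(t)$ together with the representation for $\mu_k$ in Lemma \ref{l54} forces $|\mu_k(t, r_0)| \lesssim 1$ uniformly in $t$, hence $|\beta_k(t)| \lesssim 1$ and $|\mu_k(t,r)| \lesssim 1$ for all $r \geq r_0$ uniformly in $t$. Re-inserting this uniform bound into the cubic term $r^{-2}|\mu_k|^3$ now gives $r^{-2}$ instead of $r^{-2+3\e}$, while all other contributions remain bounded by $r^{-2+3\e}$ or better. Repeating the telescoping argument produces
\begin{align*}
|\mu_k(t,r) - \beta_k(t)| \lesssim r^{-2}
\end{align*}
uniformly in $t$, which is the claim.

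The main obstacle is the bookkeeping in the first step: because $\tk = k$ in the odd regime, the coefficient $\lam_k$ coexists with $\mu_k$ at the borderline $r^\e$ growth rate, and one must confirm that the extra factor $r^{-1}$ in the $\mu$-difference estimate \eqref{s529} (prefactor $r^{-2j}$ versus $r^{-2j+1}$ in \eqref{s528}), combined with the exponent shifts in the $\lam$ terms, suffices to subordinate every $\lam_k$ contribution. Once this dominance is verified, the bootstrap uses only the newly established boundedness of $\mu_k$ and requires no parallel improvement for $\lam_k$.
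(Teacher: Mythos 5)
Your proposal is correct and follows essentially the same route as the paper: insert the $\e$--growth bounds of Lemma \ref{l522} into the difference estimate \eqref{s529} with $j=k$ (the cubic $|\mu_k|^3$ term is dominant, giving $r^{-2+3\e}$), extract a bounded limit $\beta_k(t)$ by the dyadic telescoping argument (Corollary \ref{c517}), and then bootstrap using $|\mu_k(t,r)|\lesssim 1$ to reach the rate $r^{-2}$. One small wording fix: in the bootstrap step the non--$\mu_k$ contributions are in fact $\lesssim r^{-3+\e}$ and the linear and cubic $\mu_k$ terms become $\lesssim r^{-2}$, so every term is $\lesssim r^{-2}$; as literally written, ``bounded by $r^{-2+3\e}$ or better'' would only give the conclusion $r^{-2+3\e}$.
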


\begin{proof}
	We insert \eqref{s566} into the difference estimate \eqref{s529} with $j=k$ and obtain for all $R_1 < r_1 < r' < 2r$
	\begin{align*}
	|\mu_k(t,r') - \mu_k(t,r)| \lesssim r^{-2+3\e}.
	\end{align*}
	The dominant contribution in the difference estimate \eqref{s529} comes from the cubic term $|\mu_k|^3$. By Corollary 
	\ref{c517}, we conclude that there exists a bounded function $\beta_k(t)$ such that 
	\begin{align*}
	|\mu_k(t,r) - \beta_k(t)| \lesssim r^{-2+3\e}. 
	\end{align*}
	uniformly in $t$. 
	In particular, $|\mu_k(t,r)|\lesssim 1$ uniformly in $t$ and $r$.  Using this information, we can improve the difference
	estimate \eqref{s529} with $j = k$ to 
	\begin{align*}
	|\mu_k(t,r') - \mu_k(t,r)| \lesssim r^{-2}
	\end{align*}
	and conclude that 
	\begin{align*}
	|\mu_k(t,r) - \beta_k(t)| \lesssim r^{-2}
	\end{align*}
	uniformly in $t$ as desired. 
\end{proof}

\begin{lem}\label{l524}
	We have $\beta_k(t) \equiv 0$. 
\end{lem}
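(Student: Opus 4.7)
The strategy is to adapt the two-step argument used for $\beta_{k-P}$ in Lemma \ref{l519} of the even case. The main input that both steps require is the pointwise bound
\begin{align*}
|u_e(t,r)| \lesssim r^{-2k-1+\epsilon}
\end{align*}
uniformly in $t$ for $r$ large, which follows by inserting the growth estimates \eqref{s567a} (together with the relations $d=4k+1$ and $\tilde k = k$) into the representation $u_e(t,r) = \sum_{j=1}^{k} \lam_j(t,r)\, r^{2j-d}$ from Lemma \ref{l54}; the leading term comes from $j=k$, and all lower-order terms have strictly smaller exponents.

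The first step is to show that $\beta_k(t)$ is constant in time. Apply the averaged identity \eqref{s532b}--\eqref{s532c} with $j = k$. By Lemma \ref{l523}, the left-hand side equals $\beta_k(t_2)-\beta_k(t_1)+O(R^{-2})$. For the right-hand side, the bound on $|u_e|$ above yields the pointwise estimates
\begin{align*}
|u_e(t,r)\, r^{d-2k-1}| + |u_e(t,r)\, r^{d-2k-2}| \cdot r &\lesssim R^{-1+\epsilon},\\
\Bigl| r^{d-2i-2k}\int_r^\infty u_e(t,\rho)\rho^{2i-3}\,d\rho \Bigr| &\lesssim r^{-2+\epsilon},
\end{align*}
on $[R,2R]$, so that $I(i,k) = O(R^{-2+\epsilon})$. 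Using also $|V_e u_e| + |N_e(\cdot,u_e)| \lesssim \rho^{-2k-5+\epsilon}$ (from \eqref{s57}--\eqref{s59}) shows $II(i,k) = O(R^{-4+\epsilon})$. Therefore the right-hand side is $O((t_2-t_1)R^{-2+\epsilon})$, and letting $R\to\infty$ gives $\beta_k(t_2)=\beta_k(t_1)$.

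Now that $\beta_k$ is a constant, the argument of Lemma \ref{l519} applies almost verbatim. Writing $\beta_k = \mu_k(t,R) + O(R^{-2})$ and averaging in $t$ over $[0,T]$,
\begin{align*}
\beta_k = \frac{1}{T}\int_0^T \mu_k(t,R)\,dt + O(R^{-2}).
\end{align*}
Using the representation of $\mu_k$ from Lemma \ref{l54} and $\int_0^T \p_t u_e\,dt = u_e(T,\cdot) - u_e(0,\cdot)$,
\begin{align*}
\Bigl|\int_0^T \mu_k(t,R)\,dt\Bigr| \lesssim \sum_{i=1}^k R^{d-2i-2k}\int_R^\infty \rho^{-2k-1+\epsilon}\rho^{2i-1}\,d\rho \lesssim R^{d-4k-1+\epsilon} = R^\epsilon,
\end{align*}
since $2i-2k-2+\epsilon < -1$ for $i\leq k$. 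Hence $|\beta_k| \lesssim R^\epsilon/T + R^{-2}$. Setting $R = T$ and letting $T\to\infty$ yields $\beta_k = 0$.

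The main obstacle is bookkeeping: one must verify, using $d=4k+1$, $\tilde k=k$, that the exponents in $I(i,k)$, $II(i,k)$, and in the time-averaging integral all yield a net decay, and that no resonance ($d - 2i - 2j = 0$) arises in the prefactors of \eqref{s532b} for $j=k$---which is the case, since $d-2i-2k = 2k+1-2i \geq 1$ for $1\leq i\leq k$. Once these power counts are in place, the proof proceeds exactly as sketched.
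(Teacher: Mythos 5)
Your proposal is correct and follows essentially the same two-step argument as the paper: first showing $\beta_k$ is constant in time via the averaged identity \eqref{s532b}--\eqref{s532c} with the bound $|u_e(t,r)|\lesssim r^{-2k-1+\e}$ giving $I(i,k)=O(R^{-2+\e})$ and $II(i,k)=O(R^{-4+\e})$, and then averaging $\mu_k(t,R)$ in time over $[0,T]$, setting $R=T$, and letting $T\to\infty$. The power counts and the non-resonance observation $d-2i-2k\geq 1$ match the paper's computation, so no gap remains.
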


\begin{proof}
	The proof is in similar spirit to the proofs of Lemmas \ref{l518} and \ref{l519}.  We first note that by Lemma 
	\ref{l54}, \eqref{s567a}, and the relation $d = 4k + 1$, we have 
	\begin{align}\label{s570}
	|u_e(t,r)| = \Bigl | \sum_{j = 1}^k \lam_j(t,r) r^{2j-d} \Bigr | \lesssim r^{2k - d+\e} = r^{-2k -1 + \e}. 
	\end{align}
	By \eqref{s569} and \eqref{s532b}
	\begin{align*}
	\beta_{k}(t_2) - \beta_{k }(t_1)
	&= \frac{1}{R} \int_R^{2R}[\mu_{k}(t_2,r) - \mu_{k}(t_1,r) ] dr + O(R^{-2}) \\
	&= \sum_{i = 1}^k \frac{c_i c_{k}}{d - 2i - 2k} 
	\int_{t_1}^{t_2} I(i, k) + II(i, k) dt
	\end{align*}
	where $I(i,k)$ and $II(i,k)$ are defined as in \eqref{s532c}.  The estimates for the potential $V_e$ and nonlinearity
	$N_e$, \eqref{s57}--\eqref{s59}, along with \eqref{s570} imply 
	\begin{align*}
	\Bigl | -V_e(r) u_e + N_e(r, u_e) \Bigr |
	\lesssim r^{-2k - 5 + \e }. 
	\end{align*}
	Using that $d = 4 k + 1$ we have 
	\begin{align*}
	|II(i, k - P)| = \Bigl | \frac{1}{R} \int_R^{2R} r^{d - 2i - 2k - 2P} \int_r^\infty [ -V_e(\rho) u_e(t,\rho)
	+ N_e(\rho,u_e(t,\rho) ] \rho^{2i-1} d\rho dr \Bigr | \lesssim R^{-4+\e}.
	\end{align*}
	We now estimate the remaining term, 
	\begin{align*}
	I(i,k) &= -\frac{1}{R} (u_e(t,r) r^{d-2k-1}) \big |_{r = R}^{r = 2R} + (2i - 2k - 1) \frac{1}{R} 
	\int_R^{2R} u_e(t,r) r^{d- 2k -2} dr \\
	&\:- \frac{(2\ell - 2i + 3)(2i-2)}{R} \int_R^{2R} r^{d-2i-2k} \int_r^\infty u_e(t,\rho) \rho^{2i-3}d\rho dr.
	\end{align*}
	Using \eqref{s570}, it is simple to conclude that 
	\begin{align*}
	|I(i,k)| \lesssim R^{-2 + \e}. 
	\end{align*}
	Thus, 
	\begin{align*}
	\beta_k(t_2) - \beta_k(t_1) = O(R^{-2+ \e})(1 + |t_2 - t_1|).
	\end{align*}
	We let $R \rar \infty$ and conclude that $\beta_k(t_2) = \beta_k(t_1)$ as desired.
	
	We now write $\beta_k$ in place of $\beta_k(t)$.  By the previous paragraph, we have that 
	\begin{align*}
	\beta_k = \mu_k(t,R) + O(R^{-2}) 
	\end{align*}
	where the $O(\cdot)$ term is uniform in time. Integrating the previous expression from 0 to $T$ and dividing by $T$ yields
	\begin{align*}
	\beta_{k} = \frac{1}{T} \int_0^T \mu_{k}(t,R) dt + O(R^{-2}).
	\end{align*}
	By Lemma \ref{l54}, \eqref{s570}, and the relations $d = 4 k + 1$, we have 
	\begin{align*}
	\Bigl | \int_0^T \mu_{k}(t,R) dt \Bigr | &\lesssim 
	\sum_{i = 1}^k R^{d - 2i - 2k} \Bigl | \int_R^\infty \int_0^T \p_t u_e(t,\rho) dt \rho^{2i - 1} d\rho \Bigr | \\
	&\lesssim 
	\sum_{i = 1}^k R^{d - 2i - 2k} \int_R^\infty  |u_e(T,\rho) - u_e(0,\rho)| \rho^{2i - 1} d\rho  \\
	&\lesssim R^{\e}.  
	\end{align*}
	It follows that 
	\begin{align*}
	\beta_{k} = O ( R^{3\e} / T) + O(R^{-2}). 
	\end{align*}
	We set $R = T$ and let $T \rar \infty$ to conclude that $\beta_{k} = 0$ as desired. 
\end{proof}

\begin{lem}\label{l525}
	Let $\epsilon > 0$ be fixed and sufficiently small.  Then as long as $\delta_1$ as in Lemma 
	\ref{l59} is sufficiently small, we have uniformly in $t$, 
	\begin{align}
	\begin{split}\label{s571}
	|\lam_{k}(t,r)| &\lesssim r^{\e}, \\
	|\mu_{k-1}(t,r)| &\lesssim r^{\e}, \\
	|\lam_i(t,r)| &\lesssim r^{2k - 2j - 2 + \e}, \quad \forall 1 \leq i < k, \\
	|\mu_i(t,r)| &\lesssim r^{2k - 2j - 3 + \e}, \quad \forall 1 \leq i \leq < k, i \neq k -1. 
	\end{split}
	\end{align}
\end{lem}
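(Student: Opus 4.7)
The strategy is to feed the improved bound $|\mu_k(t,r)| \lesssim r^{-2}$ coming from Lemma \ref{l523} together with Lemma \ref{l524} (which together show that $\mu_k - \beta_k = O(r^{-2})$ with $\beta_k=0$) back into the difference estimates \eqref{s528} and \eqref{s529} from Lemma \ref{l59}, keeping all other coefficients at the $\e$--growth bounds furnished by Lemma \ref{l522}. Once the relevant sums are re-estimated, we invoke Corollary \ref{cor511} (for the terms that grow in $r$) and Corollary \ref{c517} (for the terms whose difference decays in $r$) to upgrade the pointwise bounds.

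Concretely, using $d = 4k+1$, $\tk = k$, and the bounds $|\lam_k| \lesssim r^{\e}$, $|\mu_k| \lesssim r^{-2}$, $|\lam_i| \lesssim r^{2k-2i-1+3\e}$ and $|\mu_i| \lesssim r^{2k-2i-2+3\e}$ for $i < k$, one checks by direct inspection that the combined sum appearing on the right--hand sides of \eqref{s528} and \eqref{s529} is controlled by $r^{2k-3+\e}$, with the dominant contribution coming from the single linear term $r^{2k-3}|\lam_k|$. Indeed the quadratic and cubic self--interactions in $\lam_k$ and $\mu_k$ are now much smaller than before thanks to the new $r^{-2}$ bound on $\mu_k$, and the $\lam_i,\mu_i$ contributions with $i<k$ are of order $r^{2k-4+3\e}$ or better. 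Plugging this in produces
\begin{align*}
|\lam_j(t,r') - \lam_j(t,r)| &\lesssim r^{-2j+1} \cdot r^{2k-3+\e} = r^{2k-2j-2+\e}, \\
|\mu_j(t,r') - \mu_j(t,r)| &\lesssim r^{-2j} \cdot r^{2k-3+\e} = r^{2k-2j-3+\e},
\end{align*}
for all $R_1 \leq r \leq r' \leq 2r$.

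From these difference estimates, I would read off the four asserted bounds as follows. For $j=k$, the $\lam_k$ difference estimate has exponent $-2+\e<0$, so Corollary \ref{c517} yields a limit for $\lam_k$, hence $|\lam_k(t,r)|\lesssim 1 \lesssim r^{\e}$. For $1 \leq j < k$, the exponent $2k-2j-2+\e$ is $\geq \e$, so Corollary \ref{cor511} (with $a = 2k-2j-2+\e$) gives $|\lam_j(t,r)| \lesssim r^{2k-2j-2+\e}$. Similarly, for $j=k-1$ the $\mu_{k-1}$ difference estimate has exponent $-1+\e<0$, so Corollary \ref{c517} yields $|\mu_{k-1}(t,r)|\lesssim 1 \lesssim r^{\e}$. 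For $1 \leq j < k$ with $j \neq k-1$, the exponent $2k-2j-3+\e$ is $\geq \e$, so Corollary \ref{cor511} gives $|\mu_j(t,r)| \lesssim r^{2k-2j-3+\e}$.

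The main obstacle is the bookkeeping: verifying that once $|\mu_k| \lesssim r^{-2}$ is available, the single linear term $r^{2k-3}|\lam_k|$ is genuinely dominant in the sums from \eqref{s528}--\eqref{s529}, so that all other contributions (quadratic and cubic self--interactions in $\mu_k$, nonlinear contributions from the $\lam_i,\mu_i$ with $i<k$, and the small quadratic/cubic $\lam_k$ terms) are harmless. This is the analogue, in the odd $\ell$ setting, of the inductive base step carried out in Lemma \ref{l511} and used as the $P=0$ case of Proposition \ref{p513} in the even $\ell$ setting; it will play the same role as the base case for the subsequent induction that establishes the full asymptotic decay of the projection coefficients in the odd case.
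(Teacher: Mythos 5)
Your argument follows essentially the paper's route: both proofs feed the $\mu_k$ decay from Lemmas \ref{l523}--\ref{l524} together with the $\e$--growth bounds of Lemma \ref{l522} into the difference estimates \eqref{s528}--\eqref{s529}, identify the linear term $r^{2k-3}|\lam_k|\lesssim r^{2k-3+\e}$ as the dominant contribution (your exponent bookkeeping here, using $d=4k+1$ and $\tk=k$, checks out), and then upgrade the pointwise bounds via Corollary \ref{cor511} for the exponents $\geq \e$ and Corollary \ref{c517} for the decaying differences. Your treatment of $\lam_j$ for $1\le j\le k$, of $\mu_{k-1}$, and of $\mu_j$ for $j\le k-2$ is correct, and your observation that the crude bound $|\mu_k|\lesssim r^{-2}$ already suffices for these coefficients is accurate (the paper reaches the same recursion after first sharpening $\mu_k$, but the $\lam_k$ term dominates either way).

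The one item you omit is the improved bound for $\mu_k$ itself. The garbled index range in the statement obscures it, but this lemma is invoked as the $P=0$ base case of Proposition \ref{p526}, whose estimate at $j=k$ requires $|\mu_k(t,r)|\lesssim r^{-3+\e}$; accordingly, the first half of the paper's proof is devoted to exactly this upgrade. It comes for free from your own computation: taking $j=k$ in \eqref{s529}, the right--hand side is $\lesssim r^{-2k}\cdot r^{2k-3+\e}=r^{-3+\e}$ (the paper records the two inputs as \eqref{s572} and \eqref{s573}), and since $\mu_k(t,r)\to 0$ as $r\to\infty$ (because $\beta_k\equiv 0$ by Lemma \ref{l524}), Corollary \ref{c517} gives $|\mu_k(t,r)|\lesssim r^{-3+\e}$ uniformly in $t$. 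With that one line added, your proof coincides with the paper's.
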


\begin{proof}
	We first establish 
	\begin{align*}
	|\mu_k(t,r)| \lesssim r^{-3 + \e},
	\end{align*}
	uniformly in time.  
	By \eqref{s567a}, we have uniformly in time 
	\begin{align}
	\begin{split}\label{s572}
	\sum_{i = 1}^k & r^{2i - 3} |\lam_i(t,r)| + r^{4i - d - 2}|\lam_i(t,r)|^2 + r^{6i - d -4} |\lam_i(t,r)|^3 \\
	&+ \sum_{i = 1}^{k-1} r^{2i - 2} |\mu_i(t,r)| + r^{4i - d} |\mu_i(t,r)|^2 + r^{6i - d -1} |\mu_i(t,r)|^3 
	\lesssim r^{2k - 3 + \e},
	\end{split}
	\end{align}
	where the dominant contribution comes from the linear term involving $\lam_k$.  By \eqref{s569} and the fact that $\beta_k = 0$, 
	we have 
	\begin{align}\label{s573}
	r^{2k - 2} |\mu_k(t,r)| + r^{4k - d} |\mu_k(t,r)|^2 + r^{6k - d -1} |\mu_k(t,r)|^3 \lesssim r^{2k - 4}, 
	\end{align}
	uniformly in time.  Inserting \eqref{s572} and \eqref{s573} into the difference estimate \eqref{s529} with $j = k$ implies that for all 
	$r,r'$ with $R_1 \leq r \leq r' \leq 2r$, we have
	\begin{align*}
	|\mu_k(t,r') - \mu_k(t,r)| \lesssim r^{-3 + \e} 
	\end{align*}
	uniformly in time.   Since $\lim_{r \rar \infty} \mu_k(t,r) = 0$, we conclude by Corollary \ref{c517} that 
	\begin{align}\label{s574}
	|\mu_k(t,r)| \lesssim r^{-3 + \e}
	\end{align}
	uniformly in time. 
	
	We now establish the other estimates in \eqref{s571}.  Fix $r_0 > R_1$.  By \eqref{s574}
	\begin{align*}
	(2^n r_0)^{2k - 2} |\mu_k(t,2^n r_0)| + (2^n r_0)^{4k - d} |\mu_k(t,2^n r_0)|^2 + (2^n r_0)^{6k - d -1} 
	|\mu_k(t,2^n r_0)|^3 
	\lesssim (2^n r_0)^{2k - 5 + \e}
	\end{align*}
	uniformly in time.  This estimate along with \eqref{s572} and the difference estimate \eqref{s528} imply for all $1 \leq j \leq k$
	\begin{align*}
	|\lam_j(t,2^{n+1} r_0)| \leq (1 + C \de_1) |\lam_j(t,2^n r_0)| + C (2^n r_0)^{2k - 2j - 2 + \e},
	\end{align*}
	uniformly in time.  By Corollary \ref{cor511}, we conclude that 
	\begin{align*}
	|\lam_j(t,r)| &\lesssim r^{2k - 2j - 2 + \e}, \quad \forall 1 \leq j < k, \\
	|\lam_k(t,r)| &\lesssim r^{\e}, 
	\end{align*}
	uniformly in time.  A similar argument establishes the remaining bounds in \eqref{s571} involving the $\mu_j$'s.  
	
\end{proof}

As in the case that $\ell$ is even, we use Lemma \ref{l525} as the base case for an induction argument.  In particular, we 
will prove the following.

\begin{ppn}\label{p526}
	Suppose $d = 5,9,13,\ldots$ and $\e$, $\de_1$,$r_0$ are as in Lemma \ref{l525}.  For $P = 0, 1, \ldots, k-1$ the following estimates hold uniformly in time:
	\begin{align}
	\begin{split}\label{s575}
	|\lam_j(t,r)| &\lesssim r^{2(k - P -j) - 2 +  \e}, \quad \forall 1 \leq j \leq k \mbox{ with } j \neq  k - P, \\
	|\lam_{k - P}(t,r)| &\lesssim r^{\e},\\
	|\mu_j(t,r)| &\lesssim r^{2(k - P -j) - 3 +  \e}, \quad \forall 1 \leq j \leq k \mbox{ with } j \neq k - P-1, \\
	|\mu_{k - P-1}(t,r)| &\lesssim r^{\e}.
	\end{split}
	\end{align}
\end{ppn}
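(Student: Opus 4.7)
The plan is to mimic the inductive argument used in the even case (Proposition \ref{p513}), adjusted for the fact that $k = \tk$ when $\ell$ is odd. The base case $P = 0$ is precisely Lemma \ref{l525}. For the inductive step, assume \eqref{s575} for some $0 \leq P \leq k - 2$ and derive \eqref{s575} at $P + 1$. At level $P$ the distinguished pair of coefficients is $\lam_{k-P}$ (the slowest-decaying $\lam$) and $\mu_{k-P-1}$ (the slowest-decaying $\mu$), playing the role analogous to $\lam_{\tk - P}$ and $\mu_{k - P}$ in the even case.

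First, I would insert the hypothesis \eqref{s575} into the difference estimates \eqref{s528}--\eqref{s529}. The contributions from the coefficients $\lam_i$ with $i \neq k - P$ and $\mu_i$ with $i \neq k - P - 1$ are strictly better, so the dominant right-hand side at scale $r$ comes from the linear terms in $\lam_{k-P}$ and $\mu_{k-P-1}$. Dyadically this yields $|\lam_{k-P}(t, 2^{n+1} r_0) - \lam_{k-P}(t, 2^n r_0)| \lesssim (2^n r_0)^{-2 + C\e}$ and the analogous $(2^n r_0)^{-1 + C\e}$ bound for $\mu_{k-P-1}$. These are summable, so spatial limits $\al_{k-P}(t)$ and $\beta_{k-P-1}(t)$ exist, are bounded uniformly in $t$ by compactness of $K$ at the base radius $r_0$, and after a simple bootstrap satisfy $|\lam_{k-P}(t,r) - \al_{k-P}(t)| = O(r^{-2})$ and $|\mu_{k-P-1}(t,r) - \beta_{k-P-1}(t)| = O(r^{-1})$.

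Next, paralleling Lemmas \ref{l517}--\ref{l519} (and \ref{l524}), I would use the pointwise identity \eqref{s532a} to deduce that $\al_{k-P}(t)$ is independent of $t$, and then apply the averaged identity \eqref{s532b}--\eqref{s532c} with $j = k - P - 1$. Using the preliminary asymptotic $r^{d - 2(k-P)} u_e(t,r) = \al_{k-P} + o(1)$ (derived from Lemma \ref{l54} and the bounds already in hand) to isolate the dominant piece of $I(i, k - P - 1)$, one arrives at an identity of the form $\beta_{k-P-1}(t_2) - \beta_{k-P-1}(t_1) = C_0 (t_2 - t_1) \al_{k-P} + O(R^{-\gamma})$ for some $\gamma > 0$. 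Letting $R \rar \infty$ and invoking the boundedness of $\beta_{k-P-1}$ forces $\al_{k-P} = 0$; the same identity then gives $\beta_{k-P-1}$ constant in time, and the time-averaging trick used in Lemmas \ref{l519} and \ref{l524} (integrating $\beta_{k-P-1} = \mu_{k-P-1}(t,R) + O(R^{-1})$ in $t$ and setting $R = T \rar \infty$) yields $\beta_{k-P-1} = 0$. With these vanishing limits we have the improved pointwise decay $\lam_{k-P}(t,r) = O(r^{-2})$ and $\mu_{k-P-1}(t,r) = O(r^{-1})$. Reinserting into \eqref{s528}--\eqref{s529} and applying Corollary \ref{c517} (for coefficients already known to tend to $0$) together with Corollary \ref{cor511} (for the two new slow coefficients $\lam_{k-P-1}, \mu_{k-P-2}$, which inherit the $r^\e$ growth bound) delivers \eqref{s575} at level $P + 1$. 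The argument terminates at $P = k - 1$, at which point the $\mu$-index $k - P - 1 = 0$ has dropped out of the statement, so the $\mu$ half of the induction step becomes vacuous there.

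The main obstacle is the algebraic check that the constant $C_0$ appearing in the key identity is nonzero; this requires a contour-integration computation in the spirit of Remark 5.29 of \cite{klls2}, but with indices shifted for the odd case. A secondary, largely bookkeeping, difficulty is to verify at every step that when the inductive hypothesis is inserted into the cubic and quadratic error pieces of \eqref{s528}--\eqref{s529} these are never dominant, which uses $d = 4k + 1 \geq 5$ and careful accounting of the powers of $r$ generated by the Strauss-type weights.
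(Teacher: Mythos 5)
Your proposal follows essentially the same route as the paper: induction on $P$ with Lemma \ref{l525} as the base case, the distinguished pair $\lam_{k-P}$, $\mu_{k-P-1}$ treated via the difference estimates \eqref{s528}--\eqref{s529} to produce bounded limits $\al_{k-P}(t)$, $\beta_{k-P-1}(t)$ with rates $O(r^{-2})$, $O(r^{-1})$, then the identities \eqref{s532a}--\eqref{s532c} (with the shifted index $j=k-P-1$ and the nonvanishing constant $C_0$) forcing $\al_{k-P}=0$ and, by time averaging with $R=T\rar\infty$, $\beta_{k-P-1}=0$, and finally reinsertion together with Corollaries \ref{c517} and \ref{cor511} to advance the induction, exactly as in the paper's Steps 1--6 modeled on Lemmas \ref{l514}--\ref{l520}. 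This is correct and matches the paper's argument.
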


In we take $P = k-1$ in Proposition \ref{p526}, then we obtain the following.

\begin{ppn}\label{p527}
	With the same hypotheses as in Proposition \ref{p526}, the following estimates hold uniformly in time:
	\begin{align}
	\begin{split}\label{s576}
	|\lam_j(t,r)| &\lesssim r^{-2j + \e}, \quad \forall 1 < j \leq k, \\
	|\lam_1(t,r)| &\lesssim r^{\e}, \\
	|\mu_j(t,r)| &\lesssim r^{-2j -1 + \e}, \quad \forall 1 \leq j \leq k.
	\end{split}
	\end{align}
\end{ppn}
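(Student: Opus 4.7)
The plan is to prove \eqref{s575} by induction on $P$, following the same template used to establish Proposition \ref{p513} in the even $\ell$ case. Lemmas \ref{l522}--\ref{l525}, together with the preliminary treatment of the extra coefficient $\mu_k$ in Lemmas \ref{l523}--\ref{l524}, furnish the base case $P=0$. For the inductive step we assume \eqref{s575} holds at some level $P$ with $0 \leq P \leq k-2$ and aim to upgrade the two remaining slow coefficients $\lambda_{k-P}$ and $\mu_{k-P-1}$ from $O(r^{\e})$ to $O(r^{-2})$ and $O(r^{-1})$ respectively; inserting these bounds back into the difference estimates \eqref{s528} and \eqref{s529} together with the induction hypothesis then yields \eqref{s575} at level $P+1$ via Corollaries \ref{c517} and \ref{cor511}, in direct analogy with Lemma \ref{l520}. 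The terminal transition $P = k-2 \to k-1$ is slightly simpler, since there is no $\mu_0$ coefficient to track.

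The inductive step itself would be carried out in four sub-steps paralleling Lemmas \ref{l514}--\ref{l519}. First, feeding the induction hypothesis into the difference estimates gives bounded functions $\alpha_{k-P}(t)$ and $\beta_{k-P-1}(t)$ with $|\lambda_{k-P}(t,r) - \alpha_{k-P}(t)| = O(r^{-2})$ and $|\mu_{k-P-1}(t,r) - \beta_{k-P-1}(t)| = O(r^{-1})$, whence Lemma \ref{l54} yields the preliminary asymptotic $r^{d-2(k-P)} u_e(t,r) = \alpha_{k-P}(t) + O(r^{-2+3\e})$. Second, the time-difference identity \eqref{s532a} with $j = k-P$, $j' = k-P-1$ shows $\alpha_{k-P}(t)$ is independent of time. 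Third, the averaged identity \eqref{s532b} with $j = k-P-1$, combined with the bounds \eqref{s57}--\eqref{s59}, produces
\[
\beta_{k-P-1}(t_2) - \beta_{k-P-1}(t_1) = C_0(t_2 - t_1)\,\alpha_{k-P} + O\!\left(R^{-2+3\e}(1 + |t_2 - t_1|)\right)
\]
after summing $I(i,k-P-1) + II(i,k-P-1)$ against the weights $c_i c_{k-P-1}/(d-2i-2(k-P-1))$; letting $R \to \infty$ and using boundedness of $\beta_{k-P-1}$ forces $\alpha_{k-P} = 0$ and shows $\beta_{k-P-1}$ is time-independent. Fourth, with $\alpha_{k-P} = 0$ in hand we get $u_e(t,r) = O(r^{-d + 2(k-P) - 2 + 3\e})$, and the time-averaging argument of Lemma \ref{l519}, based on writing $\beta_{k-P-1} = T^{-1} \int_0^T \mu_{k-P-1}(t,R)\,dt + O(R^{-1})$ and controlling the integral via Lemma \ref{l54}, yields $\beta_{k-P-1} = 0$ upon setting $R = T$ and sending $T \to \infty$.

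The main obstacle is verifying that the constant $C_0$ appearing in the third sub-step is genuinely nonzero for every $0 \leq P \leq k-2$, since the conclusion $\alpha_{k-P} = 0$ rests entirely on this. Although the pairing $(\lambda_{k-P}, \mu_{k-P-1})$ differs in its index labeling from the pairing $(\lambda_{\tilde k - P}, \mu_{k-P})$ of the even $\ell$ case, the underlying algebraic relationship is structurally identical, and the contour-integration identity behind Remark 5.29 of \cite{klls2} should adapt directly once one substitutes $d = 4k+1$ and $\tilde k = k$. The genuine structural asymmetry between the two parities is in fact concentrated in the pre-induction killing of the extra coefficient $\mu_k$ in Lemmas \ref{l523}--\ref{l524}, which has no analogue when $\tilde k = k+1$; once that asymmetry has been absorbed into the base case, the induction proceeds essentially verbatim.
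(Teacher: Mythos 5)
Your argument is sound, but it proves far more than the statement requires: in the paper, Proposition \ref{p527} is obtained in one line by taking $P = k-1$ in Proposition \ref{p526}, which is already established and whose hypotheses are exactly those quoted. Indeed, with $P = k-1$ the exceptional indices in \eqref{s575} become $k-P = 1$ and $k-P-1 = 0$ (the latter vacuous, since the $\mu_j$ only run over $1 \leq j \leq k$), and the exponents $2(k-P-j)-2+\e = -2j+\e$ and $2(k-P-j)-3+\e = -2j-1+\e$ reproduce \eqref{s576} exactly. What you have written out --- base case from Lemmas \ref{l522}--\ref{l525} including the preliminary elimination of $\mu_k$ in Lemmas \ref{l523}--\ref{l524}, an inductive step in sub-steps mirroring Lemmas \ref{l514}--\ref{l519}, the conclusion via Corollaries \ref{c517} and \ref{cor511} as in Lemma \ref{l520}, and the nonvanishing of $C_0$ via the contour-integration computation cited from \cite{klls2} --- is essentially the paper's own proof of Proposition \ref{p526}, not new content needed for Proposition \ref{p527}. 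So there is no gap, and your side observations (the shifted $\lambda$/$\mu$ index pairing relative to the even case, and the absence of a $\mu_0$ coefficient at the terminal step) are accurate; the only criticism is that you re-derive a proposition you are entitled to quote, instead of noticing that the claimed estimates are literally its $P = k-1$ case.
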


\begin{proof}[Proof of Proposition \ref{p526}]
	The proof of Proposition \ref{p526} is nearly identical to the proof of Proposition \ref{p513}.  Therefore, we will only
	outline the main steps of the proof and refer the reader to the proofs given for the case that $\ell$ is even for the details.
	The proof is by induction on $P$.  The case $P = 0$ is covered in Lemma \ref{l525}.  
	We now assume that \eqref{s575} holds for all $P$ with $0 \leq P < k-1$.  
	\begin{description}
		\item[Step 1]  There exist bounded functions $\al_{k-P}(t)$ and $\beta_{k-P-1}(t)$ defined on $\R$ such that 
		\begin{align*}
		|\lam_{k - P}(t,r) - \al_{k-P}(t)| &\lesssim r^{-2}, \\
		|\mu_{k - P - 1}(t,r) - \beta_{k-P-1}(t)| &\lesssim r^{-1},
		\end{align*}
		uniformly in $t$. For details, see the proof of Lemma \ref{l514}. 
		
		\item[Step 2] We have 
		\begin{align*}
		r^{d - 2(k-P)} u_e(t,r) = \al_{k-P}(t) + O ( r^{-2 + \e} ), 
		\end{align*}
		where the $O(\cdot)$ term is uniform in time.  For details, see the proof of Corollary \ref{c516}. 
		
		\item[Step 3]  The function $\al_{k-P}(t)$ is constant in time and from now on we write $\al_{k-p}$ in place of 
		$\al_{k - P}(t)$.  For details, see the proof Lemma \ref{l517}.
		
		\item[Step 4]  We have $\al_{k-P} = 0$ and $\beta_{k - P - 1}(t)$ is constant in time.  From now on we write $\beta_{k - P - 1}$ 
		in place of $\beta_{k - P - 1}(t)$.  For details, see the proof of Lemma \ref{l518}.  
		
		\item[Step 5]  We have $\beta_{k - P - 1} = 0$.  For details, see the proof of Lemma \ref{l519}.  
		
		From Steps 1--5, we conclude that 
		\begin{align}
		\begin{split}\label{s577}
		\lam_{k - P}(t) &= O(r^{-2}), \\
		\mu_{k - P - 1}(t) &= O(r^{-1}),
		\end{split}
		\end{align}
		uniformly in time.  Inserting \eqref{s575} and \eqref{s577} into the difference estimates \eqref{s528} and \eqref{s529},
		we conclude
		that the following holds. 
		
		\item[Step 6]  If \eqref{s575} holds for all $0 \leq P < k-1$, then \eqref{s575} holds for $P + 1$.  For details, 
		see the proof of Lemma \ref{l520}.  
	\end{description}
	
	By induction and Step 6, we have proved Proposition \ref{p526}.  
	
\end{proof}

As in the case that $\ell$ is even, from Proposition \ref{p527} we deduce the following behavior for $\lam_1$. 

\begin{lem}\label{l528}
	There exists $\al \in \R$ such that 
	\begin{align}\label{s578}
	|\lam_1(r) - \al| = O(r^{-2}).  
	\end{align}
	Moreover, we have the slightly improved decay rates
	\begin{align}
	\begin{split}\label{s579}
	|\lam_j(r)| &\lesssim r^{-2j}, \quad \forall 1 < j \leq k, \\
	|\mu_j(r)| &\lesssim r^{-2j - 1}, \quad \forall 1 \leq j \leq k.
	\end{split} 
	\end{align}
\end{lem}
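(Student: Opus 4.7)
The plan is to adapt the proof of Lemma \ref{l521} essentially verbatim, using Proposition \ref{p527} in place of Proposition \ref{p512} as the input decay. The structure of the difference estimates \eqref{s528}, \eqref{s529} is identical in both the even and odd $\ell$ cases, and the exponent bookkeeping works out in the same way.

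First I would plug the bounds \eqref{s576} into the inner sum appearing in \eqref{s528}. Splitting off the $i=1$ term in the $\lambda$-sum and using $d = 4k+1$, a term-by-term count shows
\begin{align*}
\sum_{i=2}^{k} r^{2i-3}|\lam_i(t,r)| + r^{4i-d-2}|\lam_i(t,r)|^2 + r^{6i-d-4}|\lam_i(t,r)|^3 &\\
+ \sum_{i=1}^{k} r^{2i-2}|\mu_i(t,r)| + r^{4i-d}|\mu_i(t,r)|^2 + r^{6i-d-1}|\mu_i(t,r)|^3 &\lesssim r^{-3+\e},
\end{align*}
with the linear terms dominating, while the remaining $i=1$ contribution from $\lambda_1$ satisfies $r^{-1}|\lam_1(t,r)| + r^{2-d}|\lam_1(t,r)|^2 + r^{2-d}|\lam_1(t,r)|^3 \lesssim r^{-1+\e}$ (here $d \geq 5$ handles the higher powers comfortably). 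Multiplying by the prefactor $r^{-2j+1}=r^{-1}$ from \eqref{s528} and integrating yields
\begin{align*}
|\lam_1(r') - \lam_1(r)| \lesssim r^{-2+\e}
\end{align*}
for all $R_1 < r \leq r' \leq 2r$. Corollary \ref{c517} then produces $\alpha \in \R$ with $|\lam_1(r) - \al| \lesssim r^{-2+\e}$, and in particular $|\lam_1(r)| \lesssim 1$.

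Next I would bootstrap: the improved bound $|\lam_1(r)| \lesssim 1$ upgrades the $\lambda_1$ contribution in the difference estimate from $r^{-1+\e}$ to $r^{-1}$ (losing the epsilon), so re-running \eqref{s528} gives $|\lam_1(r') - \lam_1(r)| \lesssim r^{-2}$, and Corollary \ref{c517} delivers \eqref{s578}. For the remaining coefficients, I plug these same improved pointwise bounds into \eqref{s528} and \eqref{s529} with $j \geq 2$ and obtain
\begin{align*}
|\lam_j(r') - \lam_j(r)| \lesssim r^{-2j}, \qquad |\mu_j(r') - \mu_j(r)| \lesssim r^{-2j-1}.
\end{align*}
Since Proposition \ref{p527} already guarantees that these coefficients tend to $0$ as $r \to \infty$ (their exponents $-2j+\e$ and $-2j-1+\e$ are negative), Corollary \ref{c517} upgrades the decay to the claimed \eqref{s579}.

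The main obstacle, though a mild one, is bookkeeping the exponents in the sums: one must verify that the nonlinear (quadratic and cubic) contributions are genuinely dominated by the linear term $r^{-3+\e}$ in the $d = 4k+1$ case, so that they do not spoil the $r^{-2+\e}$ bound on the $\lambda_1$ difference. This is where the earlier work (Lemma \ref{l525} through Proposition \ref{p527}) showing all nonlinear pieces have been beaten down below the linear $\lambda_1$ contribution pays off, and the odd-$\ell$ peculiarity (the extra $\mu_k$ coefficient) plays no role since it was already eliminated in Lemma \ref{l524}.
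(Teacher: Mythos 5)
Your proposal is correct and follows exactly the route the paper takes: the paper's proof of Lemma \ref{l528} is simply the proof of Lemma \ref{l521} repeated with the input decay \eqref{s576} from Proposition \ref{p527} in place of \eqref{s541}, including the same bootstrap to remove the $\e$ loss and the same use of Corollary \ref{c517} for the remaining coefficients. Your exponent bookkeeping in the odd case ($d=4k+1$, with $\mu_k$ already handled) matches what the paper implicitly relies on.
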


\begin{proof}
	The proof is identical to the proof of Lemma \ref{l521}.  
\end{proof}

The proof of Proposition \ref{p58} for the case that $\ell$ odd is identical to the case that $\ell$ is even and we omit it. 

\subsubsection*{Step 3:  Conclusion of the Proof of Proposition \ref{p53}}

Let $\al$ be as in Proposition \ref{p58}.  We now show that there exists a unique static solution $U_+$ to \eqref{s52e} such that 
$\vec u(0) = (U_+, 0)$ on $r \geq \eta$ (where $U_+$ does not depend on $\eta$).  We distinguish two cases: $\al = 0$ and $\al \neq 0$.  For the case $
\al = 0$, we will show that $\vec u(0) = (0,0)$ on $r \geq \eta$.  We first show that if $\al = 0$, then 
$\vec u(0,r)$ is compactly supported.  

\begin{lem}\label{l529}
	Let $\vec u_e$ be as in Proposition \ref{p53}, and let $\al$ be as in Proposition \ref{p58}.  If $\al = 0$, then 
	$\vec u(0,r)$ is compactly supported in $r \in (\eta,\infty)$.   
\end{lem}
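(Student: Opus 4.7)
The plan is to leverage the vanishing of $\alpha$ to extract first improved, then super-polynomial, decay of $\vec u_e(0,r)$ at infinity, and then use Proposition \ref{p54} together with Lemma \ref{l57} and the compactness property \eqref{s56} to upgrade rapid decay into compact support of $\vec u(0)$.

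First, plugging $\alpha=0$ into Lemma \ref{l521} (or Lemma \ref{l528} in the odd-$\ell$ case) gives $|\lambda_1(r)|=O(r^{-2})$, and combining with the bounds on $\lambda_j$, $\mu_j$ already produced there together with the norm identities in Lemma \ref{l54} yields
\[
\|\vec u_e(0)\|_{\h(r\ge R)}=O(R^{-2}) \quad \text{as } R\to\infty.
\]

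Second, I would bootstrap this decay by iterating the entire scheme behind Proposition \ref{p58}. At each stage the next leading coefficient in the asymptotic expansion of $u_e(0,r)$ at infinity is identified, and the time-averaging trick of Lemmas \ref{l517}--\ref{l519}, built on the key identity \eqref{s532b}--\eqref{s532c} from Lemma \ref{l511a} and on the compactness of $K$, forces that coefficient to vanish. Continuing the iteration produces
\[
\|\vec u_e(0)\|_{\h(r\ge R)}\lesssim_N R^{-N} \quad \text{for every } N\in\N.
\]

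Third, fix $R_*>\eta$ large and form the truncated data $(u_{0,R_*},u_{1,R_*})$ of \eqref{s519}--\eqref{s520}, whose $\h(r\ge\eta)$-norm is $\lesssim_N R_*^{-N}$. By Lemma \ref{l57} the truncated nonlinear evolution $u_{R_*}$ agrees with the linear evolution $u_{R_*,L}$ up to errors of the same order uniformly in $t$. Finite speed of propagation and \eqref{s56} drive $\|\vec u_{R_*}(t)\|_{\h(r\ge R_*+|t|)}$ to zero as $|t|\to\infty$, so Proposition \ref{p54} in the appropriate time direction forces $\|\pi_{R_*}^\perp\vec u_{R_*,L}(0)\|_{\h(r\ge R_*)}\lesssim_N R_*^{-N}$. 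Matching this with the bootstrap estimate on $\pi_{R_*}\vec u_e(0)$ and the rigid polynomial scales of the elements of $P(R_*)$ forces all projection coefficients to vanish identically for $r\ge R_*$, hence $\vec u_e(0,r)\equiv 0$ on $\{r\ge R_*\}$. Undoing the change of variables $u=(r/\la r\ra)^{(d-1)/2}u_e$ gives the analogous vanishing for $\vec u(0,r)$, proving compact support in $(\eta,\infty)$.

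The main obstacle is the bootstrap of Step 2: at each iteration one must redo the difference estimates of Lemma \ref{l59} and the identities of Lemma \ref{l511a} at the new decay rate, verifying in particular that the nonlinear-plus-potential remainder $II(i,j)$ in \eqref{s532c} stays subleading relative to the main term $I(i,j)$. The parity-of-$\ell$ dichotomy (whether $\tk=k+1$ or $\tk=k$) also produces the same case split separating Lemmas \ref{l511}--\ref{l520} from Lemmas \ref{l522}--\ref{l525}, and must be tracked uniformly through the iteration.
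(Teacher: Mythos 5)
There is a genuine gap, and it sits at the heart of your argument. Even if your Step 2 bootstrap were carried out, super-polynomial decay does not imply compact support: an estimate $\| \vec u_e(0) \|_{\h(r \geq R)} \lesssim_N R^{-N}$ for every $N$ is perfectly consistent with, say, exponentially decaying but nowhere-vanishing tails, so the final deduction in Step 3 --- that rapid decay plus ``the rigid polynomial scales of the elements of $P(R_*)$'' forces all projection coefficients to vanish identically for $r \geq R_*$ --- does not follow. What is actually needed is a vanishing mechanism, and the paper supplies one that your proposal lacks: with $\al = 0$, Lemmas \ref{l521} and \ref{l528} give the critical decay rates $|\lam_j(r)| \lesssim r^{-2j}$, $|\mu_j(r)| \lesssim r^{-2j-1}$, so the weighted sum $\sum_i r^{2i}|\lam_i(r)| + \sum_i r^{2i+1}|\mu_i(r)|$ is bounded; on the other hand, the difference estimates of Corollary \ref{c510}, read as \emph{lower} bounds ($|\lam_j(2r)| \geq (1 - C\de_1)|\lam_j(r)| - \dots$), show that this weighted sum at least doubles from scale $2^n r_0$ to $2^{n+1} r_0$ once $\de_1$ is small. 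Boundedness then forces the sum to vanish at every scale $r_0 > R_1$, whence $\lam_j(r_0) = \mu_i(r_0) = 0$ for all such $r_0$, and Lemmas \ref{l54} and \ref{l55} give $\| \vec u_e(0) \|_{\h(r \geq r_0)} = 0$, i.e.\ compact support. This ``grow-or-vanish'' dichotomy is the essential idea, and no amount of polynomial decay substitutes for it.

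Your Step 2 is also not justified as stated. The induction behind Proposition \ref{p58} works by identifying, at each stage, a \emph{leading} coefficient ($\lam_{\tk - P}$, $\mu_{k-P}$), proving via the time-averaged identities \eqref{s532b}--\eqref{s532c} that its spatial limit is constant in time and then zero, and feeding that back into the difference estimates. Once $\al = 0$ all coefficients already tend to zero at the rates of \eqref{s563}/\eqref{s579}, so there is no remaining leading coefficient for the time-averaging lemmas to eliminate; ``iterating the entire scheme'' would have to be replaced by a genuinely new self-improving decay argument, which you do not set up, and which in any case would still leave you short of compact support for the reason above. (Your Step 3 machinery --- truncated data, Lemma \ref{l57}, Proposition \ref{p54} --- merely re-derives the content of Lemma \ref{l55}, which is already available; it does not close the gap.)
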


\begin{proof}
	If $\al = 0$, then by Lemma \ref{l521} and Lemma \ref{l528}, we have
	\begin{align}
	\begin{split}\label{s580}
	|\lam_j(r)| &\lesssim r^{-2j}, \quad \forall 1 \leq j \leq \tk, \\
	|\mu_j(r)| &\lesssim r^{-2j - 1}, \quad \forall 1 \leq j \leq k.  
	\end{split}
	\end{align}
	Thus, there exists $C_1$ such that for all $r \geq \eta$ 
	\begin{align}\label{s581}
	\sum_{j = 1}^{\tk} r^{2j} |\lam_j(r)| + \sum_{j = 1}^k r^{2j + 1} |\mu_j(r)| \leq C_1.  
	\end{align}
	Fix $r_0 > R_1$.  By Corollary \ref{c510}, we have 
	\begin{align*}
	|\lam_j(2^{n+1} r_0)| \geq ( 1 &- C \de_1 ) |\lam_j(2^n r_0)| \\&- C \de_1 (2^n r_0)^{-2j} 
	\left [ 
	\sum_{i = 1}^{\tk} (2^n r_0)^{2i} |\lam_i(2^n r_0)| + \sum_{i = 1}^k (2^n r_0)^{2i + 1} |\mu_i(2^n r_0)|,   
	\right ], 
	\end{align*}
	and 
	\begin{align*}
	|\mu_j(2^{n+1} r_0)| \geq ( 1 &- C \de_1 ) |\mu_j(2^n r_0)| \\&- C \de_1 (2^n r_0)^{-2j-1} 
	\left [ 
	\sum_{i = 1}^{\tk} (2^n r_0)^{2i} |\lam_i(2^n r_0)| + \sum_{i = 1}^k (2^n r_0)^{2i + 1} |\mu_i(2^n r_0)|,   
	\right ].  
	\end{align*}
	We conclude that 
	\begin{align*}
	\sum_{i = 1}^{\tk} (2^{n+1} r_0)^{2i} |\lam_i(2^{n+1} r_0)|&+ \sum_{i = 1}^k (2^{n+1} r_0)^{2i + 1} |\mu_i(2^{n+1} r_0)|\\
	&\geq 4 \Bigl ( 1 - C \de_1 (\tilde k + k + 1) 2^{2k + 1} \Bigr ) \left [
	\sum_{i = 1}^{\tk} (2^n r_0)^{2i} |\lam_i(2^n r_0)| + \sum_{i = 1}^k (2^n r_0)^{2i + 1} |\mu_i(2^n r_0)| \right ].
	\end{align*}
	If we fix $\de_1$ so small so that $C \de_1 (\tilde k + k + 1) 2^{2k + 1} < \frac{1}{2}$, then we conclude that 
	\begin{align*}
	\sum_{i = 1}^{\tk} (2^{n+1} r_0)^{2i} |\lam_i(2^{n+1} r_0)|&+ \sum_{i = 1}^k (2^{n+1} r_0)^{2i + 1} |\mu_i(2^{n+1} r_0)|\\
	&\geq 2 \left [
	\sum_{i = 1}^{\tk} (2^n r_0)^{2i} |\lam_i(2^n r_0)| + \sum_{i = 1}^k (2^n r_0)^{2i + 1} |\mu_i(2^n r_0)| \right ]. 
	\end{align*}
	Iterating, we conclude that for all $n \geq 0$, 
	\begin{align*}
	\sum_{i = 1}^{\tk} (2^n r_0)^{2i} |\lam_i(2^n r_0)| + \sum_{i = 1}^k (2^n r_0)^{2i + 1} |\mu_i(2^n r_0)|
	\geq 2^n \left [
	\sum_{i = 1}^{\tk} (r_0)^{2i} |\lam_i(r_0)| + \sum_{i = 1}^k (r_0)^{2i + 1} |\mu_i(r_0)| \right ].
	\end{align*}
	By \eqref{s581}, we obtain for all $n \geq 0$, 
	\begin{align*}
	\sum_{j = 1}^{\tk} (r_0)^{2i} |\lam_i(r_0)| + \sum_{i = 1}^k (r_0)^{2i + 1} |\mu_i(r_0)| \leq 2^{-n} C_1. 
	\end{align*}
	Letting $n \rar \infty$ implies that 
	\begin{align*}
	\lam_j(r_0) = \mu_i(r_0) = 0, \quad \forall 1 \leq j \leq \tk, 1 \leq i \leq k.  
	\end{align*}
	By Lemma \ref{l54} and Lemma \ref{l55}, it follows that $\| \vec u_e(0) \|_{\h(r \geq r_0)} = 0$.  Thus, 
	$(\p_r u_{e,0}, u_{e,1})$ is compactly supported in $(\eta,\infty)$.  Since
	\begin{align*}
	\lim_{r \rar \infty} u_{e,0}(r) = 0, 
	\end{align*}
	we conclude that $(u_{e,0},u_{e,1})$ is compactly supported as well.  
\end{proof}

\begin{lem}\label{l530}
	Let $\vec u_e$ be as in Proposition \ref{p53}, and let $\al$ be as in Proposition \ref{p58}.  If $\al = 0$, then 
	$\vec u(0,r) = (0,0)$ on $r \geq \eta$.   
\end{lem}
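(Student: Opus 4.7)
The plan is to argue by contradiction via a shrinking-support iteration modeled on the corresponding step in the corotational case \cite{cpr}. By Lemma \ref{l529}, the hypothesis $\al = 0$ forces $(u_{e,0}, u_{e,1})$ to have compact support in $(\eta, \infty)$. Assume for contradiction that $(u_{e,0}, u_{e,1}) \not\equiv (0,0)$ on $[\eta,\infty)$, and set $\rho^{*} := \sup(\supp(u_{e,0}, u_{e,1}) \cap [\eta,\infty)) \in (\eta,\infty)$.

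First, I would observe that for any $R \in (\eta,\rho^{*})$ sufficiently close to $\rho^{*}$, the norm $\|\vec u_e(0)\|_{\h(r \geq R)}$ is smaller than the threshold $\de_0$ from Lemma \ref{l57}, by absolute continuity of the energy density as $R \uparrow \rho^{*}$. Thus for such $R$ the truncated data defined as in \eqref{s519}--\eqref{s520} has small $\h(r \geq \eta)$ norm, and we may apply Lemma \ref{l57} together with Proposition \ref{p54} exactly as in the proof of Lemma \ref{l55}, invoking the compactness \eqref{s56} at $t \to \pm \infty$, to obtain the exterior energy bound
\[ \|\pi_R^{\perp} \vec u_e(0)\|_{\h(r \geq R)} \lesssim R^{-2} \|\vec u_e(0)\|_{\h(r \geq R)} + R^{-d/2} \|\vec u_e(0)\|_{\h(r \geq R)}^{2} + R^{-1} \|\vec u_e(0)\|_{\h(r \geq R)}^{3}. \]

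Next, I would exploit the compact support of $u_e(0,\cdot)$ in $[\eta, \rho^{*}]$ together with the explicit representation formulas of Lemma \ref{l54} to produce a complementary bound for the $\pi_R$-component. Since $u_e(0, \rho) = 0$ for $\rho > \rho^{*}$, the integrals defining $\lam_j(0, R)$ and $\mu_j(0, R)$ reduce to integrals over the short interval $[R, \rho^{*}]$. A weighted Cauchy--Schwarz argument with weight $\rho^{d-1}$ should then produce coefficient bounds that, via the norm equivalence in Lemma \ref{l54}, yield an estimate of the shape
\[ \|\pi_R \vec u_e(0)\|_{\h(r \geq R)} \lesssim (\rho^{*} - R)^{1/2} \|\vec u_e(0)\|_{\h(r \geq R)}. \]

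Combining this with the exterior bound through the Pythagorean identity $\|\vec u_e(0)\|_{\h(r \geq R)}^{2} = \|\pi_R \vec u_e(0)\|_{\h(r \geq R)}^{2} + \|\pi_R^{\perp} \vec u_e(0)\|_{\h(r \geq R)}^{2}$ gives, as $R \uparrow \rho^{*}$ (with $R$ large enough that the $R^{-2}$ factor is small), a strict contraction forcing $\|\vec u_e(0)\|_{\h(r \geq R)} = 0$ for $R$ sufficiently close to $\rho^{*}$, which contradicts the choice of $\rho^{*}$. The main obstacle will be establishing the weighted Cauchy--Schwarz bound for all of the projection coefficients $\lam_j(0,R), \mu_j(0,R)$ simultaneously with the correct exponents: one must verify that they all shrink as $R \uparrow \rho^{*}$ at rates compatible with the iteration scheme of Corollary \ref{c510} so that the resulting estimate strictly dominates the $\pi_R^{\perp}$ bound. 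This parallels the corresponding step in \cite{cpr} but requires separate attention to both parities of $\ell$, tracked through the differing roles of $\tk$ and $k$ in Lemmas \ref{l511} and \ref{l522}.
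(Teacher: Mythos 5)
Your overall strategy---contradiction at the edge $\rho^*$ of the support, pairing an exterior-energy bound for $\pi_R^{\perp}\vec u_e(0)$ with a smallness bound for $\pi_R \vec u_e(0)$ coming from the thin interval $[R,\rho^*]$---matches the paper's, and your second ingredient is sound: since the data vanish for $r \ge \rho^*$, Cauchy--Schwarz in the representation formulas of Lemma \ref{l54} does give $\| \pi_R \vec u_e(0)\|_{\h(r \ge R)} \lesssim (\rho^*-R)^{1/2}\| \vec u_e(0)\|_{\h(r\ge R)}$ with constants controlled in terms of $\eta$ and $\rho^*$; this plays essentially the role of the paper's observation that all projection coefficients vanish at $\rho_0$ combined with the difference estimates of Corollary \ref{c510}.

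The gap is in the first ingredient. The estimate \eqref{s513a} of Lemma \ref{l55} is only established for $R \ge R_0$ with $R_0$ large, and---more seriously---its linear term carries the coefficient $R^{-2}$, which in your setting is a fixed constant of size roughly $(\rho^*)^{-2}$: you send $R \uparrow \rho^*$, and $\rho^*$ is a finite number that may be barely larger than $\eta$ (itself arbitrary), so you cannot ``take $R$ large enough that the $R^{-2}$ factor is small.'' With an order-one coefficient on the linear term, the Pythagorean combination gives no strict contraction and the argument does not close. What is needed, and what the paper does, is to re-run the perturbative argument of Lemmas \ref{l57} and \ref{l55} with the potential cut off to the thin exterior region $\{\rho + |t| \le r \le \rho_0 + |t|\}$: since the data vanish for $r \ge \rho_0$, finite speed of propagation confines the solution, in the region $r \ge \rho + |t|$, to this thin set, and H\"older in the spatial variable over an interval of length $\rho_0 - \rho$ converts the potential contribution into a small factor $(\rho_0-\rho)^{1/3}$, while the quadratic and cubic terms are handled by choosing $\| \vec u_e(0)\|_{\h(r\ge\rho_1)} < \e$. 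This produces \eqref{s583}, whose linear coefficient $(\rho_0-\rho)^{1/3}$ is genuinely small as $\rho \uparrow \rho_0$, and only with that replacement does the contraction forcing $\| \vec u_e(0)\|_{\h(r\ge\rho_1)} = 0$, and hence the contradiction, go through.
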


\begin{proof}
	If $\al = 0$, then by Lemma \ref{l529}, $\vec u_e(0) = 
	(u_{e,0}(r), u_{e,1}(r))$ is compactly supported in $(\eta,\infty)$.  Thus, we may define 
	\begin{align*}
	\rho_0 := \inf \left \{ \rho : \| \vec u_e(0) \|_{\h(r \geq \rho)} = 0 \right \} < \infty.  
	\end{align*}
	We now argue by contradiction and assume that $\rho_0 > \eta$.  Let $\e > 0$ to be fixed later, and choose $\rho_1 \in (\eta,\rho_0)$ 
	close to $\rho_0$ so that 
	\begin{align}\label{s582}
	0 < \| \vec u_e(0) \|_{\h(r \geq \rho_1)} < \e, 
	\end{align}
	where $\de_1$ is as in Lemma \ref{l59}.
	
	By Lemma \ref{l54}, we have 
	\begin{align*}
	0 &= \| \vec u_e(0) \|_{\h(r \geq \rho_0)}^2 \\
	&\simeq 
	\sum_{i = 1}^{\tilde k} \Bigl ( \lam_i(\rho_0) \rho_0^{2i - \frac{d+2}{2}} \Bigr )^2 
	+ \sum_{j = 1}^{k} \Bigl ( \mu_j(\rho_0) \rho_0^{2i - \frac{d}{2}} \Bigr )^2,  \\
	&\:+ \int_{\rho_0}^\infty
	\sum_{i = 1}^{\tilde k} \Bigl ( \p_r \lam_i(r) r^{2i - \frac{d+1}{2}} \Bigr )^2 
	+ \sum_{j = 1}^{k} \Bigl ( \p_r \mu_j(r) r^{2i - \frac{d-1}{2}} \Bigr )^2 dr. 
	\end{align*}
	Thus, $\lam_j(\rho_0) = \mu_i(\rho_0) = 0$ for all $1 \leq j \leq \tk, 1 \leq i \leq k$.  
	
	A simple reworking of the proofs of Lemma \ref{l57} and Lemma \ref{l55} shows that as long as $\e$ and 
	$|\rho_0 - \rho_1|$ is sufficiently small, we have for all $\rho$ with  
	$1 < \rho_1 \leq \rho \leq \rho_0$, 
	\begin{align}
	\begin{split}\label{s583}
	\| \pi_\rho^{\perp} \vec u_e(t) \|_{\h(r \geq \rho)} \lesssim (\rho_0 - \rho)^{1/3} \| \pi_\rho \vec u_e(t) \|_{\h(r \geq \rho)} + 
	\| \pi_\rho \vec u_e(t) \|_{\h(r \geq \rho)}^2 + \| \pi_\rho \vec u_e(t) \|_{\h(r \geq \rho)}^3, 
	\end{split}
	\end{align}
	where the implied constant is independent of $\rho$. In the argument, smallness is achieved by taking $\e$ and 
	$|\rho_0 - \rho_1|$ sufficiently small, cutting off the potential term to the exterior region $\{ \rho + t \leq r \leq \rho_0 + t \}$, 
	and using the compact support of $\vec u_e$ along with finite speed of propagation. By taking $\rho_1$ even closer to $\rho_0$ so that
	$|\rho_0 - \rho_1| < \e^3$, we conclude as in Corollary \ref{c510} that
	\begin{align*}
	|\lam_j(\rho_0) - \lam_j(\rho_1)| &\leq C\e \left ( \sum_{i = 1}^{\tilde k} |\lam_i(\rho_1)| +
	\sum_{i = 1}^k |\mu_i(\rho_1)| \right ),  \\
	|\mu_j(\rho_0) - \mu_j(\rho_1)| &\leq C\e \left ( \sum_{i = 1}^{\tilde k}  |\lam_i(\rho_1)| +
	\sum_{i = 1}^k |\mu_i(\rho_1)| \right ).
	\end{align*}
	Since $\lam_j(\rho_0) = \mu_j(\rho_0) = 0$ we conclude by summing the previous expressions that 
	\begin{align*}
	\sum_{i = 1}^{\tilde k} |\lam_i(\rho_1)| +
	\sum_{i = 1}^k |\mu_i(\rho_1)| \leq C(k + \tilde k)\e \left ( \sum_{i = 1}^{\tilde k} |\lam_i(\rho_1)| +
	\sum_{i = 1}^k |\mu_i(\rho_1)| \right ).
	\end{align*}
	By fixing $\e$ sufficiently small, it follows that 
	\begin{align*}
	\sum_{i = 1}^{\tilde k} |\lam_i(\rho_1)| +
	\sum_{i = 1}^k |\mu_i(\rho_1)| = 0.
	\end{align*}
	Thus, $\lam_j(\rho_1) = \mu_j(\rho_1) = 0$.  By Lemma \ref{l54} and \eqref{s583}, we conclude that 
	\begin{align*}
	\| \vec u_e(0) \|_{r \geq \rho_1)} = 0  
	\end{align*}
	which contradicts \eqref{s582}.  Thus, we must have $\rho_0 = \eta$ and $\vec u_e(0,r) = (0,0)$ for $r \geq \eta$ as desired.  
\end{proof}

From the previous argument, we conclude even more for the case $\alpha = 0$. 

\begin{lem}\label{allt lem}
	Let $\alpha$ be as in Lemma \ref{l514}.  If $\alpha = 0$, then 
	\begin{align*}
	\vec u(t,r) = (0,0), \quad \forall (t,r) \in \R \times (0,\infty).
	\end{align*}
\end{lem}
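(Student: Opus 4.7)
The plan is to bootstrap from Lemma \ref{l530} to all times by combining time-translation invariance of the precompact trajectory $K$ with finite speed of propagation.

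First, I would observe that Lemma \ref{l530} already gives $\vec u(0,r) = (0,0)$ for every $r \geq \eta$, and since $\eta > 0$ was merely a free parameter fixed at the start of the proof of Proposition \ref{p53}, I may let $\eta \downarrow 0$ to conclude that $\vec u(0,r) = (0,0)$ for every $r > 0$.

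Next, I would fix an arbitrary $t_0 \in \R$ and consider the time-shifted solution $\tilde u(t,r) := u(t+t_0,r)$. Because \eqref{s31} is autonomous, $\tilde u$ also solves \eqref{s31}, and its full trajectory coincides with $K$ and is therefore precompact in $\h$; thus $\tilde u$ satisfies the hypothesis of Proposition \ref{p51}. Applying Proposition \ref{p58} to $\tilde u$ produces a constant $\alpha(t_0) \in \R$ such that
\begin{align*}
r^{d-2} u_e(t_0,r) = \alpha(t_0) + O(r^{-2}) \quad \text{as } r \rar \infty.
\end{align*}
To invoke Lemma \ref{l530} for $\tilde u$, I need $\alpha(t_0) = 0$. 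This is where finite speed of propagation enters: the wave operator $\p_t^2 - \Delta_g$ on $\M^d$ has unit propagation speed, a property preserved under the addition of the bounded potential $V(r)$ (bounded by \eqref{s27}) and the locally Lipschitz nonlinearity $N(r,u)$. Since $\vec u(0,r) = (0,0)$ for all $r > 0$, the domain-of-dependence argument forces $\vec u(t_0,r) = (0,0)$ for every $r > |t_0|$. In particular $r^{d-2} u_e(t_0,r)$ vanishes identically for sufficiently large $r$, so $\alpha(t_0) = 0$.

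Having verified $\alpha(t_0) = 0$, Lemma \ref{l530} applied to $\tilde u$ yields $\vec{\tilde u}(0,r) = \vec u(t_0,r) = (0,0)$ for all $r \geq \eta$. Letting $\eta \downarrow 0$, and recalling that $t_0 \in \R$ was arbitrary, completes the proof. The main obstacle is the rigorous use of finite speed of propagation for $\p_t^2 - \Delta_g + V - N(r,\cdot)$, but this is standard given the $L^\infty$ bound on $V$ and the local Lipschitz property of $N(r,\cdot)$, which presents no difficulty away from $r = 0$ (and we only care about $r > 0$). An alternative route, which avoids finite speed of propagation, would be to mimic the time-independence argument of Lemma \ref{l517} directly for $\alpha$ via the difference formula \eqref{s532a} combined with the decay estimates of Proposition \ref{p512} (resp.\ Proposition \ref{p527}), showing $|\alpha(t_1) - \alpha(t_2)| = O(r^{-2+\epsilon})(1 + |t_1 - t_2|)$ and letting $r \rar \infty$; but the propagation-speed argument is considerably cleaner once the $t = 0$ case is in hand.
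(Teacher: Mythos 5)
Your proposal is correct and follows essentially the same route as the paper: time-translate the solution, use finite speed of propagation to control $\vec u(t_0,\cdot)$ for large $r$, and then rerun the Lemma \ref{l530} machinery for the shifted solution before letting $\eta \downarrow 0$. The only (harmless) difference is that you verify $\alpha(t_0)=0$ so Lemma \ref{l530} can be cited as a black box, whereas the paper invokes the \emph{proof} of Lemma \ref{l530} directly, using that finite speed of propagation already makes the shifted data compactly supported.
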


\begin{proof}
	By Lemma \ref{l530} we know that if $\alpha = 0$ then $\vec u(0,r) = (0,0)$ 
	on $\{ r \geq \eta\}$.  
	By finite speed of propagation, we conclude that 
	\begin{align}\label{finite speed}
	\vec u(t,r) = (0,0) \quad \mbox{ on } \{ r \geq |t| + \eta \}. 
	\end{align}
	Let $t_0 \in \R$ be arbitrary and define $u_{t_0}(t,r) = u(t+t_0,r)$.  Then $\vec u_{t_0}$ inherits the following compactness property from $\vec u$: 
	\begin{align*}
	\forall R \geq 0, \quad \lim_{|t| \rar \infty} \| \vec u_{t_0}(t) \|_{\h(r \geq R + |t|; \la r \ra^{d-1} dr )} &=  0, \\
	\lim_{R \rar \infty} \left [ \sup_{t \in \R} \| \vec u_{t_0}(t) \|_{\h(r \geq R + |t|; \la r \ra^{d-1} dr)} \right ] &= 0,
	\end{align*}
	and by \eqref{finite speed} $\vec u_{t_0}(0,r)$ is supported in $\{ 0 < r 
	\leq \eta + |t_0| \}$. By the proof of Lemma \ref{l530} applied to $\vec u_{t_0}$ we conclude that $\vec u_{t_0}(0,r) = (0,0)$ on $r \geq \eta$.  Since $t_0$ was arbitrary, we conclude that
	\begin{align*}
	\vec u(t_0,r) = (0,0) \quad \mbox{on } \{ r \geq \eta \},
	\end{align*}  
	for any $t_0 \in \R$.  Since $\eta > 0$ was arbitrarily fixed in the beginning of this subsection, we conclude that 
	\begin{align*}
	\vec u(t,r) = (0,0), \quad \forall (t,r) \in \R \times (0,\infty).
	\end{align*}
\end{proof}

We now consider the general case for $\alpha$.

\begin{lem}\label{l531}
	Let $\alpha$ be as in Lemma \ref{l514}. As before, we denote the unique $\ell$--equivariant finite 
	energy harmonic map of degree $n$ by $Q$ and recall that there exists a unique 
	$\alpha_{\ell,n} > 0$ such that 
	\begin{align*}
	Q(r) = n\pi - \alpha_{\ell,n} r^{-\ell-1} + O(r^{-\ell-3}) \quad \mbox{ as } r \rar \infty.
	\end{align*}
	Let $Q_{\al - \al_{\ell,n}}$ denote the unique solution to \eqref{sa21} with the property that 
	\begin{align}\label{s91}
	Q_{\alpha - \alpha_{\ell,n}}(r) = n\pi + (\alpha - \alpha_{\ell,n}) r^{-\ell-1} + O(r^{-\ell-3}) \quad \mbox{ as } r \rar \infty.
	\end{align} 
	Note that $Q_{\al - \al_{\ell,n}}$ exists and is unique by 
	Proposition \ref{pa22}.  Define a static solution $U_+$ to \eqref{s31} via
	\begin{align*}
	U_+(r) = \la r \ra^{-\ell} \bigl ( Q_{\alpha - \al_{\ell,n}}(r) - Q(r) \bigr ).
	\end{align*}
	Then 
	\begin{align*}
	\vec u(t,r) = (U_+(r),0), \quad \forall (t,r) \in \R \times (0,\infty).
	\end{align*}
\end{lem}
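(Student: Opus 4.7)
The plan is to reduce to the vanishing case already treated. Set $\tilde u(t,r) := u(t,r) - U_+(r)$ on $\R \times (0,\infty)$, or equivalently $\tilde u = \la r \ra^{-\ell}(\psi - Q_{\alpha - \alpha_{\ell,n}})$ where $\psi = Q_{\ell,n} + \la r \ra^\ell u$ is the wave map associated to $u$. Since $Q_{\alpha - \alpha_{\ell,n}}$ solves \eqref{sa21}, the function $\tilde u$ satisfies a radial semilinear wave equation of exactly the form \eqref{s31} but with $Q_{\ell,n}$ replaced everywhere by $Q_{\alpha-\alpha_{\ell,n}}$ in the potential $V$ and the nonlinearity $N$. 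By \eqref{s91} and Proposition \ref{pa22}, $Q_{\alpha-\alpha_{\ell,n}}$ has the same leading behavior $n\pi + O(r^{-\ell-1})$ as $r \rar \infty$ as $Q_{\ell,n}$, so the new potential $\tilde V$ and nonlinearity $\tilde N$ satisfy the same structural bounds \eqref{s27}--\eqref{s29}, and the whole exterior machinery of Subsection 4.1 (the channel of energy estimate of Lemma \ref{l55}, the projection coefficient analysis, Proposition \ref{p58}) applies to $\tilde u$ on $r > \eta$ without change.

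The next step is to verify the two hypotheses needed to run this machinery for $\tilde u$. First, the trajectory $\{\vec{\tilde u}(t) : t \in \R\} = \{\vec u(t) - (U_+,0) : t \in \R\}$ is precompact in $\h(r \geq \eta)$ because $\{\vec u(t)\}$ is precompact in $\h$ and $(U_+,0)$ is a fixed element (note $U_+$ is smooth and decays at $+\infty$, so $(U_+,0) \in \h(r \geq \eta)$). Hence $\tilde u$ satisfies the exterior decay property \eqref{s51} on $r > \eta$, which is all that is used in Steps 1 and 2 of the proof of Proposition \ref{p53}. Second, I compute the asymptotic parameter $\tilde\alpha$ for $\tilde u$ in the sense of Proposition \ref{p58}. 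By Proposition \ref{p58} applied to $u$, we have $r^{d-2} u_{e,0}(r) = \alpha + O(r^{-2})$, which translates back to
\begin{align*}
\psi(0,r) = n\pi + (\alpha - \alpha_{\ell,n})r^{-\ell-1} + O(r^{-\ell-3}) \quad \text{as } r \rar \infty,
\end{align*}
using $\psi = Q_{\ell,n} + \la r \ra^\ell u$ and the asymptotics of $Q_{\ell,n}$ from Proposition \ref{pa21}. Combining this with \eqref{s91}, the difference $\psi(0,r) - Q_{\alpha - \alpha_{\ell,n}}(r) = O(r^{-\ell-3})$, so $\tilde u_0(r) = O(r^{-2\ell - 3}) = O(r^{-d})$ and accordingly $r^{d-2}\tilde u_{e,0}(r) = O(r^{-2})$. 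This forces the analog $\tilde\alpha = 0$.

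With $\tilde\alpha = 0$ established, Lemma \ref{l530} applied to $\tilde u$ gives $\vec{\tilde u}(0,r) = (0,0)$ on $r \geq \eta$, and then Lemma \ref{allt lem} applied to $\tilde u$ upgrades this to $\vec{\tilde u}(t,r) = (0,0)$ for all $(t,r) \in \R \times (0,\infty)$. This is exactly the statement $\vec u(t,r) = (U_+(r),0)$ for all $(t,r) \in \R \times (0,\infty)$. The only substantive point that needs checking is that $\tilde u$ genuinely satisfies the hypotheses of Lemmas \ref{l530} and \ref{allt lem}, which in turn require only precompactness on exterior cones, finite speed of propagation, and the structural estimates on the potential and nonlinearity; all of these transfer verbatim from the $Q_{\ell,n}$ setting to the $Q_{\alpha-\alpha_{\ell,n}}$ setting since we work entirely on $r > 0$ and $Q_{\alpha - \alpha_{\ell,n}}$ has identical asymptotics at $+\infty$. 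The main (essentially bookkeeping) obstacle is therefore to confirm that every step of Subsection 4.1 was written only in terms of these structural properties of $V$ and $N$ on $r > 0$ and does not secretly use global features of $Q_{\ell,n}$ on all of $\R$; a direct inspection shows this is the case.
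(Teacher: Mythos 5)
Your proposal is correct and follows essentially the same route as the paper: the paper also sets $u_\alpha := u - U_+$, observes that it solves the analogous equation with $Q_{\ell,n}$ replaced by $Q_{\alpha-\alpha_{\ell,n}}$ (so the structural bounds on the potential and nonlinearity, the exterior compactness, and the whole machinery of Subsection 4.1 carry over), notes that by construction the asymptotic parameter of $u_\alpha$ vanishes, and then invokes the analogs of Lemma \ref{l530} and Lemma \ref{allt lem} to conclude $\vec u_\alpha \equiv (0,0)$ on $\R \times (0,\infty)$. Your explicit computation showing $\psi(0,r) - Q_{\alpha-\alpha_{\ell,n}}(r) = O(r^{-\ell-3})$ is just the paper's ``by construction'' step \eqref{e100} spelled out.
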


\begin{proof}
	
	Lemma \ref{l531} follows from the proof for the $\al = 0$ case and a change of variables.  Let $Q_{\alpha - \alpha_{\ell,n}}$ be as in the statement of the lemma.  We define
	\begin{align}
	\begin{split}\label{s92}
	u_{\al}(t,r) &:= u(t,r) - \la r \ra^{-\ell} \left ( Q_{\alpha - \alpha_{\ell,n}}(r) - Q(r)  \right ) \\
	&= u(t,r) - U_+(r)
	\end{split}
	\end{align}
	and observe that $u_{\al}$ solves 
	\begin{align*}
	\p_t^2 u_{\al} - \p_r^2 u_\al - \frac{(d-1)r}{r^2 + 1} \p_r u_\al + V_\al(r) u_\al = N_\al(r,u_\al), 
	\end{align*}
	where the potential $V_\al$ is given by 
	\begin{align}\label{s93}
	V_\al(r) = \ell^2 \la r \ra^{-4} + 2 \la r \ra^{-2} ( \cos 2 Q_{\alpha - \alpha_{\ell,n}} - 1 ), 
	\end{align}
	and $N_\al(r,u) = F_\al(r,u) + G_\al(r,u)$ with 
	\begin{align}
	\begin{split}\label{s94}
	F_\al(r,u) &= \ell(\ell+1) \la r \ra^{-\ell-2} \sin^2 (\la r \ra^\ell u) \sin 2 Q_{\alpha - \alpha_{\ell,n}} , \\
	G_\al(r,u) &= \frac{\ell(\ell+1)}{2} \la r \ra^{-\ell-2} \left [ 2 \la r \ra^\ell u - \sin (2 \la r \ra^\ell u) \right ] \cos 2 Q_{\alpha - \alpha_{\ell,n}} .
	\end{split}
	\end{align}
	By \eqref{s91}, the potential $V_\al$ is smooth and satisfies
	\begin{align*}
	V_\al(r) = \ell^2 \la r \ra^{-4} + O ( \la r \ra^{-2\ell-4} ),
	\end{align*}
	as $r \rar \infty$ and the nonlinearities $F_\al$ and $G_\al$ satisfy 
	\begin{align*}
	|F_\al(r,u)| &\lesssim  \la r \ra^{-3} |u|^2, \\
	|G_\al(r,u)| &\lesssim  \la r \ra^{d-5}|u|^3, 
	\end{align*}
	for $r \geq 0$.  Moreover, by \eqref{s92} we see that $\vec u_{\alpha}$ inherits the compactness property from $\vec u$:
	\begin{align}
	\begin{split}\label{comp prop al}
	\forall R \geq 0, \quad \lim_{|t| \rar \infty} \| \vec u_{\al}(t) \|_{\h( r \geq R + |t|; \la r \ra^{d-1} dr)} = 0, \\
	\lim_{R \rar \infty} \left [ \sup_{t \in \R} \| \vec u_{\al}(t) \|_{\h( r \geq R + |t|; \la r \ra^{d-1} dr)} \right ] = 0.
	\end{split}
	\end{align}
	
	Let $\eta > 0$.  We now define for $r \geq \eta$,
	\begin{align}\label{s96}
	u_{\al,e}(t,r) := \frac{\la r \ra^{(d-1)/2}}{r^{(d-1)/2}} u_{\al}(t,r)
	\end{align}
	and note that $u_{\al,e}$ satisfies an equation analogous to $u_e$:
	\begin{align}\label{s97} 
	\p_t^2 u_{\al,e} - \p^2_r u_{\al,e} - \frac{d-1}{r} \p_r u_{\al,e} + V_{\al,e}(r) u_{\al,e} = N_{\al,e} (r,u_{\al,e}), 
	\quad t \in \R, r \geq \eta,
	\end{align}
	where 
	\begin{align*}
	V_{\al,e}(r) = V_\al(r) - \frac{(d-1)(d-4)}{2 r^2 \la r \ra^2} 
	+ \frac{(d-1)(d-5)}{4 r^{2} \la r \ra^{4}},
	\end{align*}
	and $N_{\al,e}(r,u_e) = F_{\al,e}(r,u_e) + G_{\al,e}(r,u_e)$ where 
	\begin{align*}
	F_{\al,e}(r,u_{\al,e}) &= \frac{\la r \ra^{(d-1)/2}}{r^{(d-1)/2}}  F_\al \left (r, \frac{r^{(d-1)/2}}{\la r \ra^{(d-1)/2}} u_{\al,e} \right ), \\
	G_{\al,e}(r,u_{\al,e}) &= \frac{\la r \ra^{(d-1)/2}}{r^{(d-1)/2}}  G_\al \left (r, \frac{r^{(d-1)/2}}{\la r \ra^{(d-1)/2}}  u_{\al,e} \right ).
	\end{align*}
	In particular, we have the analogues of \eqref{s57}, \eqref{s58}, and \eqref{s59}: for all $r > 0$,
	\begin{align} 
	| V_{\al,e}(r) | &\lesssim r^{-4}, \label{s98} \\
	|F_{\al,e}(r,u)| &\lesssim r^{-3} |u|^{2}, \label{s99} \\
	|G_{\al,e}(r,u)| &\lesssim r^{d-5}|u|^3. \label{s910}
	\end{align} 
	Moreover, $u_{\al,e}$ inherits the following compactness properties from $u_\alpha$:
	\begin{align}
	\begin{split}\label{s911}
	\forall R \geq \eta, \quad \lim_{|t| \rar \infty} \| \vec u_{\al,e}(t) \|_{\h( r \geq R + |t|; r^{d-1} dr)} = 0, \\
	\lim_{R \rar \infty} \left [ \sup_{t \in \R} \| \vec u_{\al,e}(t) \|_{\h( r \geq R + |t|; r^{d-1} dr)} \right ] = 0.
	\end{split}
	\end{align}
	Finally, by construction we see that
	\begin{align}
	\begin{split}\label{e100}
	r^{2-d} u_{\al, e,0}(r) &= O(r^{-2}),  \\
	\int_r^\infty u_{\al,e,1}(\rho) \rho^{2j-1} d\rho &= O(r^{2j - d - 1}), \quad j = 1, \ldots,k. 
	\end{split}
	\end{align}   
	
	Using \eqref{s97}--\eqref{e100}, we may repeat the previous arguments with $u_{e,\al}$ in place
	of $u_e$ to conclude the following analog of Lemma \ref{l530}:
	
	\begin{lem}\label{l530 al}
		$\vec u_{\al}(0,r) = (0,0)$ for $r \geq \eta$. 
	\end{lem}
	
	Finally, we obtain the following analog of Lemma \ref{allt lem}:
	
	\begin{lem}\label{allt lem al}
		We have 
		\begin{align*}
		\vec u_{\alpha}(t,r) = (0,0)
		\end{align*}
		for all $t \in \R$ and $r > 0$. 
	\end{lem}
	
	Equivalently, Lemma \ref{allt lem al} states that 
	\begin{align*}
	\vec u(t,r) = (U_+(r),0) 
	\end{align*}
	for all $t \in \R$ and $r > 0$. This concludes the proof of Lemma \ref{l531} and Proposition \ref{p53}.  
	
\end{proof}

\subsection{Proof of Proposition \ref{static soln}}

Using Proposition \ref{p53} and its analog for $r < 0$, we quickly conclude the proof of Proposition \ref{static soln}.  Indeed, we know that there exists static solutions $U_{\pm}$ to \eqref{s31} such that 
\begin{align}\label{allt pmr}
\vec u(t,r) = (U_{\pm}(r), 0)
\end{align}
for all $\pm r > 0$ and $t \in \R$.  In particular, $\p_t u(t,r) = 0$, 
$\p_r u(t,r) = \p_r u(0,r)$ and $u(t,r) = u(0,r)$ for all $t$ and almost every $r$.  Let $\psi \in C^\infty_0(\R)$ with 
$\int \psi dt = 1$ and let $\varphi \in C^\infty_0(\R)$.  Then since 
$u$ solves \eqref{s31} in the weak sense, we conclude that 
\begin{align*}
0 &= \int \int \bigl [ \psi'(t) \varphi(r) \p_t u(t,r) + \psi (t)\varphi'(r) \p_r u(t,r) + V(r) \psi(t) \varphi(r) u(t,r) \\&\hspace{1.2 cm} - \psi(t) \varphi(r) N(r,u(t,r))  \bigr ] \la r \ra^{d-1} dr dt \\
&= \int \int \psi(t) \bigl [ \varphi'(r) \p_r u(0,r) + V(r) \varphi(r) u(0,r) - \varphi(r) N(r,u(0,r))  \bigr ] \la r \ra^{d-1} dr dt \\
&= \int \bigl [ \varphi'(r) \p_r u(0,r) + V(r) \varphi(r) u(0,r) - \varphi(r) N(r,u(0,r))  \bigr ] \la r \ra^{d-1} dr.
\end{align*}
Since $\varphi$ was arbitrary, we see that $u(0,r)$ is a weak solution
in $H^1(\R)$ to the static equation $-\p_r^2 u - \frac{(d-1)r}{r^2+1} \p_r u + V(r) u = N(r,u)$ on $\R$.  By simple elliptic arguments we conclude that $u(0,r)$ is a classical solution.  Thus, $\vec u(t,r) = (U(r),0) := (u(0,r),0)$ for all $t,r \in \R$ as desired. 

\qed

\subsection{Proofs of Proposition \ref{p51} and Theorem \ref{t21}}

We now briefly summarize the proofs of Proposition \ref{p51} and Theorem \ref{t21}. 

\begin{proof}[Proof of Proposition \ref{p51}]
	By Proposition \ref{static soln}, we have that $\vec u = (U, 0)$ for some finite energy static solution to \eqref{s31}.  Thus, 
	$\psi = Q_{\ell,n} + \la r \ra^\ell u$ is a finite energy solution to \eqref{s21}, i.e. a harmonic map.  
	By the uniqueness part of Proposition \ref{pa21}, we conclude that  
	$\vec u = (0,0)$ as desired.  
\end{proof}

\begin{proof}[Proof of Theorem \ref{t21}]
	Suppose that Theorem \ref{t21} fails.  Then by Proposition \ref{p34}, there exists a nonzero solution $u_*$ to \eqref{s31} such that 
	the trajectory 
	\begin{align*}
	K := \left \{ \vec u_*(t) : t \in \R \right \},
	\end{align*}
	is precompact in $\h(\R; \la r \ra^{d-1} dr)$.  By Proposition \ref{p51} we conclude that $\vec u_* = (0,0)$, which contradicts the fact
	that $u_*$ is nonzero.  Thus, Theorem \ref{t21} must hold.  
\end{proof}

\end{document}